\newtheorem{theorem}{Theorem}[section]
\newtheorem{cor}{Corollary}[section]
\newtheorem{pr}{Problem}
\newtheorem{lemma}{Lemma}[section]
\newtheorem{rem}{Remark}[section]
\newcommand\blfootnote[1]{%
  \begingroup
  \renewcommand\thefootnote{}\footnote{#1}%
  \addtocounter{footnote}{-1}%
  \endgroup
}
\begin{document}
\title{}

\markboth{Nguyen Tran Huu Thinh}  {From two simple problems to the connection of special points}

\begin{minipage}[0.8 \textheight]{0.8 \textwidth}
 \begin{flushleft}
 \tiny{\normalsize INTERNATIONAL JOURNAL OF GEOMETRY}\\
 \end{flushleft}
 \tiny{\large Vol. 3 (2013), No. 1, 5 - 14}
\end{minipage}
\includegraphics[height=1.0 cm]{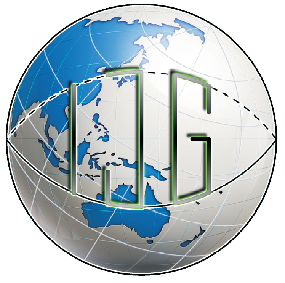} \centerline{}\bigskip

\centerline{}\bigskip

\centerline {\Large{\bf FROM TWO SIMPLE PROBLEMS TO }}\centerline{\Large{\bf  THE CONNECTION OF SPECIAL POINTS}}%
\centerline {\Large{\bf }}

\bigskip

\begin{center}
{\large NGUYEN TRAN HUU THINH}

\centerline{}
\end{center}

\bigskip

\textbf{Abstract.} Both the USA TST 2008 \cite{usatst2008} and the ELMO Shortlist 2013 \cite{elmoshortlist2013} suggested two issues that are connected to fixed points. These problems provide a strong linkage between the various attributes of specific points in a triangle. In this article, we will first investigate various theorems concerning the fixed points that have been presented, and then we will demonstrate how those points are connected to a few triangle centers.\blfootnote{\textbf{Keywords and phrases:} Fixed point, Triangle, Brocard point, Hagge circle} \blfootnote{\textbf{(2020)Mathematics Subject Classification: }51M04, 51-08} \blfootnote{Received: 1.01.2013. \ In revised form: 13.06.2013. \ Accepted:
10.09.2013.}
\bigskip

\section{Introduction}
The fixed-point problems have been seen in a great number of international mathematical olympiad competitions \cite{fix01}. The generalization of these problems in this area, as well as the features of these problems, leave behind a plethora of notable results \cite{nvlfix}. In this work, we talk about some of those problems and show the relationship to a few of different kinds of unique triangle centers.

In USA Team Selection Test 2008 \cite{usatst2008}, there is a fixed-point problem as following:
\begin{pr}
\label{problem_usatst2008}
Given triangle $ABC$, centroid $G$. Let $P$ be a variable point on segment $BC$. Let $Q, R$ be points on $AC, AB$ such that $PQ \parallel AB, PR \parallel AC$. Prove that $(AQR)$ passes through a fixed point $X$ such that $(AB, AG)=(AX, AC)$.
\end{pr}

\begin{center}
\begin{figure}[htbp]
\includegraphics[scale=.5]{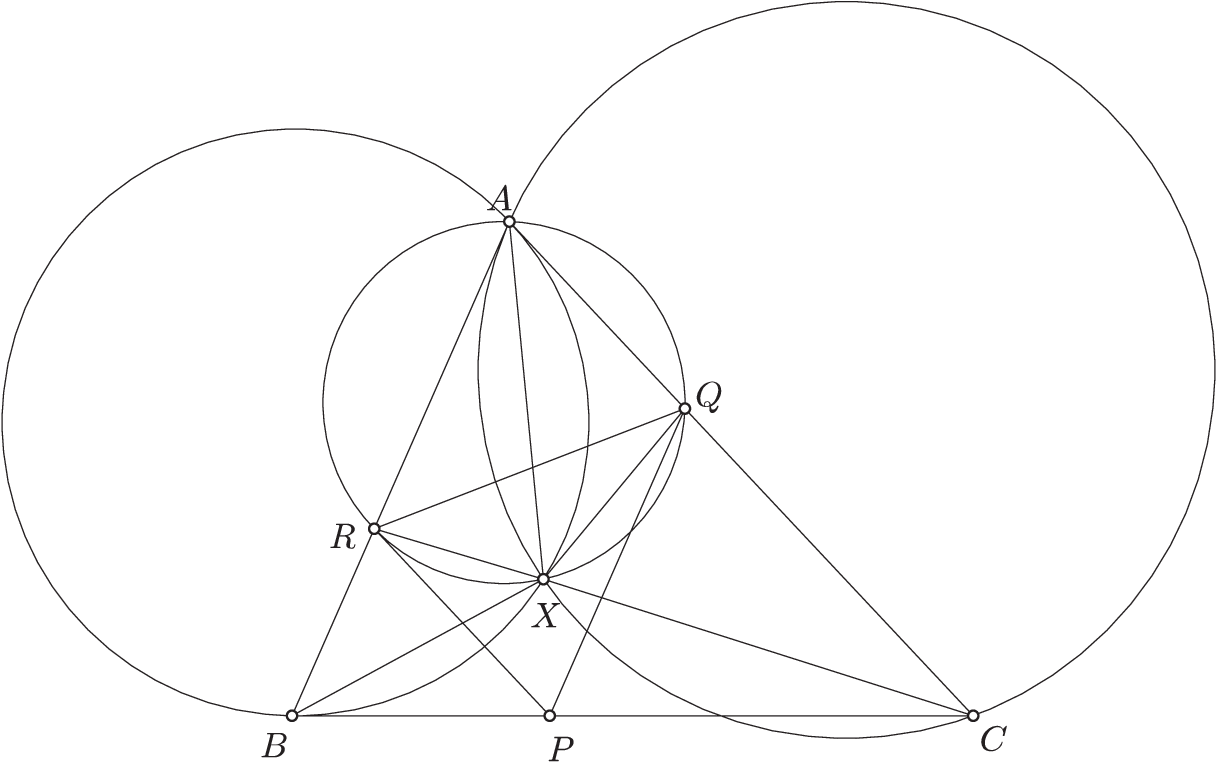}
\caption{Proof of Problem \ref{problem_usatst2008}}
\end{figure}
\end{center}

\begin{proof}
Let $\omega_1$ be the circle which passes $A, B$ and tangents to $AC$. Let $\omega_2$ be the circle which passes $A, C$ and tangents to $AB$. Let $X$ be intersection of $\omega_1$ and $\omega_2$, so $X$ is fixed. Since two triangle $ABX$ and $AXC$ are homothetic, hence $\frac{AX}{BX}=\frac{AC}{AB}$. On the other hand, $\frac{AC}{AB}=\frac{PR}{BR}=\frac{AQ}{BR}$ so $\frac{AX}{BX}=\frac{AQ}{BR}$, also $(BR, BX)=(AX, AX)$ hence two triangle $BRX$ and $AQX$ are homothetic, therefore $(RB, RX)=(QA, QX)$, following that $A, Q, X, R$ are cyclic. Moreover, we have

$$\frac{\sin (AB, AX)}{\sin (CA, CX)}=\frac{\sin (AB, AX)}{\sin (BA, BX)}=\frac{BX}{AX}=\frac{AB}{AC}$$
so $AX$ is the symmedian line of triangle $ABC$, which means $(AB, AG)=(AX, AC)$. The problem has been solved.
\end{proof}

With this configuration, ELMO Shortlist 2013 \cite{elmoshortlist2013} provide the following problem using the antiparallel lines instead of parallel ones:

\begin{pr}
\label{problem_elmo}
Given triangle $ABC$ and a centroid $G$. Let $P$ be a variable point on segment $BC$. Let $Q, R$ respectively be points on $AC, AB$ such that $PQ, PR$ are antiparallel lines of $AB, AC$. Prove that $(AQR)$ passes through a fixed point.
\end{pr}

\begin{center}
\begin{figure}[htbp]
\includegraphics[scale=.5]{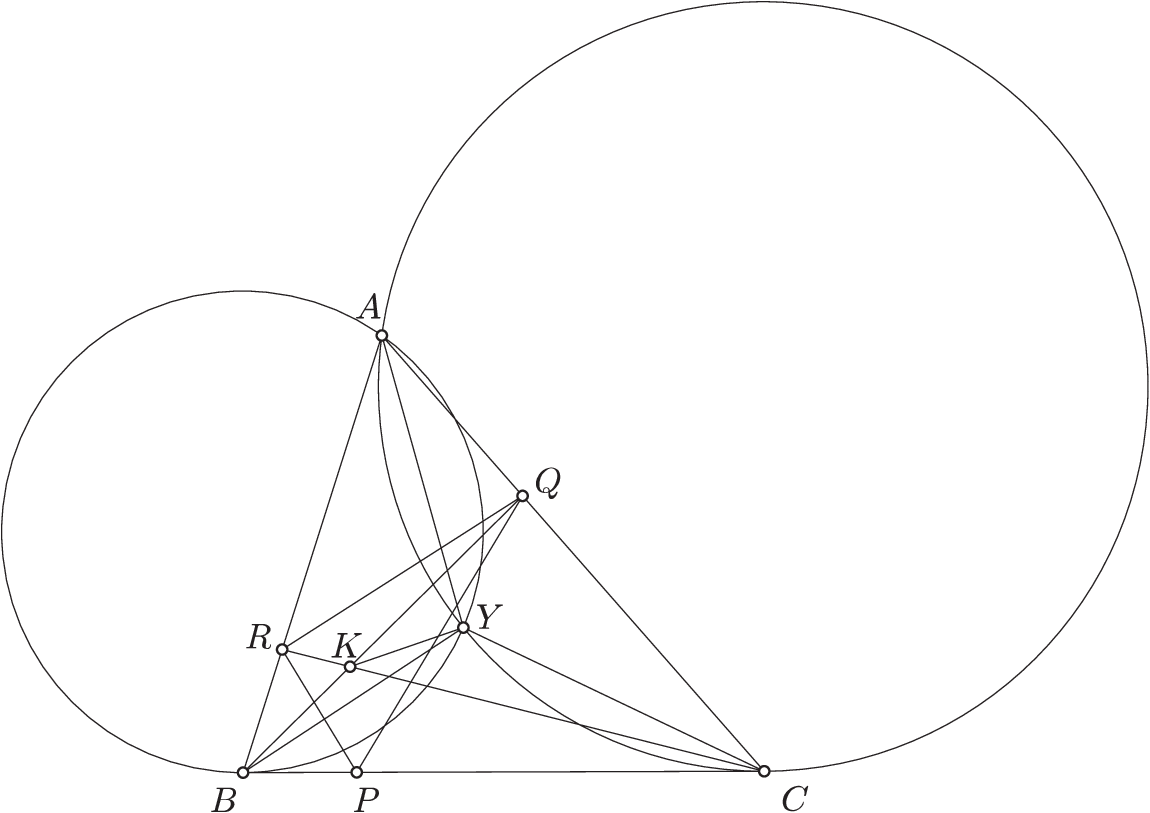}
\caption{Proof of Problem \ref{problem_elmo}}
\end{figure}
\end{center}

\begin{proof}
Let $\gamma_1$ be the circles passes $A, B$ and tangents to $BC$. Let $\gamma_2$ be the circle passes $A, C$ and tangents to $BC$. Let $Y$ be intersection of $\gamma_1$ and $\gamma_2$, so $Y$ is fixed. Let $K$ be intersection of $BQ$ and $CR$. We hav
$$(RK, RA)=(PC, PA)=(PB, PA)=(QK, QA) \text{ (mod } \pi).$$
Hence four points $A, R, K, Q$ are cyclic. Therefore:
$$\begin{aligned} (KB, KC)=(KR, KQ)=(AR, AQ) & =(AR, AY)+(AY, AQ) \\ & =(BY, BC)+(CB, CY) \\ & =(YB, YC) \text{ (mod } \pi). \end{aligned}$$
So four points $B, K, Y, C$ are cyclic. We have
$$(KR, KY)=(KY, KC)=(BY, BC)=(AR, AY) \text{ (mod } \pi).$$
Hence $Y$ is on $(ARK) \equiv (AQR)$. Moreover, by using the power of point's properties, we have the radical axis of $\gamma_1$ and $\gamma_2$ passes their common tangents, following that $AY$ passes $BC$ at its midpoint or $AY$ is the median line of triangle $ABC$.
\end{proof}
The subsequent sections will go into the various qualities associated with the aforementioned pair of fixed points, which are closely linked to several notable triangle centers.

\section{Properties related to the first problem}
Two above problems appeared in many contests \cite{usatst2008, elmoshortlist2013} and journals \cite{fix01} in the world, but it seems like there are just few properties about. This article will put forward some results about two fixed points mentioned above. We denotes $A_X, A_Y$ be that two points in the previous problems.
\begin{theorem}
\label{theorem_AX_project}
$A_X$ is projection point of the circumcenter of triangle $ABC$ on the $A-$symmedian line of that triangle.
\end{theorem}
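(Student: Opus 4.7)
The strategy is to identify $A_X$ with a more familiar object: the midpoint of the chord cut by the $A$-symmedian from the circumcircle. From Problem~\ref{problem_usatst2008} we already know that $A_X$ lies on the $A$-symmedian of $\triangle ABC$. Let $K$ denote the second intersection of the $A$-symmedian with the circumcircle $(ABC)$. Since $OA=OK=R$, the foot of the perpendicular from $O$ onto the chord $AK$ is precisely the midpoint of $AK$, so the entire theorem reduces to proving that $A_X$ is the midpoint of segment $AK$.

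To establish this midpoint identification, I would invoke the inversion centered at $A$ with arbitrary radius $r$, sending each point $P\neq A$ to its image $P^{*}$. The circle $\omega_1$, being tangent to $AC$ at $A$, is mapped to a line parallel to $AC$; since $B\in\omega_1$, this line passes through $B^{*}$. Symmetrically $\omega_2$ is mapped to the line through $C^{*}$ parallel to $AB$. Consequently $A_X^{*}$ is the fourth vertex of the parallelogram whose other three vertices are $A$, $B^{*}$, $C^{*}$; vectorially, taking $A$ as the origin, $A_X^{*}=B^{*}+C^{*}$. The circumcircle $(ABC)$, which passes through the inversion center $A$, is sent to the line $B^{*}C^{*}$, and so $K^{*}$ is the intersection of this line with the $A$-symmedian (a line invariant under the inversion because it passes through $A$).

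The crucial observation now is that $\triangle AB^{*}C^{*}$ is inversely similar to $\triangle ACB$, a standard consequence of inversion coming from $AB^{*}\cdot AB=r^2=AC^{*}\cdot AC$ together with the shared angle at $A$. Hence the line $B^{*}C^{*}$ is antiparallel to $BC$ with respect to angle $A$. Combined with the classical property that the $A$-symmedian bisects every antiparallel to $BC$, this forces $K^{*}$ to be the midpoint of $B^{*}C^{*}$, which equals $(B^{*}+C^{*})/2=A_X^{*}/2$, i.e., the midpoint of segment $AA_X^{*}$. Unwinding the inversion through $AA_X\cdot AA_X^{*}=r^2=AK\cdot AK^{*}$ yields $AK=2\,AA_X$, so $A_X$ is the midpoint of $AK$, and the theorem follows.

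The main obstacle is spotting the reformulation: until one sees that the foot of $O$ onto the symmedian must be the midpoint of the symmedian chord, the problem looks disconnected from the circles $\omega_1, \omega_2$. Once this bridge is in place, the tangency conditions defining $\omega_1$ and $\omega_2$ make inversion at $A$ the natural tool, and the antiparallel--symmedian lemma closes the argument. A fully synthetic alternative is available using the tangent-chord identities $\angle BAA_X=\angle ACA_X$ and $\angle CAA_X=\angle ABA_X$ already exploited in Problem~\ref{problem_usatst2008}, combined with the inscribed-angle theorem and the law of sines in $\triangle ABA_X$ and $\triangle ABK$ (the computation boils down to the identity $\sin(A+C)=\sin B$), but it is less conceptual than the inversion approach.
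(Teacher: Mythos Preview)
Your proof is correct, and it shares with the paper the same opening reduction: the projection of $O$ onto the $A$-symmedian is the midpoint of the chord $AK$ (the paper writes $A_S$ for your $K$), so the task is to show $A_X$ is that midpoint. From there the two arguments diverge. The paper stays synthetic: it recalls from Problem~\ref{problem_usatst2008} that $\triangle ABA_X\sim\triangle A A_X C$, giving $A_XA^{2}=A_XB\cdot A_XC$, and then does a short angle chase to show $\triangle A_XBA_S\sim\triangle A_X A_S C$, whence $A_XA_S^{2}=A_XB\cdot A_XC$ as well, so $A_XA=A_XA_S$. Your route instead inverts at $A$, sending $\omega_1,\omega_2$ to the pair of lines completing the parallelogram on $A,B^{*},C^{*}$, and then invokes the classical fact that the $A$-symmedian bisects every antiparallel to $BC$; unwinding the inversion recovers $AK=2\,AA_X$. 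The paper's argument is shorter and uses nothing beyond the similarities already set up in Problem~\ref{problem_usatst2008}, while your inversion is more conceptual: it explains the midpoint phenomenon as the image of the antiparallel-bisection property, and in passing identifies $A_X^{*}$ as the parallelogram vertex $B^{*}+C^{*}$, a description that could be reused elsewhere.
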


\begin{center}
\begin{figure}[htbp]
\includegraphics[scale=.5]{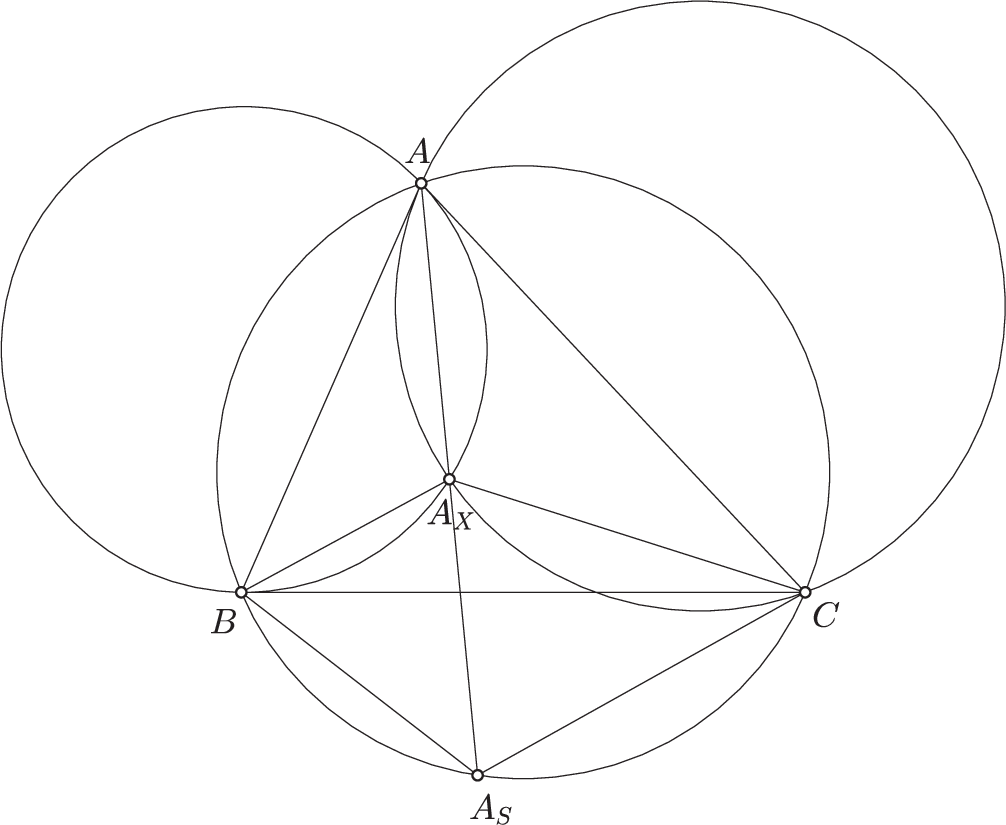}
\caption{Proof of Theorem \ref{theorem_AX_project}}
\end{figure}
\end{center}

\begin{proof}
Let $A_S$ be the second intersection of $AA_X$ and $(ABC)$. From Problem \ref{problem_usatst2008}, we have two triangle $ABA_X$ and $AA_XC$ are homothetic hence $A_XA^2=A_XB\cdot A_XC$. Moreover, we have
$$(A_XB, A_XA_S)=(A_XB, AA_X)=(AA_X, A_XC)=(A_XA_S, A_XC),$$
$$\begin{aligned} (A_SB, A_SA_X)=(CB, CA) & =(CB, CA_X)+(CA_X, CA) \\ & =(CB, CA_X)+(AA_S, AB) \\ & =(CB, CA_X)+(CA_S, CB) \\ & =(CA_S, CA_X). \end{aligned}$$
So two triangle $A_XBA_S$ and $A_XA_SC$ are homothetic. Therefore $A_XA_S^2=A_XB\cdot A_XC$. Hence $A_XA=A_XA_S$ or $A_X$ is midpoint of $AA_S$, following that $A_X$ is projection point of the circumcenter of triangle $ABC$ on the $A$-symmedian line of that triangle.
\end{proof}
\begin{theorem}
\label{theorem_2nd_brocard}
Brocard circle \cite{brocardcircle} is the circle which passes two Brocard points $Z_1, Z_2$ \cite{1stbrocard, 2ndbrocard}, Lemoine point $L$ \cite{lemoine} and circumcenter $O$ of triangle $ABC$. Let $A_X$ be the fixed point mentioned in Problem \ref{problem_usatst2008}. The construction of $B_X$ and $C_X$ is performed in a similar manner, but with the utilization of vertices $B$ and $C$. Then $A_X$, $B_X$ and $C_X$ lie on Brocard circle \cite{brocardcircle} of triangle $ABC$.
\end{theorem}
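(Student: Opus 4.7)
The plan is to reduce the claim to Theorem \ref{theorem_AX_project} together with one classical fact: the Brocard circle is precisely the circle on diameter $OL$, where $O$ is the circumcenter and $L$ the Lemoine (symmedian) point. Granting this characterization of the Brocard circle, a point $P$ lies on it if and only if $\angle OPL = 90^{\circ}$, so the whole statement becomes a right-angle chase.

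First I would recall, and briefly justify via the defining property of $L$ (the pole of the Lemoine axis, equivalently the point with equal distances to the sides weighted by the sides themselves), that $L$ lies on all three symmedians $AA_X$, $BB_X$, $CC_X$. Then I would invoke Theorem \ref{theorem_AX_project}: $A_X$ is the foot of the perpendicular from $O$ onto the $A$-symmedian. Since that symmedian passes through $L$, this foot satisfies $OA_X \perp A_X L$, i.e.\ $\angle OA_X L = 90^{\circ}$. Hence $A_X$ lies on the circle with diameter $OL$, which is the Brocard circle. The construction of $B_X$ and $C_X$ is entirely symmetric (Theorem \ref{theorem_AX_project} applies verbatim after permuting the roles of $A,B,C$), so the same argument places them on the same circle.

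The only genuine obstacle is justifying the equation (Brocard circle) $=$ (circle on diameter $OL$), which the theorem's statement phrases instead through $\Omega_1, \Omega_2, L, O$. If I wanted to stay self-contained, I would argue as follows: the given definition already lists $O$ and $L$ as two of the four concyclic points, and it is a standard computation (using $\angle O\Omega_1 L = 90^{\circ}$, which follows from the fact that the pedal triangle of $\Omega_1$ is similar to $ABC$ and that $\Omega_1$ lies on the circle through $O$ perpendicular to $OL$) that $\angle O \Omega_1 L = \angle O \Omega_2 L = 90^{\circ}$. Thus $OL$ is a diameter of the circle through $O, L, \Omega_1, \Omega_2$, and any point $P$ with $\angle OPL = 90^{\circ}$ lies on this same circle. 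With this lemma in hand, the three concyclicities drop out immediately from Theorem \ref{theorem_AX_project} and its cyclic analogues.
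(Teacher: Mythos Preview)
Your argument is correct and shares its core with the paper's: both pivot on Theorem~\ref{theorem_AX_project}, which gives $OA_X \perp AL$ and hence $\angle OA_X L = 90^\circ$, placing $A_X$ (and by symmetry $B_X$, $C_X$) on the circle with diameter $OL$. The paper packages this right-angle observation as the separate Lemma~\ref{lemma_XYZ} (midpoints of cevian chords through $P$ lie on the circle with diameter $OP$), applied with $P = L$.

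Where the two diverge is in identifying that circle with the Brocard circle. You invoke the classical fact that $OL$ is a diameter of the Brocard circle, sketching a justification via $\angle O\Omega_1 L = 90^\circ$; your sketch here is a bit thin and slightly circular as written (you assert $\Omega_1$ lies on a circle perpendicular to $OL$ through $O$ while trying to prove exactly that), but the fact itself is standard and easily sourced. The paper instead proves this identification from scratch: it applies Mannheim's theorem (Lemma~\ref{lemma_mannheim}) to the three defining circles of $A_X,B_X,C_X$, with the cevians $AA_X$, $BB_X$, $CC_X$ concurrent at $L$, to conclude that $L$ and the Miquel point $Z_1$ both lie on $(A_XB_XC_X)$; a symmetric application yields $Z_2$. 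Thus the paper verifies directly that all four of $O,L,Z_1,Z_2$ sit on $(A_XB_XC_X)$, rather than quoting the diameter characterization. Your route is shorter if one is willing to cite the $OL$-diameter fact; the paper's route is more self-contained and, as a bonus, gives an independent proof that the Brocard points lie on the circle $(OL)$.
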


\begin{rem}
$A_XB_XC_X$ is the second Brocard triangle of triangle $ABC$.
\end{rem}

\begin{lemma}
\label{lemma_mannheim}
{\bf (Mannheim's theorem)} Given triangle $ABC$ and three points $L$, $M$, $N$ respectively be on $BC$, $CA$, $AB$. Let $A'$, $B'$, $C'$ respectively be three points that lie on $(AMN)$, $(BNL)$, $(CLM)$ such that $AA'$, $BB'$, $CC'$ are concurrent at $K$. Prove that four points $A'$, $B'$, $C'$, $K$ are cyclic.
\end{lemma}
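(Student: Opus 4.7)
\noindent The plan is to introduce the Miquel point $P$ of the three given circles and show that the five points $K, P, A', B', C'$ all lie on one common circle. By the classical Miquel theorem, the circles $\omega_A = (AMN)$, $\omega_B = (BNL)$, $\omega_C = (CLM)$ share a common point $P$; hence $A, A', M, N, P$ lie on $\omega_A$, while $B, B', L, N, P$ lie on $\omega_B$ and $C, C', L, M, P$ lie on $\omega_C$.

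Next I would compute the directed angle $(PA', PB')$ by inserting the auxiliary line $PN$. Since $A, A', N, P \in \omega_A$, with $K$ on line $AA'$ and $N$ on line $AB$,
$$(PA', PN) = (AA', AN) = (AK, AB) \text{ (mod } \pi).$$
Similarly, from $B, B', N, P \in \omega_B$ with $K$ on $BB'$ and $N$ on $BA$,
$$(PN, PB') = (BN, BB') = (BA, BK) \text{ (mod } \pi).$$
Adding these two identities and applying the directed-angle relation $(AK, AB) + (BA, BK) = (KA, KB)$ in $\triangle KAB$ gives
$$(PA', PB') = (KA, KB) = (KA', KB') \text{ (mod } \pi),$$
the last equality holding because $A'$, $B'$ lie on lines $KA$, $KB$ respectively. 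This already shows that $K, P, A', B'$ are concyclic.

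The same argument routed through $L$ (using $\omega_B$ and $\omega_C$) gives $K, P, B', C'$ concyclic, and routed through $M$ (using $\omega_A$ and $\omega_C$) gives $K, P, A', C'$ concyclic. The three resulting circles all contain the two points $K$ and $P$; assuming $K \neq P$, any two of them that share a further third point (such as $B'$) must coincide, forcing $K, P, A', B', C'$ to lie on a single circle. The degenerate situation $K = P$ can be dismissed directly: in that case the angle chase above already yields $(PA', PB') = (PA, PB) = 0$-style degeneracies that reduce the claim to a triviality.

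The main obstacle is recognizing that one must bring in an \emph{auxiliary} point — the Miquel point $P$ — to act as a common vertex from which a symmetric angle chase can be launched; the three cevian-defined circles $\omega_A, \omega_B, \omega_C$ by themselves do not offer a clean angle to compare with $(KA', KB')$. Once $P$ is on the table, the rest is a routine bookkeeping exercise in directed angles modulo $\pi$, using one cyclic quadrilateral from each of the two relevant circles and the triangle angle-sum in $\triangle KAB$ (and its cyclic analogues for $\triangle KBC$, $\triangle KCA$).
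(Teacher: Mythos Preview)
Your proof is correct and shares the paper's key idea: introduce the Miquel point $P$ of $(AMN)$, $(BNL)$, $(CLM)$ and show that $K,P,A',B'$ (and cyclically) are concyclic. The execution of the angle chase differs slightly. The paper defines an auxiliary point $Q = PB' \cap (AMN)$, invokes Reim's theorem on $(AMN)$ and $(BNL)$ to get $AQ \parallel BB'$, and then reads off $(KA,KB') = (AA',AQ) = (PA',PB')$ in one stroke. You instead route through the second common point $N$ of $\omega_A$ and $\omega_B$, splitting $(PA',PB')$ at $PN$ and reassembling via the directed-angle sum in $\triangle KAB$. Your route is a touch more elementary (no Reim, no extra intersection point beyond what is already in the figure), while the paper's route compresses the computation into a single equality once Reim is granted; neither has a real advantage in length or difficulty. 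Your closing remark about merging the three circles through the common chord $KP$ is fine; the degenerate case $K=P$ is not treated in the paper either and is safely ignored for the purposes of this lemma.
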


\begin{center}
\begin{figure}[htbp]
\includegraphics[scale=.5]{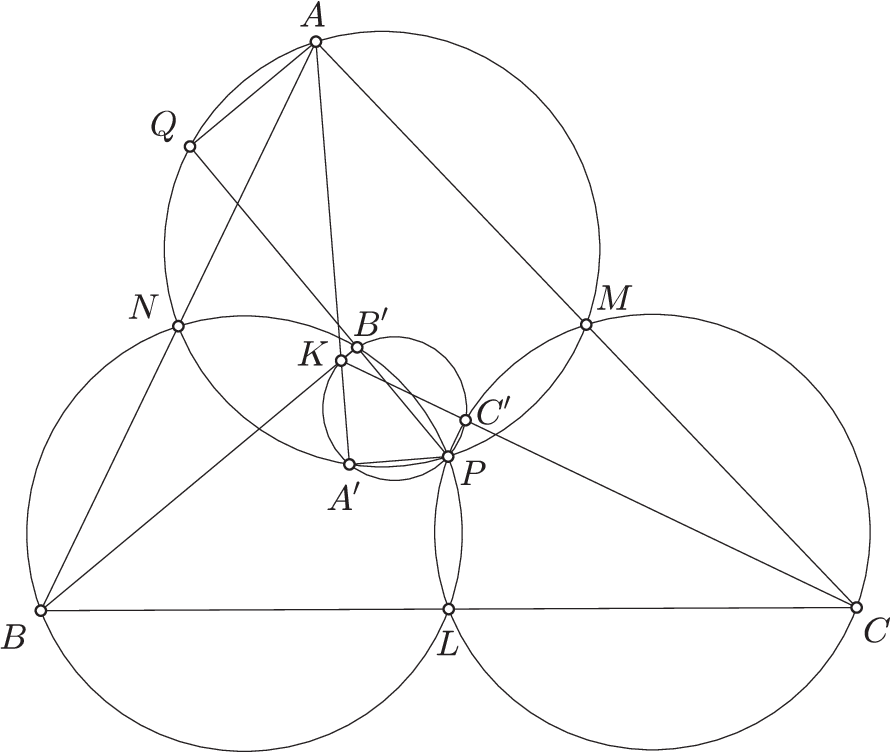}
\caption{Proof of Lemma \ref{lemma_mannheim}}
\end{figure}
\end{center}

\begin{proof}
Let $P$ be Miquel point of triangle $LMN$ with respect to triangle $ABC$. Let $Q$ be intersection of $PB'$ and $(AMN)$. Applying Reim's theorem to $(AMN)$ and $(BNL)$ we have $AQ \parallel BB'$. Through angle chasing:
$$(KA, KB')=(AA', AQ)=(PA', PB') \text{ (mod } \pi)$$
Therefore $A', P, B', K$ are cyclic. Similarly, $B', P, C', K$ are cyclic. Hence four points $A'$, $B'$, $C'$, $K$ are cyclic.
\end{proof}

\begin{lemma}
\label{lemma_XYZ}
Given triangle $ABC$ inscribed in $(O)$ and a point $P$ is not this triangle's vertex. Lines $AP$, $BP$, $CP$ respectively cut $(ABC)$ at $S$, $T$, $U$. Let $X$, $Y$, $Z$ be midpoints of segments $AS$, $BT$, $CU$, respectively. Prove that $X$, $Y$, $Z$ lie on $(OP)$ (a circle with center $O$ and radius $OP$).
\end{lemma}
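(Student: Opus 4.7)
The plan is to show that each of the three midpoints sees the segment $OP$ under a right angle, which forces all three to lie on the circle with diameter $OP$. (The parenthetical clarification in the statement appears to be a misprint: the circle $(OP)$ through $X,Y,Z$ cannot have $OP$ as its radius — it must be the circle with diameter $OP$, which is exactly what the right-angle condition produces.)

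First I would invoke the elementary fact that for any chord of a circle, the line joining the center to the midpoint of that chord is perpendicular to the chord. Applied to the chord $AS$ of $(O)$ with midpoint $X$, this gives $OX \perp AS$. Since $P$ lies on the line $AS$ by the very construction of $S$ (the second intersection of line $AP$ with $(ABC)$), the perpendicularity can be rewritten as $OX \perp XP$, i.e.\ $\ma OXP = 90^\circ$.

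Applying the same argument to the chords $BT$ (midpoint $Y$, with $P$ on line $BT$) and $CU$ (midpoint $Z$, with $P$ on line $CU$) yields $\ma OYP = \ma OZP = 90^\circ$. Thus $X$, $Y$, $Z$ all lie on the locus of points from which $OP$ subtends a right angle, which is precisely the circle with diameter $OP$, completing the proof.

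There is essentially no real obstacle here — the lemma follows from a single classical observation about chord midpoints, applied three times. The only delicate point is notational: one must read $(OP)$ as the circle with diameter $OP$ (the Thales circle on $OP$), which is clearly the intended object since the argument gives right angles at $X,Y,Z$, not equal distances to $O$.
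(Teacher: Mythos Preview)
Your proof is correct and is essentially identical to the paper's own argument: both observe that the midpoint of a chord is the foot of the perpendicular from the center, giving $(XP,XO)=(YP,YO)=(ZP,ZO)=\frac{\pi}{2}$, and conclude that $X,Y,Z$ lie on the circle with diameter $OP$. Your reading of the parenthetical as a misprint is also confirmed by the paper's proof, which uses exactly the Thales-circle interpretation of $(OP)$.
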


\begin{proof}
We have $X$, $Y$,$ Z$ respectively be projection of $O$ on $AS$, $BT$, $CU$. Through angle chasing:
$$(XP, XO)=(YP, YO)=(ZP, ZO)=\frac{\pi}{2}.$$
Therefore $X, Y, Z$ lie on $(OP)$. The lemma has been proved.
\end{proof}

Back to Theorem \ref{theorem_2nd_brocard},

\begin{center}
\begin{figure}[htbp]
\includegraphics[scale=.5]{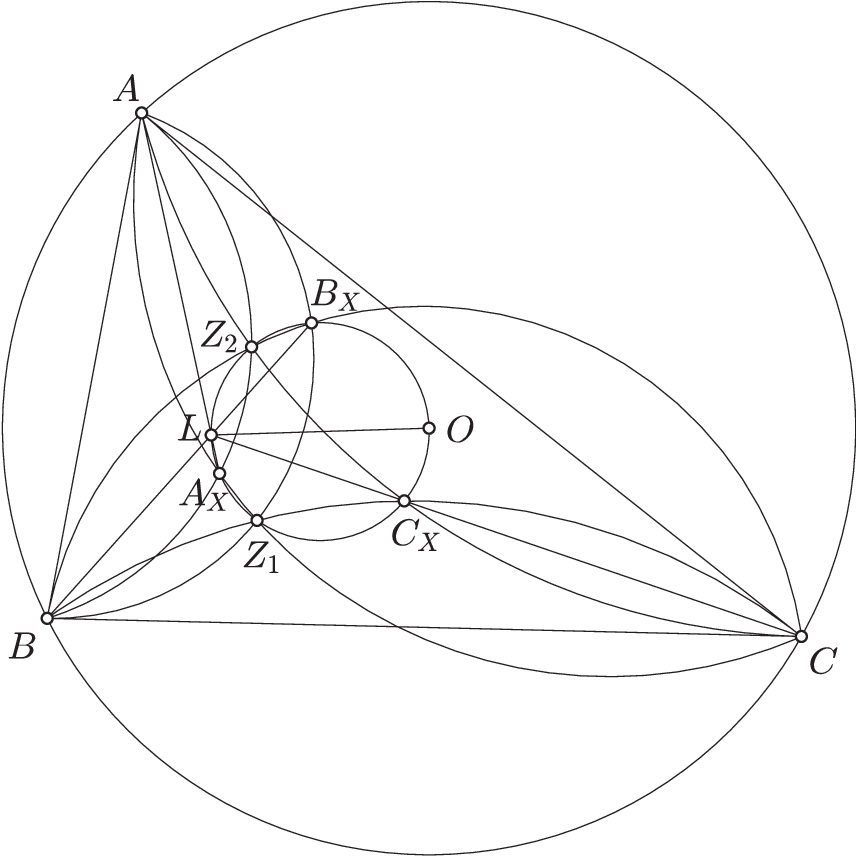}
\caption{Proof of Theorem \ref{theorem_2nd_brocard}}
\end{figure}
\end{center}

\begin{proof}
Applying Lemma \ref{lemma_mannheim}, to triangle $ABC$ with $AA_X, BB_X, CC_X$ are concurrent at $L$ and $Z_1$ is intersection of $(ABB_X), (BCC_X), (CAA_X)$ we have $L, Z_1$ lie on $(A_XB_XC_X)$. Similarly, $Z_2$ lies on $(A_XB_XC_X)$. From Lemma \ref{lemma_XYZ}, we have $A_X, B_X, C_X$ lie on $OL$. The theorem has been proved.
\end{proof}

\begin{theorem}
\label{theorem_AAY}
Ray $AA_Y$ cuts $(ABC)$ at $A_M$. Prove that $BA_YCA_M$ is parallelogram.
\end{theorem}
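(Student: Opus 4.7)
The plan is to show that $M_a$, the midpoint of $BC$, is also the midpoint of the segment $A_YA_M$. Since the two diagonals of the quadrilateral $BA_YCA_M$ are precisely $BC$ and $A_YA_M$, a common midpoint $M_a$ immediately yields by the diagonal characterization that $BA_YCA_M$ is a parallelogram.

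First, I would recall from the proof of Problem \ref{problem_elmo} that the line $AA_Y$ already passes through $M_a$, so the four points $A$, $A_Y$, $M_a$, $A_M$ are collinear along the $A$-median. Next, I would evaluate the power of $M_a$ with respect to two circles. With respect to $\gamma_1$ (tangent to $BC$ at $B$ and passing through both $A$ and $A_Y$), the tangent from $M_a$ has length $M_aB$, so
$$\overline{M_aA}\cdot \overline{M_aA_Y} \;=\; M_aB^2.$$
With respect to the circumcircle $(ABC)$, computing the power using the chord $BC$ and using the chord $AA_M$ gives
$$\overline{M_aA}\cdot \overline{M_aA_M} \;=\; \overline{M_aB}\cdot \overline{M_aC} \;=\; -M_aB^2,$$
the sign being negative because $B$ and $C$ lie on opposite sides of $M_a$ along line $BC$.

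Dividing the two equalities yields $\overline{M_aA_Y} = -\overline{M_aA_M}$ as signed lengths along the median, i.e.\ $A_Y$ and $A_M$ lie on opposite sides of $M_a$ at equal distances. Hence $M_a$ is the midpoint of $A_YA_M$, and combined with $M_a$ being the midpoint of $BC$, the quadrilateral $BA_YCA_M$ is a parallelogram.

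The main bookkeeping hurdle is the sign pattern in the two power-of-a-point computations: one needs to confirm that $A_Y$ lies on the same side of $M_a$ as $A$ (so that $\overline{M_aA}\cdot \overline{M_aA_Y}$ is positive, consistent with $A_Y$ being the ``$A$-Humpty'' type point strictly between $A$ and $M_a$), while $A_M$ lies on the opposite side of $M_a$ from $A$. Once these signs are pinned down, the two equations force the desired reflection of $A_Y$ across $M_a$ onto $A_M$, and the rest is immediate.
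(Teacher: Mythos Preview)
Your argument is correct and takes a genuinely different route from the paper. The paper proves the parallelogram by showing opposite sides are parallel: from the tangent--chord angle in $\gamma_1$ one has $(BA_Y,BC)=(AB,AA_Y)$, and since $A,A_Y,A_M$ are collinear this equals $(AB,AA_M)=(CB,CA_M)$ by the inscribed angle in $(ABC)$, whence $BA_Y\parallel CA_M$; the other pair is symmetric. You instead prove the diagonals bisect each other, comparing the power of $M_a$ in $\gamma_1$ (which is $M_aB^2$ because $BC$ is tangent at $B$) with its power in $(ABC)$ (which is $-M_aB^2$ because $M_a$ is the midpoint of the chord $BC$). Your approach is slick and arguably more robust, since the single division immediately pins down $M_a$ as the midpoint of $A_YA_M$; the paper's angle chase is slightly shorter but relies on reading off the tangent--chord relation correctly on both sides.

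One remark on your closing paragraph: the sign bookkeeping you worry about is in fact automatic. The signed power formula $\overline{M_aA}\cdot\overline{M_aA_Y}=\mathrm{pow}_{\gamma_1}(M_a)$ holds regardless of where $A_Y$ sits on the line, and the positivity of $M_aB^2$ already forces $A$ and $A_Y$ to lie on the same side of $M_a$, while the negativity of $\overline{M_aB}\cdot\overline{M_aC}$ forces $A$ and $A_M$ to lie on opposite sides. So you do not need any separate geometric verification that $A_Y$ is between $A$ and $M_a$; the two power computations alone give $\overline{M_aA_Y}=-\,\overline{M_aA_M}$ outright.
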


\begin{center}
\begin{figure}[htbp]
\includegraphics[scale=.5]{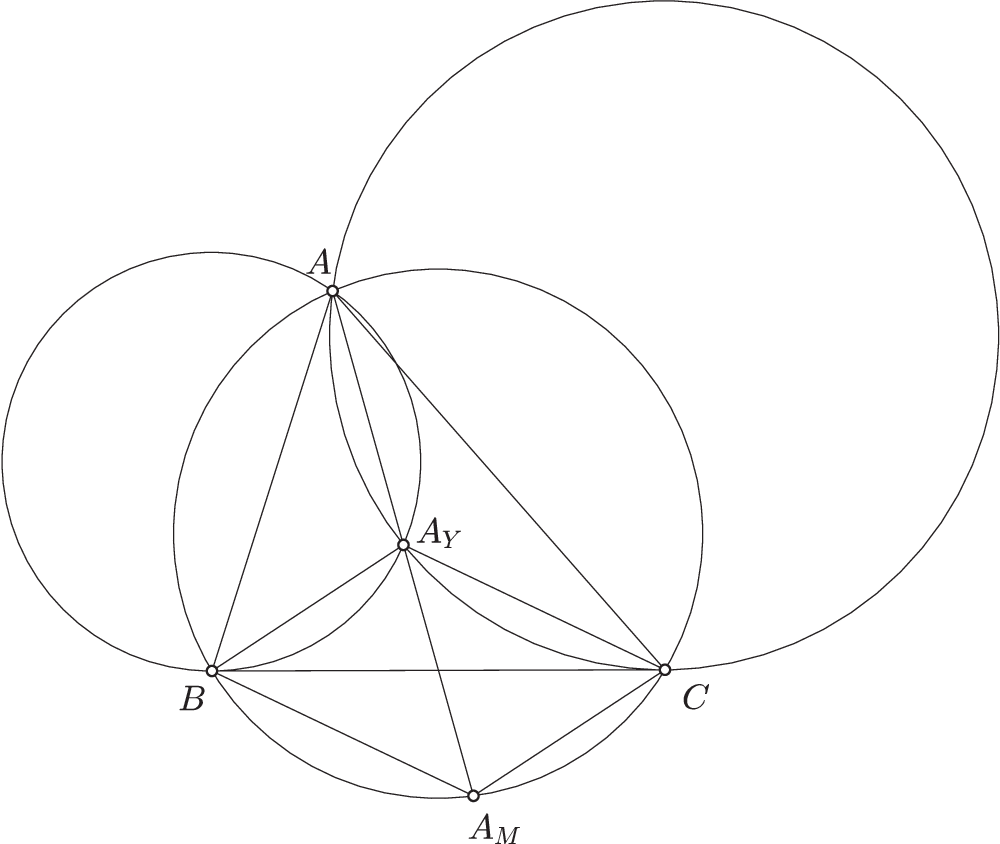}
\caption{Proof of Theorem \ref{theorem_AAY}}
\end{figure}
\end{center}

\begin{proof}
Using the properties of $A_Y$ at Problem \ref{problem_elmo}, we have
$$(BA_Y, BC)=(AB, AA_M)=(CB, CA_M).$$
Therefore $A_YB \parallel A_MC$. Similarly, $A_YC \parallel A_MC$ so $BA_YCA_M$ is a parallelogram. The problem has been solved.
\end{proof}

\begin{cor}
\label{cor_AY_sym_AS}
$A_Y$ is symmetric with $A_S$ through $BC$.
\end{cor}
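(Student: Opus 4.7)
The plan is to show that the reflection $A_S'$ of $A_S$ across line $BC$ coincides with $A_Y$ by verifying that $B$, $A_Y$, $A_S'$ are collinear and $C$, $A_Y$, $A_S'$ are collinear, via a short directed-angle chase that links the symmedian $AA_S$ to the median $AA_M$ and then to the sides of the parallelogram furnished by Theorem \ref{theorem_AAY}.

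I would first set up the two main inputs. By Theorem \ref{theorem_AX_project}, line $AA_S$ is the $A$-symmedian of triangle $ABC$; by Problem \ref{problem_elmo}, line $AA_Y$ is the $A$-median, so $A_M$ lies on the $A$-median. Since the median and the symmedian from $A$ are isogonal, $\angle(AA_S, AC) = \angle(AB, AA_M) \pmod{\pi}$. Two applications of the inscribed angle theorem in $(ABC)$, to the concyclic quadruples $\{A,B,A_S,C\}$ and $\{A,B,A_M,C\}$, then chain into
\[
\angle(BA_S, BC) \;=\; \angle(AA_S, AC) \;=\; \angle(AB, AA_M) \;=\; \angle(CB, CA_M) \pmod{\pi}.
\]

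Reflection across $BC$ reverses the directed angle with $BC$, so writing $A_S'$ for the reflection of $A_S$ in $BC$,
\[
\angle(BA_S', BC) \;=\; -\angle(BA_S, BC) \;=\; -\angle(CB, CA_M) \;=\; \angle(CA_M, CB) \pmod{\pi}.
\]
On the other hand, Theorem \ref{theorem_AAY} says $BA_YCA_M$ is a parallelogram, hence $BA_Y \parallel CA_M$ and so $\angle(BA_Y, BC) = \angle(CA_M, CB) \pmod{\pi}$. Comparing the two equalities, the lines $BA_Y$ and $BA_S'$ coincide. An entirely symmetric calculation (swap the roles of $B$ and $C$) shows that $CA_Y$ and $CA_S'$ coincide as well. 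Since $A_Y$ and $A_S'$ both lie on lines $BA_Y$ and $CA_Y$, and these two distinct lines meet in a unique point, $A_Y = A_S'$, which is the claim.

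I do not expect a serious obstacle here; the only delicate point is tracking the sign flip produced by the reflection across $BC$, which is precisely what converts the symmedian direction $\angle(BA_S, BC)$ into the median-induced parallel direction $\angle(CA_M, CB)$. Using directed angles modulo $\pi$ (as the paper does elsewhere) avoids any case split on configuration.
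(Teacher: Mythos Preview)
Your argument is correct. The paper does not spell out a proof of this corollary; it is meant to fall out immediately from Theorem~\ref{theorem_AAY}, and your angle chase does exactly that, using only the parallelogram $BA_YCA_M$ and the isogonality of the $A$-median and $A$-symmedian via inscribed angles.

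A marginally slicker packaging of the same ingredients, and likely what the paper has in mind, is to compose two reflections. From the parallelogram, $A_Y$ is the reflection of $A_M$ in the midpoint $M_{BC}$ of $BC$. From the isogonality $(AB,AA_M)=(AA_S,AC)$ one gets equal arcs $BA_M=CA_S$ (and $CA_M=BA_S$) on $(ABC)$, so $A_M$ is the reflection of $A_S$ in the perpendicular bisector of $BC$. The composition of the reflection in the perpendicular bisector of $BC$ with the point reflection in $M_{BC}$ is precisely the reflection in line $BC$, giving $A_Y$ as the reflection of $A_S$ in $BC$. Your proof unwinds this at the level of directed angles on the two pencils through $B$ and $C$, which is equivalent; the only caution is the sign flip under the reflection, and you handle it correctly.
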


\begin{theorem}
\label{theorem_AX_isogonalconjugate}
$A_X$ is the isogonal conjugate of $A_Y$ with respect to triangle $ABC$.
\end{theorem}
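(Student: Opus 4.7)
My plan is to check directly that the cevians through $A_X$ and through $A_Y$ are pairwise isogonal at each vertex of $\triangle ABC$. At vertex $A$ this is free: Problem \ref{problem_usatst2008} identifies $AA_X$ with the $A$-symmedian, while the proof of Problem \ref{problem_elmo} shows that $AA_Y$ passes through the midpoint $M$ of $BC$, so $AA_Y$ is the $A$-median. Median and symmedian are reflections of each other across the internal bisector at $A$, so the $A$-isogonal condition is automatic. It remains to verify the isogonal condition at $B$, since the statement at $C$ is symmetric.

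The heart of the argument is to unpack each of $A_X$ and $A_Y$ by its most convenient defining property. For $A_X$ I will use the circle $\omega_1$ from Problem \ref{problem_usatst2008}, which passes through $A$ and $B$ and is tangent to $AC$ at $A$; the tangent--chord angle at $A$ gives $\angle A_XBA = \angle CAA_X$, and since $AA_X$ is the $A$-symmedian this in turn equals $\angle BAM$. For $A_Y$ I will invoke Corollary \ref{cor_AY_sym_AS}: $A_Y$ is the reflection of $A_S$ across $BC$, so $\angle A_YBC = \angle A_SBC$, which by the inscribed angle theorem applied in $(ABC)$ equals $\angle A_SAC = \angle CAA_X = \angle BAM$. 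Comparing the two computations yields $\angle A_XBA = \angle A_YBC$, which is exactly the $B$-isogonal relation between the cevians $BA_X$ and $BA_Y$.

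The identical chain of equalities with the roles of $B$ and $C$ swapped gives $\angle A_XCA = \angle A_YCB$, and together with the free $A$-isogonality this is by definition what it means for $A_Y$ to be the isogonal conjugate of $A_X$ in $\triangle ABC$. The one subtlety I expect is book-keeping with directed angles across the reflection step, since $A_S$ and $A_Y$ lie on opposite sides of $BC$ (while the tangent--chord and inscribed-angle substitutions take place in a single circle each); I plan to work modulo $\pi$ throughout, in line with the directed-angle conventions already used in the earlier proofs of the paper, so that the substitutions combine without any sign issues.
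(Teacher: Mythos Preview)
Your proof is correct and follows essentially the same strategy as the paper: the $A$-isogonality is free from the median/symmedian identification, and the $B$-isogonality is a short angle chase starting from the tangent--chord relation in $\omega_1$. The only difference is cosmetic: for the final link $(AB,AA_Y)=(BC,BA_Y)$ the paper quotes the tangent--chord angle in $\gamma_1$ directly, whereas you reach the same equality via Corollary~\ref{cor_AY_sym_AS} and the inscribed angle for $A_S$ in $(ABC)$; both routes yield the identical directed-angle identity, and your caution about the sign flip under the reflection across $BC$ is well placed but harmless once you track $(BA_Y,BC)=(BC,BA_S)$.
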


\begin{center}
\begin{figure}[htbp]
\includegraphics[scale=.5]{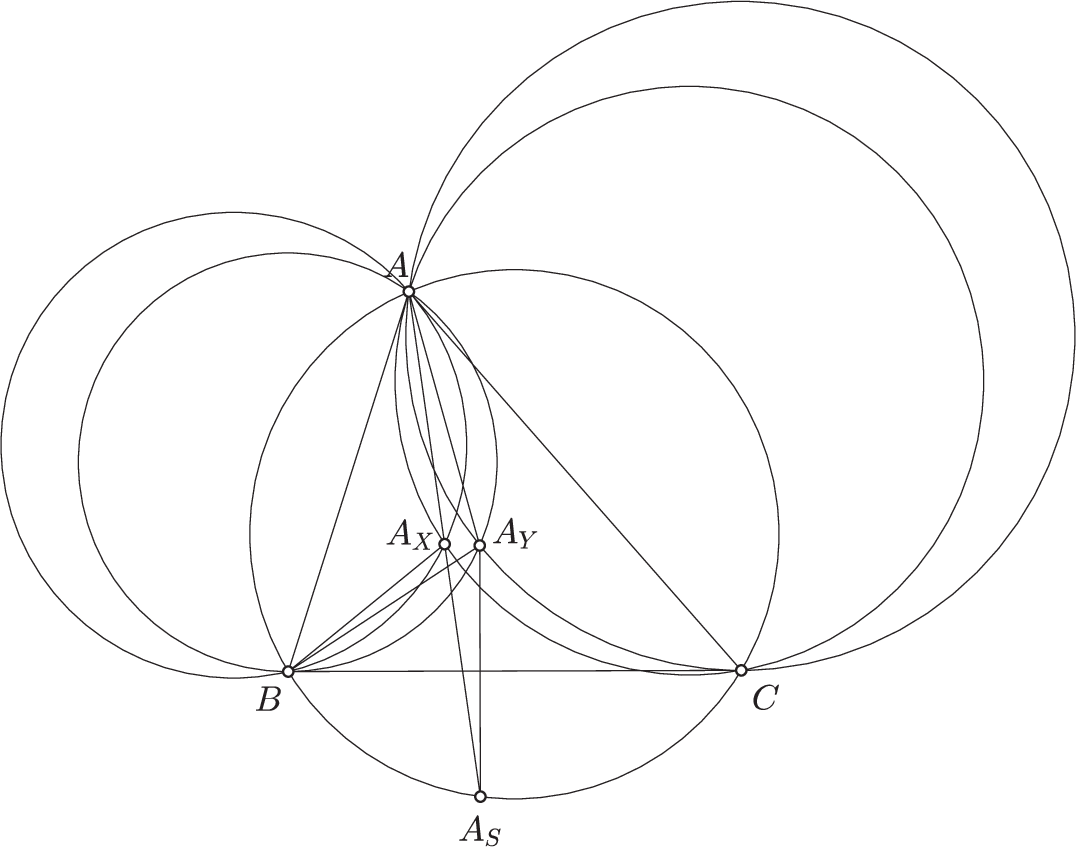}
\caption{Proof of Theorem \ref{theorem_AX_isogonalconjugate}}
\end{figure}
\end{center}

\begin{proof}
Because $AA_X$ and $AA_Y$ respectively be symmedian line and median line at vertex $A$ are two isogonal conjugate lines so we have to prove that $BA_X$ and $BA_Y$ are two isogonal conjugate ones. Through angle chasing,
$$(BA_X, BA)=(AA_X, AC)=(AB, AA_Y)=(BC, BA_Y),$$
we have $BA_X$ and $BA_Y$ are isogonal conjugate, therefore $A_X$ is the isogonal conjugate of $A_Y$ with respect to triangle $ABC$.
\end{proof}

\begin{theorem}
\label{theorem_AG_lie}
Given triangle $ABC$, centroid $G$. Let $M_{AB}$, $M_{BC}$, $M_{CA}$ respectively be midpoints of $BC$, $CA$, $AB$. Let $A_G$ be midpoint of $AA_Y$. Prove that $A_G$ lies on $(M_{AB}M_{BC}M_{CA})$.
\end{theorem}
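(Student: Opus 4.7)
The plan is to combine Theorem \ref{theorem_AAY} with the classical observation that the nine-point circle $(M_{AB}M_{BC}M_{CA})$ is the image of the circumcircle $(ABC)$ under the homothety $h$ centred at $G$ with ratio $-1/2$ (it maps $A, B, C$ respectively to $M_{AB}, M_{BC}, M_{CA}$). The goal will then be to show that $h$ sends $A_M$, which lies on $(ABC)$ by construction, to $A_G$; this is equivalent to the claim.

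I would work in vectors with the circumcentre $O$ as the origin, so that the orthocentre is $H = A+B+C$ and the nine-point centre is $N = H/2 = (A+B+C)/2$, with nine-point radius $R/2$. By Theorem \ref{theorem_AAY}, $BA_YCA_M$ is a parallelogram, whose diagonals $BC$ and $A_YA_M$ share the common midpoint $M_{AB}$; vectorially this reads $A_Y + A_M = B + C$. Substituting into $A_G = (A + A_Y)/2$ gives
$$A_G \;=\; \frac{A + B + C - A_M}{2} \;=\; N - \frac{A_M}{2}.$$
Since $|A_M| = R$, this yields $|A_G - N| = R/2$, and the result follows.

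A purely synthetic rewording of the same computation is to parameterise the $A$-median by $A \leftrightarrow 0$, $M_{AB} \leftrightarrow 1$, so that $G$ sits at $2/3$; the parallelogram of Theorem \ref{theorem_AAY} forces $A_Y$ to lie at parameter $2 - t_0$ whenever $A_M$ is at parameter $t_0$, whence $A_G$ lies at $1 - t_0/2$, which is exactly the image of $A_M$ under the homothety $h$. I do not anticipate a serious obstacle: the only insight required is to recognise $h$ as the right tool, after which the vector computation is a single line. The mild point to notice is that $A$, $G$, $A_Y$, $A_M$ are all collinear on the $A$-median (established in the proof of Problem \ref{problem_elmo}), which is what lets the homothety act along one line and makes the identification $h(A_M) = A_G$ transparent.
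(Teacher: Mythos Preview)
Your argument is correct. The key identity $A_G = N - \tfrac12 A_M$ follows immediately from the parallelogram of Theorem~\ref{theorem_AAY}, and then $|A_G - N| = R/2$ places $A_G$ on the nine-point circle. The homothety reformulation $h(A_M) = A_G$ is equally valid and perhaps the cleanest way to state it.

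Your route is genuinely different from the paper's. The paper proceeds by directed-angle chasing: it first records $(M_{AB}A, M_{AB}A_G) = (BA, BA_Y)$ from the midline, then computes $(M_{BC}H_C, M_{BC}A)$ (with $H_C$ the foot of the altitude from $C$) through a five-line angle chase and matches the two, concluding that $A_G$ lies on the circle through $M_{AB}$, $M_{BC}$, $H_C$. Your approach trades that synthetic chase for the single structural observation that the homothety $h_{G,-1/2}$ carries $(ABC)$ onto the nine-point circle, after which Theorem~\ref{theorem_AAY} does all the work in one line. What you gain is transparency and an explicit identification of $A_G$ as the image of $A_M$ under $h$, which also immediately explains later facts in the paper (for instance, it makes Theorem~\ref{theorem_AGY_midpoint} a one-liner: $A_{GY}$, being the reflection of $A_G$ over the midpoint of $M_{AB}M_{CA}$, is forced to be the midpoint of $AA_M$). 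What the paper's approach buys is that it stays entirely within the directed-angle formalism used throughout, without importing vectors or the nine-point homothety.
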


\begin{center}
\begin{figure}[htbp]
\includegraphics[scale=.5]{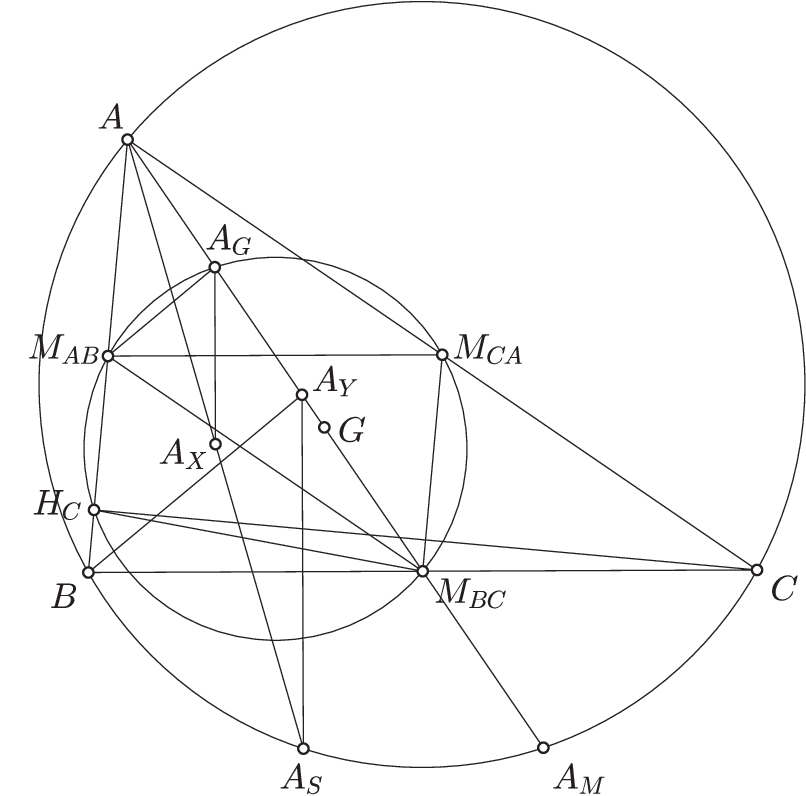}
\caption{Proof of Theorem \ref{theorem_AG_lie}}
\end{figure}
\end{center}

\begin{proof}
Because $M_{AB}A_G$ is the midline of triangle $ABA_Y$ so we have:
$$(M_{AB}A, M_{AB}A_G)=(BA, BA_Y)$$ 
Draw altitude $CH_C$ of triangle $ABC$. Through angle chasing,
$$\begin{aligned} (M_{BC}H_C, M_{BC}A) & =(M_{BC}H_C, BA)+(BA, CB)\\
& +(CB, CA)+(CA, M_{BC}A) \\
& =2(BA, CB)+(CB, CA)+(CA, M_{BC}A) \\
& =(BA, CB)+(AB, AM_{BC}) \\
& =(BA, BA_Y)+(BA_Y, CB)+(CB, CA_M) \\
& =(BA, BA_Y) \\
& =(M_{AB}A, M_{AB}A_G). \end{aligned}$$
Hence $A_G$ lies on $(M_{AB}H_CM_{BC}) \equiv (M_{AB}M_{BC}M_{CA})$. We have done.
\end{proof}

\begin{theorem}
\label{theorem_AGY_midpoint}
Draw parallelogram $M_{AB}A_GM_{CA}A_{GY}$. Then $A_{GY}$ is midpoint of $AA_M$.
\end{theorem}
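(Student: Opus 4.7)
My plan is to identify $M_{AB}A_GM_{CA}A_{GY}$ as the image of the parallelogram $BA_YCA_M$ from Theorem~\ref{theorem_AAY} under a single homothety, which trivialises the statement.

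Let $h$ denote the homothety centered at $A$ with ratio $\frac{1}{2}$. Then $h(B)=M_{AB}$ is the midpoint of $AB$, $h(C)=M_{CA}$ is the midpoint of $CA$, and $h(A_Y)=A_G$ directly from the definition of $A_G$ as the midpoint of $AA_Y$. Denoting by $N$ the midpoint of $AA_M$, we likewise have $h(A_M)=N$.

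By Theorem~\ref{theorem_AAY}, $BA_YCA_M$ is a parallelogram. A positive-ratio homothety preserves both the parallelogram structure and the cyclic order of the vertices, so applying $h$ yields that $M_{AB}A_GM_{CA}N$ is a parallelogram as well. On the other hand, $M_{AB}A_GM_{CA}A_{GY}$ is a parallelogram by construction, and any three consecutive vertices of a parallelogram determine the fourth uniquely; hence $A_{GY}=N$, which is exactly the claim.

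There is essentially no obstacle once the homothety is spotted. The only small verification is that the cyclic order $B,A_Y,C,A_M$ around the first parallelogram corresponds under $h$ to $M_{AB},A_G,M_{CA},A_{GY}$ in the same cyclic order; this is immediate because $h$ preserves orientation. An alternative, equally short route is a direct vector computation: one checks that $\tfrac12(M_{AB}+M_{CA})=\tfrac12(A_G+A_{GY})$ forces $A_{GY}=\tfrac12(A+B+C-A_Y)=\tfrac12(A+A_M)$ using $A_M=B+C-A_Y$ from Theorem~\ref{theorem_AAY}.
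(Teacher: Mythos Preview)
Your proof is correct. The homothety $h$ centered at $A$ with ratio $\tfrac12$ indeed sends the parallelogram $BA_YCA_M$ of Theorem~\ref{theorem_AAY} to $M_{AB}A_GM_{CA}N$, and matching this against the defining parallelogram $M_{AB}A_GM_{CA}A_{GY}$ forces $A_{GY}=N$.

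The paper takes a slightly different route: it introduces the auxiliary point $A_N$, the common center of the two parallelograms $M_{AB}A_GM_{CA}A_{GY}$ and $AM_{AB}M_{BC}M_{CA}$, and then chases signed segments to obtain $\overline{AA_{GY}}=\overline{A_GM_{BC}}=\tfrac12\overline{AA_M}$, using at the last step that $M_{BC}$ is the midpoint of $A_YA_M$ (from Theorem~\ref{theorem_AAY}). Your homothety argument packages the same linear algebra into one conceptual step and makes the dependence on Theorem~\ref{theorem_AAY} transparent; the paper's version is more hands-on but requires tracking the intermediate point $A_N$. Your closing ``alternative route'' via the vector identity $A_{GY}=\tfrac12(A+B+C-A_Y)=\tfrac12(A+A_M)$ is essentially the paper's computation stripped of the auxiliary point.
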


\begin{center}
\begin{figure}[htbp]
\includegraphics[scale=.5]{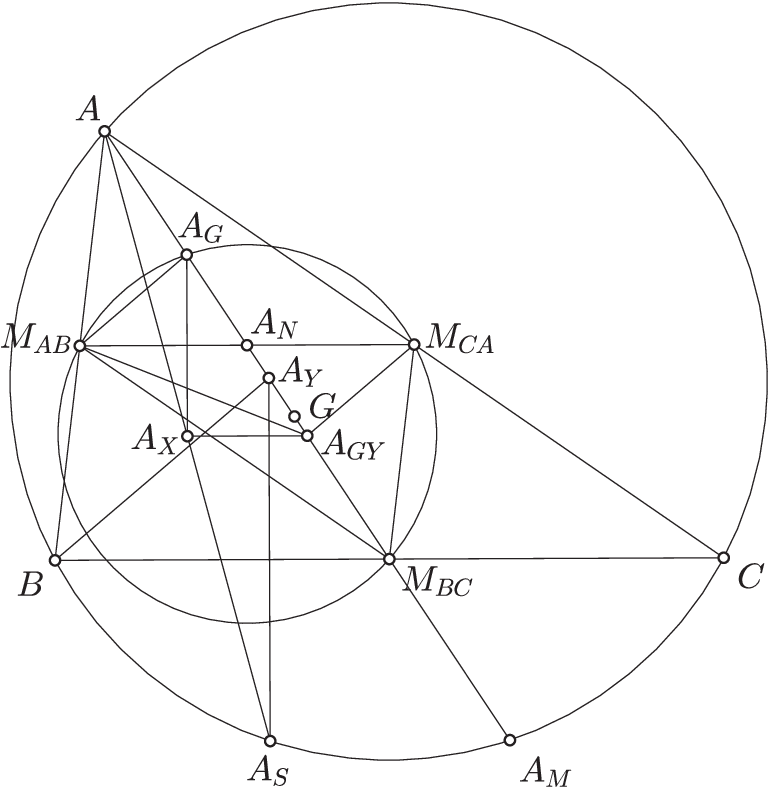}
\caption{Proof of Theorem \ref{theorem_AGY_midpoint}}
\end{figure}
\end{center}

\begin{proof}
Let $A_N$ be midpoint of $M_{AB}M_{CA}$. Because $M_{AB}A_GM_{CA}A_{GY}$ and $AM_{AB}M_{BC}M_{CA}$ are parallelogram, we can easily see that $A_N$ is midpoint of both $A_GA_{GY}$ and $AM_{BC}$. We have
$$\begin{aligned} \overline{AA_{GY}} & =\overline{AA_N}+\overline{A_NA_{GY}}=\overline{A_NM_{BC}}+\overline{A_GA_N} \\ 
& =\overline{A_GM_{BC}}=\overline{A_GA_Y}+\overline{A_YM_{BC}} \\ 
& =\frac{1}{2}\left(\overline{AA_Y}+\overline{A_YA_M}\right)=\frac{1}{2}\overline{AA_M}. \end{aligned}$$
Therefore $A_{GY}$ is midpoint of $AA_M$.
\end{proof}

\begin{cor}
\label{cor_AXAGY_parallel_BC}
$A_XA_{GY} \parallel BC$.
\end{cor}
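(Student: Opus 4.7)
The plan is to recognize both $A_X$ and $A_{GY}$ as midpoints of chords through $A$ of the circumcircle $(ABC)$, and then reduce the claim to the well-known fact that the median and symmedian through $A$ cut off equal arcs from $(ABC)$ on opposite sides of $BC$.

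First I would recall from the proof of Theorem \ref{theorem_AX_project} that $A_X$ is the midpoint of $AA_S$, where $A_S$ is the second intersection of the $A$-symmedian with $(ABC)$. By Theorem \ref{theorem_AGY_midpoint}, $A_{GY}$ is the midpoint of $AA_M$, where $A_M$ is the second intersection of the $A$-median ray $AA_Y$ with $(ABC)$. Under the homothety centered at $A$ with ratio $\tfrac{1}{2}$, the points $A_S, A_M$ map to $A_X, A_{GY}$, so $A_XA_{GY} \parallel A_SA_M$.

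It then remains to prove $A_SA_M \parallel BC$. Since $AA_S$ and $AA_M$ are isogonal at $A$, we have $\angle BAA_S = \angle CAA_M$; by the inscribed angle theorem in $(ABC)$, this yields arc $BA_S$ = arc $CA_M$ (taken on the arc $BC$ not containing $A$). Hence $BA_SA_MC$ is an isosceles trapezoid inscribed in $(ABC)$ with $BC \parallel A_SA_M$, and chaining the two parallelisms finishes the proof. The only step needing any care is maintaining a consistent cyclic order of $B, A_S, A_M, C$ when passing from equal inscribed angles to equal arcs; everything else is immediate from Theorems \ref{theorem_AX_project} and \ref{theorem_AGY_midpoint} together with the standard isogonality of median and symmedian.
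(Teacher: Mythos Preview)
Your argument is correct and is precisely the route the paper sets up: the proof of Theorem~\ref{theorem_AX_project} gives that $A_X$ is the midpoint of $AA_S$, Theorem~\ref{theorem_AGY_midpoint} gives that $A_{GY}$ is the midpoint of $AA_M$, and the corollary then follows from $A_SA_M\parallel BC$ via the homothety $\mathbf{H}_A^{1/2}$, exactly as you wrote. The paper states the corollary without proof, but your derivation is the intended one; your remark about checking the cyclic order of $B,A_S,A_M,C$ when converting equal inscribed angles into equal arcs is the only genuine care point, and you flagged it.
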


\begin{cor}
\label{cor_AX_sym_AG}
$A_X$ is symmetric with $A_G$ through $M_{AB}M_{CA}$.
\end{cor}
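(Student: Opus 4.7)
My plan is to reduce the claim $A_X=\sigma_m(A_G)$, where $m := M_{AB}M_{CA}$ and $\sigma_m$ denotes reflection across $m$, to a short composition-of-reflections calculation, by writing both $A_G$ and $A_X$ in terms of $A$ and $A_S$, where $A_S$ denotes the second intersection of the $A$-symmedian with $(ABC)$.

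The three affine identities I would collect at the outset are all already established in the paper. By the proof of Theorem~\ref{theorem_AX_project}, $A_X$ is the midpoint of $AA_S$, so $A_X=\tfrac12(A+A_S)$. By Corollary~\ref{cor_AY_sym_AS}, $A_Y=\sigma_{BC}(A_S)$, where $\sigma_{BC}$ is reflection across line $BC$. And by the definition in Theorem~\ref{theorem_AG_lie}, $A_G=\tfrac12(A+A_Y)$.

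I would then invoke two elementary facts about the midline $m$, which is parallel to $BC$ and equidistant from $A$ and from line $BC$: (a) $\sigma_m(A)=H_A$, the foot of the altitude from $A$ onto $BC$, since $m$ meets segment $AH_A$ at its midpoint; and (b) the product $\sigma_m\circ\sigma_{BC}$ of reflections across the parallel lines $BC$ and $m$ equals the translation by $\overr{H_AA}$ (twice the signed perpendicular distance from $BC$ to $m$). With these in hand, and using that a reflection is an isometry and so commutes with midpoint-taking, the proof becomes the single display
\[
\begin{aligned}
\sigma_m(A_G) &= \tfrac12\bigl(\sigma_m(A)+\sigma_m(A_Y)\bigr) = \tfrac12\bigl(H_A+(\sigma_m\circ\sigma_{BC})(A_S)\bigr)\\
&= \tfrac12\bigl(H_A+A_S+\overr{H_AA}\bigr) = \tfrac12(A+A_S) = A_X.
\end{aligned}
\]
I do not foresee any real obstacle: all the substantive geometric content is packaged in Theorem~\ref{theorem_AX_project} and Corollary~\ref{cor_AY_sym_AS}, and the only point that merits a moment's thought is fact (b), which is just the standard "product of two parallel reflections is a translation" identity.
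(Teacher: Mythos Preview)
Your argument is correct. The three inputs you cite---$A_X=\tfrac12(A+A_S)$ from Theorem~\ref{theorem_AX_project}, $A_Y=\sigma_{BC}(A_S)$ from Corollary~\ref{cor_AY_sym_AS}, and $A_G=\tfrac12(A+A_Y)$ from the definition in Theorem~\ref{theorem_AG_lie}---are exactly right, and the two midline facts (a) and (b) are standard. The displayed chain is clean and valid.

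Your route differs from the paper's intended one. In the paper, Corollary~\ref{cor_AX_sym_AG} is placed immediately after Theorem~\ref{theorem_AGY_midpoint} and Corollary~\ref{cor_AXAGY_parallel_BC}, and is meant to be read off from those: the parallelogram $M_{AB}A_GM_{CA}A_{GY}$ makes $A_G$ and $A_{GY}$ symmetric through the midpoint $A_N$ of $M_{AB}M_{CA}$, while $A_X$ and $A_{GY}$ are symmetric through the perpendicular to $m$ at $A_N$ (this is what $A_XA_{GY}\parallel BC$ together with $A_X=\tfrac12(A+A_S)$, $A_{GY}=\tfrac12(A+A_M)$ and the isogonality of $A_S,A_M$ across the perpendicular bisector of $BC$ give); composing the point-reflection at $A_N$ with that perpendicular reflection yields $\sigma_m$. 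Your approach bypasses Theorem~\ref{theorem_AGY_midpoint} and Corollary~\ref{cor_AXAGY_parallel_BC} entirely, reaching back only to Theorem~\ref{theorem_AX_project} and Corollary~\ref{cor_AY_sym_AS}; it is shorter and logically earlier in the dependency graph, at the modest cost of not tying the result to the $A_{GY}$ picture that the paper has just developed.
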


\begin{theorem}
$A_{Y}$ is the Anticomplement point of $A_{GY}$ with respect to triangle $ABC$.
\end{theorem}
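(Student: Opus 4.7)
The plan is a direct vector calculation that combines Theorem \ref{theorem_AAY} and Theorem \ref{theorem_AGY_midpoint}. Recall that the Anticomplement of a point $P$ with respect to triangle $ABC$ is, by definition, its image under the homothety of ratio $-2$ centered at the centroid $G$; equivalently, in vector form, it is the point $P_a$ determined by
$$P_a = 3G - 2P = (A+B+C) - 2P.$$
So the theorem is equivalent to the single identity $A_Y = (A+B+C) - 2 A_{GY}$.

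First, I would invoke Theorem \ref{theorem_AAY}: the quadrilateral $BA_YCA_M$ is a parallelogram, so its two diagonals $A_YA_M$ and $BC$ bisect each other. Reading this as a vector identity gives $A_Y + A_M = B + C$, hence $A_M = B + C - A_Y$. Next, by Theorem \ref{theorem_AGY_midpoint}, $A_{GY}$ is the midpoint of segment $AA_M$, which yields $2A_{GY} = A + A_M$.

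Substituting the first identity into the second gives
$$2 A_{GY} = A + A_M = A + (B+C-A_Y) = (A+B+C) - A_Y,$$
and rearranging produces $A_Y = (A+B+C) - 2 A_{GY}$, exactly the Anticomplement relation. There is essentially no obstacle in this argument; the two preceding theorems do all of the geometric work, and the only subtlety is to express the Anticomplement map in vector form so that the parallelogram diagonal condition and the midpoint-of-$AA_M$ condition chain together to isolate $A_Y$.
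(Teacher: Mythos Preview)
Your proof is correct and follows essentially the same approach as the paper: a short vector computation that combines the parallelogram $BA_YCA_M$ from Theorem~\ref{theorem_AAY} with the midpoint description of $A_{GY}$ from Theorem~\ref{theorem_AGY_midpoint} to obtain the relation $\overline{A_YG}=2\,\overline{GA_{GY}}$. Your use of position vectors is somewhat more streamlined than the paper's chain of displacement-vector identities through the auxiliary points $A_N$, $A_G$, $M_{BC}$, but the underlying ingredients and logic are the same.
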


\begin{proof}
From these above properties, we have:
$$\begin{aligned} \overline{GA_{GY}} & =\overline{GM_{BC}}-\overline{A_{GY}M_{BC}}=2\overline{A_NG}-\overline{A_GA_Y} \\ & =\overline{2A_NG}-\overline{A_GA_N}-\overline{A_NA_Y}=\overline{A_NA_Y}+2\overline{A_YG}-\overline{A_NA_{GY}} \\ & =\overline{A_{GY}A_Y}+2\overline{A_YG}=\overline{A_{GY}G}+\overline{A_YG}. \end{aligned}$$
Hence $\overline{A_{Y}G}=2\overline{GA_{GY}}$ therefore $A_{Y}$ is the Anticomplement point of $A_{GY}$ with respect to triangle $ABC$.
\end{proof}

\begin{theorem}
\label{theorem_ZAZBZC_inscribed}
The first Brocard triangle is a triangle which has three vertices respectively be intersection points of lines $AZ_1$, $BZ_1$, $CZ_1$, $AZ_2$, $BZ_2$, $CZ_2$. Let $Z_A, Z_B, Z_C$ be three vertices of this triangle. Prove that $Z_AZ_BZ_C$ is inscribed in $(A_XB_XC_X)$.
\end{theorem}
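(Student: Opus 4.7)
The plan is to show individually that $Z_A, Z_B, Z_C$ each lie on the Brocard circle $(A_XB_XC_X)$. By Theorem \ref{theorem_2nd_brocard} (and the proof of it via Lemma \ref{lemma_XYZ}), this circle has diameter $OL$, so the task reduces to checking $\angle OZ_AL = \pi/2$, and cyclically for the other two vertices.

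First I pin down $Z_A$: from the description of the first Brocard triangle I take $Z_A = BZ_1 \cap CZ_2$ (the vertex opposite $A$), with $Z_B, Z_C$ obtained by cyclic permutation. The defining angle relations of the Brocard points give $\angle Z_1BC = \omega = \angle Z_2CB$, where $\omega$ is the Brocard angle, so triangle $Z_ABC$ is isosceles with base angles $\omega$ at $B$ and $C$. Consequently, $Z_A$ lies on the perpendicular bisector of $BC$, which also contains $O$; in particular $OZ_A \perp BC$.

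With this in hand, the condition $\angle OZ_AL = \pi/2$ reduces to $LZ_A \parallel BC$, which I would verify by computing the perpendicular distances from $L$ and from $Z_A$ to the line $BC$. From the isosceles triangle $Z_ABC$ the apex height is $d(Z_A, BC) = \tfrac{a}{2}\tan\omega$. From the standard trilinear coordinates $(a:b:c)$ of the symmedian point $L$ the distance to side $BC$ is $d(L, BC) = \tfrac{2aS}{a^2+b^2+c^2}$, where $S$ denotes the area of $\triangle ABC$. Applying the classical Brocard identity $\tan\omega = \tfrac{4S}{a^2+b^2+c^2}$ shows these two quantities coincide, so $L$ and $Z_A$ are equidistant from $BC$ on the same side of it, i.e.\ $LZ_A \parallel BC$. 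Combining with $OZ_A \perp BC$ yields $\angle OZ_AL = \pi/2$, placing $Z_A$ on the circle of diameter $OL$. The analogous argument with $A, B, C$ cyclically permuted puts $Z_B, Z_C$ on the same circle, completing the proof.

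The main obstacle is really just the invocation of the two classical formulas, the Brocard-angle identity and the symmedian-point distance formula, neither of which is explicitly established earlier in the paper; each would need either a citation or a short trilinear-coordinate derivation. A purely synthetic proof of $LZ_A \parallel BC$ in the spirit of the earlier angle-chasing arguments (for instance, via a Mannheim-type reuse of Lemma \ref{lemma_mannheim} on the configuration of Brocard cevians) would be more in keeping with the paper's style but seems to require auxiliary lemmas beyond those already developed.
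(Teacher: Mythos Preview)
Your argument is correct and self-contained modulo the two classical identities you flag. The identification $Z_A = BZ_1\cap CZ_2$ is the right one, the isosceles observation gives $OZ_A\perp BC$, and the distance computation $d(Z_A,BC)=\tfrac{a}{2}\tan\omega=\tfrac{2aS}{a^2+b^2+c^2}=d(L,BC)$ does force $LZ_A\parallel BC$, whence $\angle OZ_AL=\tfrac{\pi}{2}$ and $Z_A$ lies on the circle of diameter $OL$.

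This is, however, a genuinely different route from the paper's. The paper does not use the diametral description $OL$ at all in this proof; instead it angle-chases directly to show that $(A_XZ_1,A_XZ_2)=2\alpha=(Z_AZ_1,Z_AZ_2)$, where $\alpha$ is the Brocard angle, and concludes that $Z_A$ lies on $(Z_1A_XZ_2)$. Your approach is shorter and more conceptual, and it surfaces the pleasant fact $LZ_A\parallel BC$ which the paper does not isolate; the price is the dependence on the formulas $\tan\omega=\tfrac{4S}{a^2+b^2+c^2}$ and $d(L,BC)=\tfrac{2aS}{a^2+b^2+c^2}$, which are standard but external to the paper. The paper's version stays entirely within its directed-angle idiom and needs no outside input, at the cost of a longer and somewhat opaque chain of equalities. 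If you want to match the paper's style, a one-line synthetic replacement for your distance step is available: the circle through $A,B$ tangent to $BC$ at $B$ (your $\gamma_1$ from Problem~\ref{problem_elmo}) contains both $Z_1$ and $L$, and since $\angle Z_1BC=\omega$ the tangent--chord angle gives $\angle BLZ_1$ equal to the angle $BC$ makes with $LZ_A$; but the trigonometric version you wrote is perfectly adequate once the two formulas are cited.
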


\begin{center}
\begin{figure}[htbp]
\includegraphics[scale=.5]{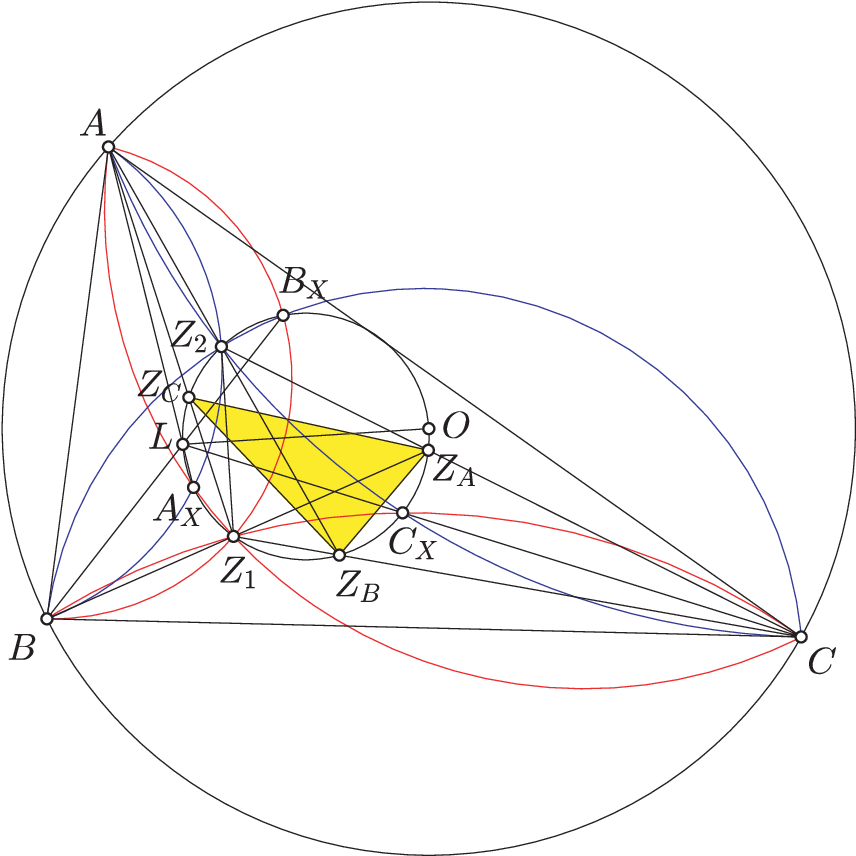}
\caption{Proof of Theorem \ref{theorem_ZAZBZC_inscribed}}
\end{figure}
\end{center}

\begin{proof}
Let $(AZ_1, AB)=(AC, AZ_2)=\alpha$. Through angle chasing,
$$\begin{aligned} (A_XZ_1, A_XZ_2) & =(A_XZ_1, A_XC)+(A_XC, A_XA)+(A_XA, A_XZ_2) \\
& = (AZ_1, AC)+(A_XA, A_XB)+(A_XA, Z_2A)+(Z_2A, A_XZ_2) \\
& = (AZ_1, AC)+(A_XA, A_XB)+(A_XA, Z_2A)+(BA, BA_X) \\
& = (AZ_1, AC)+(A_XA, A_XB)+(A_XA, Z_2A)+(AC, AA_X) \\
& = (AZ_1, AB)+(AB, AA_X)+(A_XA, A_XB)+(A_XA, Z_2A) \\
& = (AZ_1, AB)+(BA, A_XB)+(A_XA, Z_2A) \\
& = (AZ_1, AB)+(AC, AA_X)+(AA_X, AZ_2) \\
& = (AZ_1, AB)+(AC, AZ_2) \\
& = 2\alpha \text{ (mod } \pi). \end{aligned}$$
On the other hand, we have
$$(Z_AZ_1, Z_AZ_2)=(Z_AZ_1, CB)+(CB, Z_AZ_2)=\alpha+\alpha=2\alpha.$$
Therefore $Z_A$ lies on $(Z_1A_XZ_2) \equiv (A_XB_XC_X)$. Similarly, $Z_B, Z_C$ lie on $(A_XB_XC_X)$. WE have done.
\end{proof}

\begin{cor}
$Z_1$ is symmetric with $Z_2$ through $OL$ and $(OZ_1, OZ_2)=2\alpha$.
\end{cor}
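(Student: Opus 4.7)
My plan is to reduce both statements to inscribed-angle computations in the Brocard circle, using that $O$, $L$, $A_X$, $Z_1$, $Z_2$ all lie on it (Theorems~\ref{theorem_2nd_brocard} and \ref{theorem_ZAZBZC_inscribed}). The angle equality is then immediate: transferring $(A_XZ_1,A_XZ_2)=2\alpha$, already established inside the proof of Theorem~\ref{theorem_ZAZBZC_inscribed}, from vertex $A_X$ to vertex $O$ via the inscribed-angle theorem on chord $Z_1Z_2$ yields $(OZ_1,OZ_2)=2\alpha\pmod{\pi}$.

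For the reflection through $OL$, I will combine two facts. First, $OL$ is a diameter of the Brocard circle: by Theorem~\ref{theorem_AX_project} we have $OA_X\perp AL$, so $\angle OA_XL=\pi/2$ places $A_X$ on the circle with diameter $OL$, and by Theorem~\ref{theorem_2nd_brocard} this circle coincides with the Brocard circle; in particular $\angle OZ_1L=\angle OZ_2L=\pi/2$. Second, $OL$ should bisect $\angle Z_1OZ_2$: the angle chain in the proof of Theorem~\ref{theorem_ZAZBZC_inscribed} actually passes through the intermediate identities $(A_XZ_1,A_XA)=\alpha$ and $(A_XA,A_XZ_2)=\alpha$ (whose sum is the $2\alpha$ of that theorem), and since $A$, $A_X$, $L$ are collinear on the $A$-symmedian, the inscribed-angle theorem in the Brocard circle gives
\[
(OZ_1,OL)=(A_XZ_1,A_XA)=\alpha\quad\text{and}\quad(OL,OZ_2)=(A_XA,A_XZ_2)=\alpha.
\]

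Combining the two facts, the right triangles $OZ_1L$ and $OZ_2L$ share the hypotenuse $OL$ and have equal acute angle $\alpha$ at $O$, with $Z_1$ and $Z_2$ on opposite sides of $OL$; they are therefore congruent, forcing $OZ_1=OZ_2$ and $LZ_1=LZ_2$, i.e.\ $Z_1$ and $Z_2$ are mirror images across $OL$. The delicate point I expect is isolating the intermediate value $(A_XZ_1,A_XA)=\alpha$ cleanly---either as a halfway reading of the chain in the proof of Theorem~\ref{theorem_ZAZBZC_inscribed}, or reproved directly from the tangent-chord identities $(A_XA,A_XB)=(AC,AB)$ and $(A_XA,A_XC)=(AB,AC)$ combined with the Brocard relation $(AZ_1,AB)=\alpha$; everything else is routine bookkeeping with the inscribed-angle theorem.
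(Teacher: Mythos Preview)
The paper states this corollary without proof, treating it as an immediate consequence of Theorem~\ref{theorem_ZAZBZC_inscribed} together with the standard description of the Brocard circle in Theorem~\ref{theorem_2nd_brocard}. Your argument is correct and supplies exactly the details one would want: the inscribed-angle transfer from $A_X$ to $O$ for the $2\alpha$ claim, and the diameter-plus-bisector argument for the reflection claim.

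One simplification for the ``delicate point'' you flag. Rather than extracting $(A_XZ_1,A_XA)=\alpha$ as a halfway reading of the long chain in the proof of Theorem~\ref{theorem_ZAZBZC_inscribed}, use what the proof of Theorem~\ref{theorem_2nd_brocard} already records: $Z_1$ lies on $(CAA_X)=\omega_2$. Then the inscribed-angle theorem in $\omega_2$ gives directly
\[
(A_XZ_1,A_XA)=(CZ_1,CA)=\alpha,
\]
and symmetrically $Z_2\in(ABA_X)=\omega_1$ yields $(A_XA,A_XZ_2)=(BA,BZ_2)=\alpha$. This avoids both the long chain and the tangent--chord detour, and makes the bisection $(OZ_1,OL)=(OL,OZ_2)=\alpha$ a one-line consequence via $A_XL\equiv A_XA$. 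With that in place, your congruent-right-triangles finish (or, equivalently, the observation that reflection in the diameter $OL$ preserves the Brocard circle and sends the line $OZ_1$ to $OZ_2$) is clean and complete.
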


\begin{theorem}
\label{theorem_OWCWparallelOZA}
$OA_X$, $CZ_2$ cut $\omega_1$ at $O_W$, $C_W$. Then $O_WC_W \parallel OZ_A$.
\end{theorem}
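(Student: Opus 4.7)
The plan is to convert the stated parallelism into the single perpendicularity $O_W C_W \perp BC$, and then to verify the latter by identifying $C_W$ synthetically. First I would show that $AO_W$ is a diameter of $\omega_1$. By Theorem~\ref{theorem_AX_project}, $A_X$ is the foot of the perpendicular from $O$ to the $A$-symmedian $AA_X$, so the line $OA_X$ is perpendicular to $AA_X$ at $A_X$. Since both $A_X$ and $O_W$ lie on $\omega_1 \cap OA_X$, the chord $A_X O_W$ is perpendicular to the chord $AA_X$ at $A_X$, and the converse of Thales' theorem then yields that $AO_W$ is a diameter of $\omega_1$. An immediate consequence is $\angle AC_W O_W = 90^\circ$, i.e.\ $O_W C_W \perp AC_W$.

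Next, note that $Z_A = BZ_1 \cap CZ_2$ (the convention implicit in Theorem~\ref{theorem_ZAZBZC_inscribed}) satisfies $\angle Z_A BC = \angle Z_A CB = \alpha$, so $\triangle Z_A BC$ is isosceles and $Z_A$ lies on the perpendicular bisector of $BC$, which also contains $O$. Thus $OZ_A \perp BC$, and combining this with the previous paragraph, the goal $O_W C_W \parallel OZ_A$ reduces to $AC_W \parallel BC$.

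For the main step, introduce $C_W'$ as the second intersection of $\omega_1$ with the line through $A$ parallel to $BC$; it suffices to show $C_W' \in CZ_2$, which forces $C_W = C_W'$. The tangent-chord theorem at $A$ (tangent $CA$, chord $AC_W'$) gives $\angle ABC_W' = C$; a parallel-line argument across transversal $AB$ gives $\angle BAC_W' = B$, whence $\angle CBC_W' = \pi - A$. Since $\omega_1$ has radius $r_1 = AB/(2\sin A)$, the extended law of sines in $\omega_1$ gives $BC_W' = 2 r_1 \sin B$. Applying the law of sines in $\triangle BCC_W'$ with these data reduces the problem to the identity
$$\tan(\angle BCC_W') = \frac{\sin A \sin B \sin C}{\sin^2 A + \sin B \sin C \cos A},$$
which by the Brocard angle identity $\cot\alpha = \cot A + \cot B + \cot C$ is exactly $\tan\alpha$. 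Hence $\angle BCC_W' = \alpha = \angle BCZ_2$, so $C_W' \in CZ_2$ as required. The main obstacle is precisely this final trigonometric step; the remaining steps are short angle chases using only results already established in the paper. A purely synthetic verification of $C_W' \in CZ_2$, for instance via Reim's theorem on a suitable auxiliary circle through $B, C, Z_2$, would be desirable but is not immediately apparent.
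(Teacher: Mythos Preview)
Your reduction is exactly the paper's: show $AO_W$ is a diameter of $\omega_1$ via Theorem~\ref{theorem_AX_project}, deduce $O_WC_W\perp AC_W$, observe $OZ_A\perp BC$, and reduce everything to $AC_W\parallel BC$. Your trigonometric verification of this last fact is correct; the law-of-sines computation and the identity
\[
\tan\alpha=\frac{\sin A\sin B\sin C}{\sin^2 A+\sin B\sin C\cos A},
\]
which is just a rewriting of $\cot\alpha=\cot A+\cot B+\cot C$, check out.

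Where your route diverges from the paper is only in this final step. The paper's Lemma~\ref{lemma_BWCWparallelBC} dispatches $AC_W\parallel BC$ synthetically in one line, using a fact you overlooked: $Z_2$ already lies on $\omega_1$, because $\omega_1$ (the circle through $A,B$ tangent to $CA$ at $A$) is one of the three defining circles of the second Brocard point. With $A,B,Z_2,C_W$ concyclic on $\omega_1$ and $C,Z_2,C_W$ collinear, the inscribed-angle chase
\[
(C_WA,\,CC_W)=(C_WA,\,C_WZ_2)=(BA,\,BZ_2)=\alpha=(CZ_2,\,CB)=(CC_W,\,CB)
\]
gives $AC_W\parallel BC$ immediately. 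This is precisely the ``purely synthetic verification'' you asked for at the end --- no auxiliary circle through $B,C,Z_2$ is needed, because $Z_2$ sits on $\omega_1$ itself. Your trigonometric route is self-contained and avoids quoting the Brocard-circle construction, at the cost of a longer computation; the paper's route is shorter but leans on that classical fact.
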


\begin{lemma}
\label{lemma_BWCWparallelBC}
Let $BZ_1$ cut $\omega_2$ at $B_W$. Prove that $B_WC_W \parallel BC$.
\end{lemma}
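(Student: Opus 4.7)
The crucial observation is that the first Brocard point $Z_1$ itself lies on the circle $\omega_2$, and symmetrically $Z_2 \in \omega_1$. Granting this, $\omega_2$ is tangent to $AB$ at $A$, so the power of $B$ with respect to $\omega_2$ equals $BA^2 = c^2$; since the line $BZ_1$ meets $\omega_2$ in $Z_1$ and $B_W$, this gives
\[ BZ_1 \cdot BB_W = c^2, \qquad \text{and symmetrically} \qquad CZ_2 \cdot CC_W = b^2. \]
From here the strategy is to compute $BZ_1$ and $CZ_2$ via the Law of Sines, show that $BB_W = CC_W$, and then deduce $B_WC_W \parallel BC$ from reflectional symmetry about the perpendicular bisector of $BC$.

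The justification of $Z_1 \in \omega_2$ is a single application of the tangent-chord criterion: since $AB$ is tangent to $\omega_2$ at $A$ and $C \in \omega_2$, a point $P$ lies on $\omega_2$ iff $(AB,AP) \equiv (CA,CP) \pmod{\pi}$. For $P = Z_1$, both sides equal $\omega$ by the very definition of the first Brocard point ($\angle Z_1AB = \omega = \angle Z_1CA$), so $Z_1 \in \omega_2$; the symmetric argument gives $Z_2 \in \omega_1$. In triangle $ABZ_1$ the angles at $A$ and $B$ are $\omega$ and $\angle B - \omega$, so the angle at $Z_1$ is $\pi - \angle B$, and the Law of Sines yields $BZ_1 = c\sin\omega/\sin B$; therefore $BB_W = c^2/BZ_1 = c\sin B/\sin\omega$. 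The analogous computation on $\omega_1$ gives $CC_W = b\sin C/\sin\omega$, and the Law of Sines in $\triangle ABC$ (namely $b\sin C = c\sin B$) collapses these into $BB_W = CC_W$.

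For the final step, both rays $BB_W$ and $CC_W$ emanate on the same side of $BC$ as $A$ and make equal angle $\omega$ with $BC$; together with $BB_W = CC_W$ this shows that $B_W$ and $C_W$ are reflections of one another across the perpendicular bisector of $BC$. Hence $B_WC_W$ is perpendicular to that bisector, i.e.\ $B_WC_W \parallel BC$. The only nontrivial ingredient is noticing that $Z_1 \in \omega_2$; once that is in hand, the rest is just a power-of-a-point and Law-of-Sines calculation.
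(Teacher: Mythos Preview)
Your argument is correct. The observation $Z_1\in\omega_2$, $Z_2\in\omega_1$ is exactly what makes the power-of-a-point step legitimate, your Law-of-Sines computations of $BZ_1$ and $CZ_2$ are right, and the identity $b\sin C=c\sin B$ does give $BB_W=CC_W$. Since the rays $BZ_1$ and $CZ_2$ meet $BC$ at the same angle $\omega$ on the same side, the reflection across the perpendicular bisector of $BC$ indeed swaps $B_W$ and $C_W$, so $B_WC_W\parallel BC$.

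The paper takes a different (and shorter) route. It also uses $Z_1\in\omega_2$, but only implicitly, through the inscribed-angle step
\[
(BB_W,BC)=(CA,CZ_1)=(B_WA,B_WZ_1),
\]
where the second equality is the inscribed angle in $\omega_2$ on the chord $AZ_1$. Since $B,Z_1,B_W$ are collinear this reads $(B_WB,BC)=(B_WA,B_WB)$, which is precisely the alternate-angle criterion for $AB_W\parallel BC$; the symmetric computation gives $AC_W\parallel BC$. Thus the paper actually proves the stronger statement that $A$, $B_W$, $C_W$ are collinear on the line through $A$ parallel to $BC$. Your metric argument trades this extra information for a more computational proof; both are valid, but the paper's angle chase is what is later used (in the proof of Theorem~\ref{theorem_OWCWparallelOZA}) to recognise $BCB_WC_W$ as an isosceles trapezoid.
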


\begin{proof}
We have $(BB_W, BC)=(CA, CZ_1)=(B_WA, B_WZ_1)$ so $AB_W \parallel BC$. Similarly, $AC_W \parallel BC$ so $B_WC_W \parallel BC$. We have done.
\end{proof}

Back to Theorem \ref{theorem_OWCWparallelOZA},

\begin{center}
\begin{figure}[htbp]
\includegraphics[scale=.5]{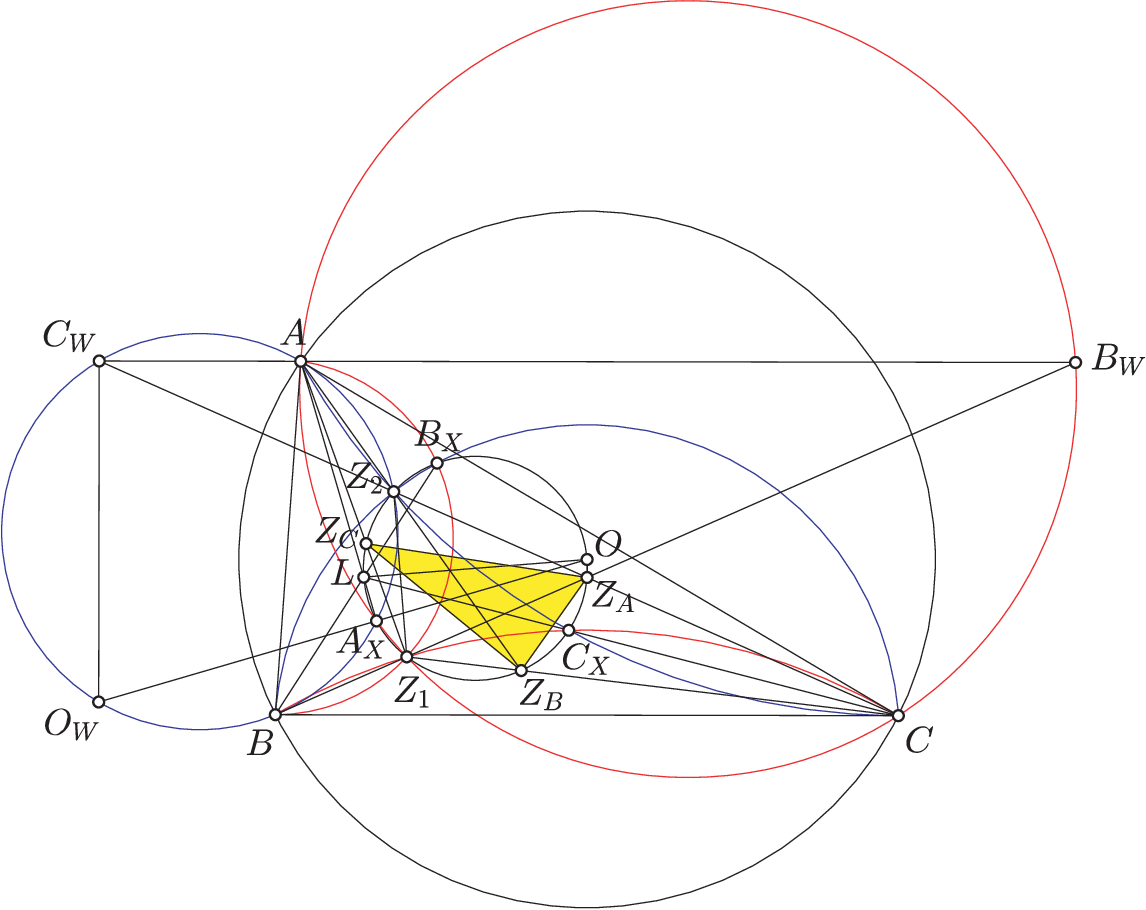}
\caption{Proof of Theorem \ref{theorem_OWCWparallelOZA}}
\end{figure}
\end{center}

\begin{proof}
By Lemma \ref{lemma_BWCWparallelBC} and $(BC, BZ_A)=(CZ_A, CB)$, we have $BCB_WC_W$ is an isosceles trapezoid or $OZ_A$ is perpendicular to $B_WC_W$. On the other hand, by Theorem \ref{theorem_2nd_brocard}, we have $(A_XA, A_XO_W)=(C_WA, C_WO_W)=\frac{\pi}{2}$. Therefore $O_WC_W$ is perpendicular to $B_WC_W$. So $O_WC_W \parallel OZ_A$.
\end{proof}

\begin{theorem}
\label{theorem_concurrentG}
$A_XZ_A$, $B_XZ_B$, $C_XZ_C$ are concurrent at $G$.
\end{theorem}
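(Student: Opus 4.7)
The plan is to pass to barycentric coordinates with respect to $\triangle ABC$ and reduce the concurrence to a single determinant identity. The results already established in the paper give easy access to coordinates for both $A_X$ and $Z_A$, after which verifying that the centroid lies on $A_XZ_A$ becomes a short algebraic check.

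First, I would compute $A_X$. By Theorem \ref{theorem_AX_project}, $A_X$ is the midpoint of $A$ and the second intersection $A_S$ of the $A$-symmedian with the circumcircle. The symmedian passes through $(0:b^2:c^2)$, and substituting into the circumcircle equation $a^2yz+b^2zx+c^2xy=0$ produces $A_S=(-a^2:2b^2:2c^2)$. After normalizing and averaging with $A=(1:0:0)$ I would obtain
$$A_X=(\,b^2+c^2-a^2\,:\,b^2\,:\,c^2\,).$$

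Next I would compute $Z_A$. The angle chase in the proof of Theorem \ref{theorem_ZAZBZC_inscribed} identifies $Z_A$ as the intersection of $BZ_1$ and $CZ_2$ (these are the two cevians whose directions from $B$ and $C$ make angle $\alpha$ with $BC$). Using the standard Brocard-point coordinates $Z_1=(a^2c^2:a^2b^2:b^2c^2)$ and $Z_2=(a^2b^2:b^2c^2:c^2a^2)$, the two cevians simplify to $b^2x=a^2z$ and $c^2x=a^2y$, yielding
$$Z_A=(\,a^2\,:\,c^2\,:\,b^2\,).$$
Concurrence at $G=(1:1:1)$ then reduces to
$$\det\begin{pmatrix} b^2+c^2-a^2 & b^2 & c^2 \\ a^2 & c^2 & b^2 \\ 1 & 1 & 1 \end{pmatrix}=0,$$
and a short cofactor expansion shows that this determinant vanishes identically. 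By the cyclic symmetry of the construction, the same computation (with $a,b,c$ permuted cyclically) gives $G\in B_XZ_B$ and $G\in C_XZ_C$, so the three lines concur at $G$.

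The main obstacle is genuine bookkeeping rather than any conceptual difficulty: one must correctly extract the pairing $Z_A=BZ_1\cap CZ_2$ (and not the other pairing $BZ_2\cap CZ_1$) from the angle relations, and remember to normalize $A_S$ before taking the midpoint with $A$. A synthetic alternative is conceivable — both Brocard triangles are inscribed in the Brocard circle and can be shown to share $G$ as their common centroid, which together with a suitable direct similarity between them would force perspective from $G$ — but pinning that similarity down synthetically appears more delicate than the barycentric computation above.
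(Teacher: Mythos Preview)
Your barycentric computation is correct. The coordinates $A_X=(b^2+c^2-a^2:b^2:c^2)$ and $Z_A=(a^2:c^2:b^2)$ are right (the latter indeed coming from $BZ_1\cap CZ_2$, consistent with the angle equalities $(Z_AZ_1,CB)=\alpha=(CB,Z_AZ_2)$ used in Theorem~\ref{theorem_ZAZBZC_inscribed}), and the $3\times3$ determinant does vanish identically, so $G\in A_XZ_A$; cyclic symmetry finishes it.

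This is, however, a genuinely different route from the paper. The paper gives two synthetic proofs. The first applies the projection--concurrence Lemma~\ref{lemma_conccurentXXYYZZ} to the first Brocard triangle $Z_AZ_BZ_C$ (after Lemma~\ref{lemma_centroid_ZAZBZC} identifies $G$ as its centroid), using that $A_X,B_X,C_X$ and $Z_A,Z_B,Z_C$ all sit on the circle of diameter $OL$ as the appropriate foot-of-perpendicular points. The second recognises the Brocard circle as the Hagge circle of $G$ with respect to the medial triangle (via Corollary~\ref{cor_AX_sym_AG}) and then invokes Lemma~\ref{lemma_P_concurrent}. Your approach is shorter and entirely mechanical once the two earlier theorems hand you coordinates; what you trade away is the structural insight those synthetic proofs provide---namely that the concurrence is an instance of a general Hagge-circle phenomenon (Lemma~\ref{lemma_P_concurrent}) and of the isogonal-pairing lemma on a diameter circle (Lemma~\ref{lemma_conccurentXXYYZZ}). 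Your closing remark about the ``shared centroid plus direct similarity'' idea is in fact close in spirit to the paper's first proof, which packages exactly that mechanism inside Lemma~\ref{lemma_conccurentXXYYZZ}.
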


\begin{lemma}
\label{lemma_parallelogramACDB}
Given triangle $ABC$. On the plane containing this triangle, choose $A'$, $B'$, $C'$ such that the triangles $A'CB$, $B'AC$, $C'BA$ are homothetic. We construct the parallelogram $BA'CD$. Prove that $A'CDB$ is also a parallelogram.
\end{lemma}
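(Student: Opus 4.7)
The claim can be verified directly from the vertex labelings. A quadrilateral $WXYZ$ is a parallelogram if and only if the midpoints of its diagonals $WY$ and $XZ$ coincide, equivalently $W+Y=X+Z$ as position vectors. My plan is to apply this diagonal-midpoint characterization to both named quadrilaterals and observe that the two conditions are the same.

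From the hypothesis that $BA'CD$ is a parallelogram with vertices in the cyclic order $B\to A'\to C\to D$, the diagonals are $BC$ and $A'D$, so the midpoints coincide, i.e.\ $B+C=A'+D$; equivalently $D=B+C-A'$, which is precisely what the construction of $D$ supplies.

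For the conclusion $A'CDB$ to be a parallelogram with vertices in the cyclic order $A'\to C\to D\to B$, the diagonals are $A'D$ and $CB$, so I would need $A'+D=C+B$. But this is the same equation that the hypothesis already delivers, so the conclusion is immediate.

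In fact the two vertex sequences $(B,A',C,D)$ and $(A',C,D,B)$ differ only by a cyclic shift, hence describe the same four points traversed in the same cyclic order, and therefore the same parallelogram; in particular the two pairs of opposite sides $\{BA',\,CD\}$ and $\{A'C,\,BD\}$ agree under both labelings. There is no real obstacle, since the conclusion is a re-indexing of the hypothesis. The reason for stating Lemma \ref{lemma_parallelogramACDB} in both forms is presumably that in the proof of Theorem \ref{theorem_concurrentG} the two readings emphasize different pairs of parallel sides of the same figure, and the similarity hypothesis on the triangles $A'CB$, $B'AC$, $C'BA$ is kept only so that the lemma can be invoked in the exact configuration that arises there.
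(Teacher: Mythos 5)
You are right that, read literally, $BA'CD$ and $A'CDB$ are the same four points in the same cyclic order, so the printed conclusion merely restates the hypothesis. But that observation should have been a red flag rather than a proof: a lemma whose conclusion is its own hypothesis, carrying an unused similarity assumption on $A'CB$, $B'AC$, $C'BA$ and two unused points $B'$, $C'$, is a mis-statement, not a triviality. The paper's own argument and the place where the lemma is applied (Lemma \ref{lemma_centroid_ZAZBZC}, where from the parallelogram $BZ_ACZ'_A$ one concludes that $AZ_CZ'_AZ_B$ is also a parallelogram) show what is actually meant: if $D$ is the fourth vertex of the parallelogram $BA'CD$, i.e.\ $D=B+C-A'$ in position vectors, then $AC'DB'$ is a parallelogram as well, i.e.\ $D=B'+C'-A$. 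Equivalently, the three directly similar triangles erected on the sides do not move the centroid: $A'+B'+C'=A+B+C$. This genuinely requires the similarity hypothesis. Writing the common similarity ratio as a complex number $k$ with $\frac{C-A'}{B-A'}=\frac{A-B'}{C-B'}=\frac{B-C'}{A-C'}=k$, one gets $A'=\frac{kB-C}{k-1}$ and cyclically, and summing the three expressions gives the identity; the paper instead proves it synthetically by showing triangle $DBC$ directly similar to $A'CB$ after a chain of similarity and directed-angle computations involving $B'$ and $C'$.

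Your proposal therefore has a real gap: it verifies only the vacuous literal reading and explicitly dismisses the similarity hypothesis as decorative, so it establishes nothing that Lemma \ref{lemma_centroid_ZAZBZC} can use. To repair it you must prove $B+C-A'=B'+C'-A$, either by the centroid computation above or by the paper's similarity chase.
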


\begin{center}
\begin{figure}[htbp]
\includegraphics[scale=.5]{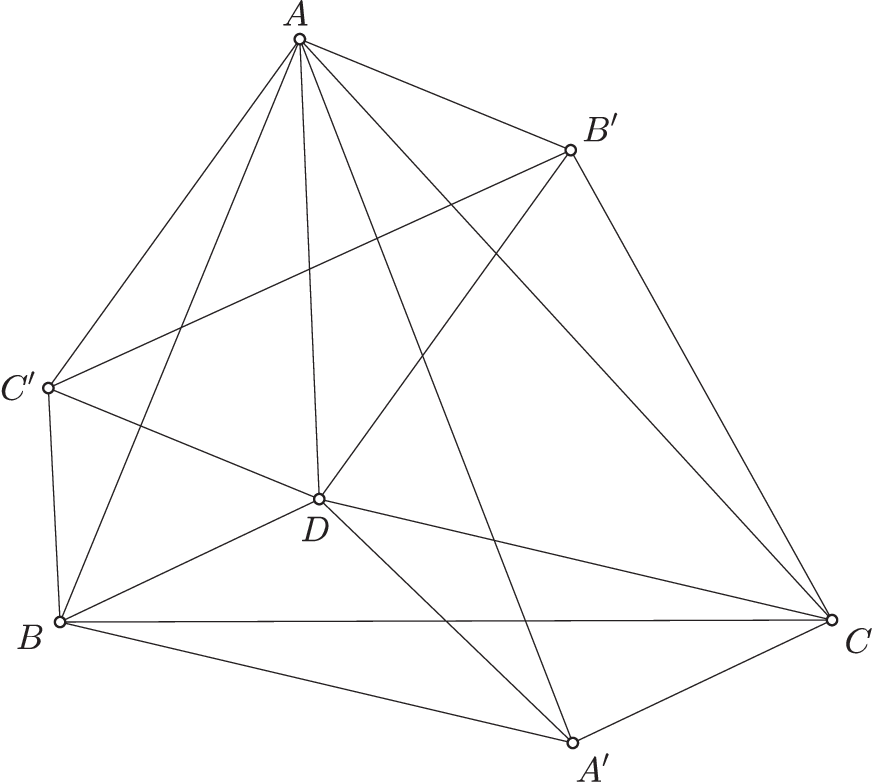}
\caption{Proof of Lemma \ref{lemma_parallelogramACDB}}
\end{figure}
\end{center}

\begin{proof}
We have
$$\frac{C'B}{C'D}=\frac{C'B}{AB'}=\frac{C'A}{B'C}=\frac{B'D}{B'C}$$
and
$$\begin{aligned}(C'B, C'D) &=(C'B, C'A)+(C'A, C'D)\\
&=(B'A, B'C)+(B'D, B'A)=(B'D, B'C).
\end{aligned}$$
Therefore triangle $C'BD$ and triangle $B'DC$ are homothetic. Hence
$$\frac{BD}{CD}=\frac{C'B}{B'D}=\frac{C'B}{C'A}=\frac{A'B}{A'C}.$$
On the other hand,
$$\begin{aligned} (DB, DC) & =(DB, DC')+(DC', DB')+(DB', DC) \\ & =(DC, B'C)+(B'A, DB')+(DB', DC) \\ & =(B'A, B'C) \\ & =(A'C, AB). \end{aligned}$$
So triangle $DBC$ and triangle $A'CB$ are homothetic. Therefore, $BD \parallel A'C$ and $CD \parallel A'B$. We conclude that $A'CDB$ is a parallelogram.
\end{proof}

\begin{lemma}
\label{lemma_centroid_ZAZBZC}
$G$ is the centroid of triangle $Z_AZ_BZ_C$.
\end{lemma}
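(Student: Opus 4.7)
The plan is to show that $Z_A+Z_B+Z_C=A+B+C$ as position vectors, which is equivalent to saying the two triangles share the same centroid $G$. The key geometric input is that each of $Z_A,Z_B,Z_C$ is the apex of an isosceles triangle built on the opposite side of $ABC$, with both base angles equal to the Brocard angle $\omega$.

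I would first record this isosceles structure. Since $Z_A = BZ_1\cap CZ_2$ and, by the defining property of the Brocard points, $\angle Z_1BC=\angle Z_2CB=\omega$, the triangle $BZ_AC$ has both base angles equal to $\omega$; hence $Z_A$ lies on the perpendicular bisector of $BC$, on the same side as $A$, at distance $\tfrac{1}{2}|BC|\tan\omega$ from the midpoint of $BC$. The same argument places $Z_B$ and $Z_C$ on the perpendicular bisectors of $CA$ and $AB$ at the analogous heights. Next, I would fix a counterclockwise traversal of $ABC$ and let $R$ denote the $90^{\circ}$ rotation that carries each directed side into its inward-pointing perpendicular of equal length. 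Then
\[
Z_A=\tfrac{B+C}{2}+\tfrac{\tan\omega}{2}R(\vec{BC}),\qquad Z_B=\tfrac{C+A}{2}+\tfrac{\tan\omega}{2}R(\vec{CA}),\qquad Z_C=\tfrac{A+B}{2}+\tfrac{\tan\omega}{2}R(\vec{AB}).
\]
Adding the three identities, the midpoint parts contribute $A+B+C$, while the rotation parts collapse, by linearity, into $\tfrac{\tan\omega}{2}R(\vec{BC}+\vec{CA}+\vec{AB})=\vec{0}$.

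The only delicate point is the orientation set-up: one must check that a single rotation $R$ yields \emph{all three} inward normals simultaneously (rather than, say, two inward and one outward), so that the three rotation terms really assemble into a single $R$ applied to the sum of edge vectors. This is handled at the start by fixing a consistent counterclockwise traversal of $ABC$, under which the inward side of each edge is uniformly on the left. Once this is arranged, the proof collapses to the tautology $\vec{BC}+\vec{CA}+\vec{AB}=\vec{0}$, giving $Z_A+Z_B+Z_C=A+B+C$ and hence the coincidence of centroids at $G$.
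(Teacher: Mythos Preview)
Your argument is correct. The identification $Z_A=BZ_1\cap CZ_2$ together with $\angle Z_1BC=\angle Z_2CB=\omega$ does make $BZ_AC$ isosceles with apex $Z_A$, so your vector formula $Z_A=\tfrac{B+C}{2}+\tfrac{\tan\omega}{2}\,R(\vec{BC})$ holds, and the telescoping $\vec{BC}+\vec{CA}+\vec{AB}=\vec0$ finishes it cleanly. Your orientation remark is exactly the right caveat and is handled correctly.

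The paper proves the same lemma by a purely synthetic route that is genuinely different from yours. It reflects $Z_A$ across $BC$ to a point $Z'_A$; since $BZ_AC$ is isosceles, $BZ_ACZ'_A$ is a rhombus, so $M_{BC}$ is the midpoint of $Z_AZ'_A$. Then it invokes the earlier parallelogram lemma (Lemma~\ref{lemma_parallelogramACDB}), applied to the three similar isosceles triangles $Z_ABC$, $Z_BCA$, $Z_CAB$, to conclude that $AZ_CZ'_AZ_B$ is a parallelogram, whence the midpoint $N_{BC}$ of $Z_BZ_C$ is also the midpoint of $AZ'_A$. In the auxiliary triangle $AZ_AZ'_A$ the segments $AM_{BC}$ and $Z_AN_{BC}$ are therefore medians, and their intersection (the centroid of $AZ_AZ'_A$) lies at the $\tfrac{2}{3}$--point of $AM_{BC}$, namely $G$; but $Z_AN_{BC}$ is also the $Z_A$--median of $Z_AZ_BZ_C$, so its $\tfrac{2}{3}$--point is $G$ as well. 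Compared with this, your vector computation is shorter, self-contained, and avoids the auxiliary Lemma~\ref{lemma_parallelogramACDB}; the paper's approach, on the other hand, stays within the synthetic idiom of the surrounding text and gives that lemma a concrete application.
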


\begin{center}
\begin{figure}[htbp]
\includegraphics[scale=.5]{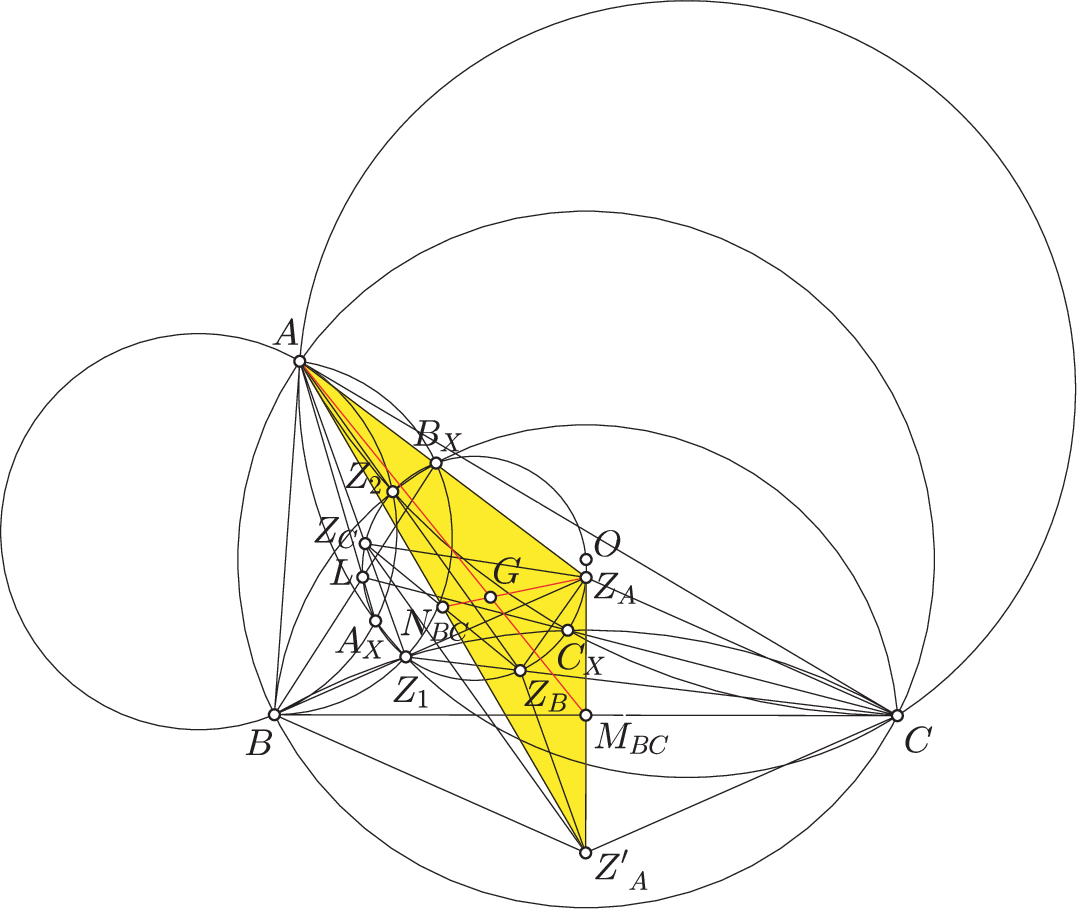}
\caption{Proof of Lemma \ref{lemma_centroid_ZAZBZC}}
\end{figure}
\end{center}

\begin{proof}
Let $Z'_A$ be symmetric point with $Z_A$ through $BC$. Since triangle $BZ_AC$ is a isosceles triangle at $Z_A$, $BZ_ACZ'_A$ is a parallelogram. Applying Lemma \ref{lemma_parallelogramACDB} for triangle $ABC$, we have $AZ_CZ'_AZ_B$ is a parallelogram so $AZ'_A$ passes midpoint $N_{BC}$ of $Z_BZ_C$. In triangle $AZ_AZ'_A$, the two median lines $AM_{BC}$ and $Z_AN_{BC}$ intersect with each other at this triangle's centroid so that this point is also the centroid $G$ of triangle $ABC$ and triangle $Z_AZ_BZ_C$.
\end{proof}

\begin{lemma}
\label{lemma_conccurentXXYYZZ}
Given triangle $ABC$ and two disjoint arbitrary points $P$, $Q$. Let $X$, $Y$, $Z$ respectively be projection of $P$ on the lines which passes $Q$ and is perpendicular to $BC$, $CA$, $AB$. Let $X'$, $Y'$, $Z'$ respectively be projection of $Q$ on $AP$, $BP$, $CP$. Prove that $XX'$, $YY'$ and $ZZ'$ are concurrent.
\end{lemma}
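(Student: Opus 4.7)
The plan is to place all six points on a common circle and then apply Desargues' involution theorem to reduce the three-line concurrence to a single determinant identity.

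My first step will be to show that $X, Y, Z, X', Y', Z'$ all lie on the circle $\omega$ with diameter $PQ$. For $X$: since $X$ is the foot of the perpendicular from $P$ onto a line through $Q$ perpendicular to $BC$, both $QX \perp BC$ and $PX \perp QX$ hold, so $\angle PXQ = \pi/2$. For $X'$: directly $QX' \perp PX'$, so $\angle PX'Q = \pi/2$. The same arguments cover $Y, Y', Z, Z'$.

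Next I would invoke the classical corollary of Desargues' involution theorem: three chords of a conic are concurrent if and only if their three pairs of endpoints form an involution on the conic. Placing $P$ at the origin of $\mathbb{C}$, I parameterize $\omega$ by $t_R := R/\overline{R}$. Writing $b_a = C-B$, $p_a = A-P$, $c_a = b_a p_a$, with cyclic analogues $b_b, p_b, c_b$ and $b_c, p_c, c_c$, a short computation gives
$$t_X\,t_{X'} = \frac{c_a}{\overline{c_a}}, \qquad t_X + t_{X'} = \frac{2\,b_a\cdot p_a}{\overline{c_a}},$$
where ``$\cdot$'' denotes the real dot product, with analogous formulas for the other two pairs.

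The involution criterion is the vanishing of the standard $3 \times 3$ determinant built from $t+t'$, $tt'$, and $1$. Clearing denominators by multiplying row $i$ by $\overline{c_i}$ reduces it, up to a nonzero factor, to
$$\det\begin{pmatrix} 2\,b_a\cdot p_a & c_a & \overline{c_a} \\ 2\,b_b\cdot p_b & c_b & \overline{c_b} \\ 2\,b_c\cdot p_c & c_c & \overline{c_c} \end{pmatrix}.$$
The two ingredient identities are $c_a + c_b + c_c = 0$ and $b_a\cdot p_a + b_b\cdot p_b + b_c\cdot p_c = 0$, each a one-line expansion using commutativity of complex multiplication and of the dot product respectively. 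Together with the conjugate of the first, these make every column of the matrix column-sum to zero, forcing the rows to be linearly dependent, so the determinant vanishes. The main obstacle I expect is the book-keeping in the parameterization of $\omega$ --- the angle parameter is only defined modulo $\pi$, so only the symmetric combinations $t+t'$ and $tt'$ are legitimate invariants, and the involution criterion must be expressed purely in these; once that is arranged, Desargues' theorem does the geometric work and the two symmetric identities finish the proof.
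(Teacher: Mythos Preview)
Your argument is correct. The six points do lie on the circle with diameter $PQ$; the parameter $t_R = R/\overline{R}$ is a genuine projective coordinate on that circle (it is a M\"obius function of the slope $\tan\arg R$, which is the parameter obtained by projecting from $P$); your formulas for $t_Xt_{X'}$ and $t_X+t_{X'}$ check out; and the two ``sum to zero'' identities make every column of the $3\times 3$ matrix sum to zero, so the determinant vanishes and the involution criterion applies. Minor remark: your worry about ``modulo $\pi$'' is unnecessary here, since $t_R$ is single-valued on $\omega\setminus\{P\}$; the only genuine caveats are easily excluded degeneracies such as $AP\parallel BC$.

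The paper takes a different and more synthetic route. After the same observation that $X,Y,Z,X',Y',Z'$ lie on the circle with diameter $PQ$, it notes that $PX\parallel BC$, $PY\parallel CA$, $PZ\parallel AB$, whence a short angle chase shows that triangle $XYZ$ is directly similar to triangle $ABC$. Letting $P'$ be the image of $P$ under that similarity and $P^*$ its isogonal conjugate in $XYZ$, the paper checks $(XX',XZ)=(PX',PZ)=(AP,AB)=(XP^*,XZ)$, so $XX'$ passes through $P^*$, and symmetrically for $YY'$ and $ZZ'$. Thus the paper not only proves concurrence but names the point. Your involution argument is more mechanical and transfers readily to other ``three chords on a circle'' situations, but it does not identify the concurrency point; the paper's approach is shorter, purely angle-based, and yields the extra information that the common point is the isogonal conjugate, inside $XYZ$, of the similarity image of $P$.
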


\begin{center}
\begin{figure}[htbp]
\includegraphics[scale=.4]{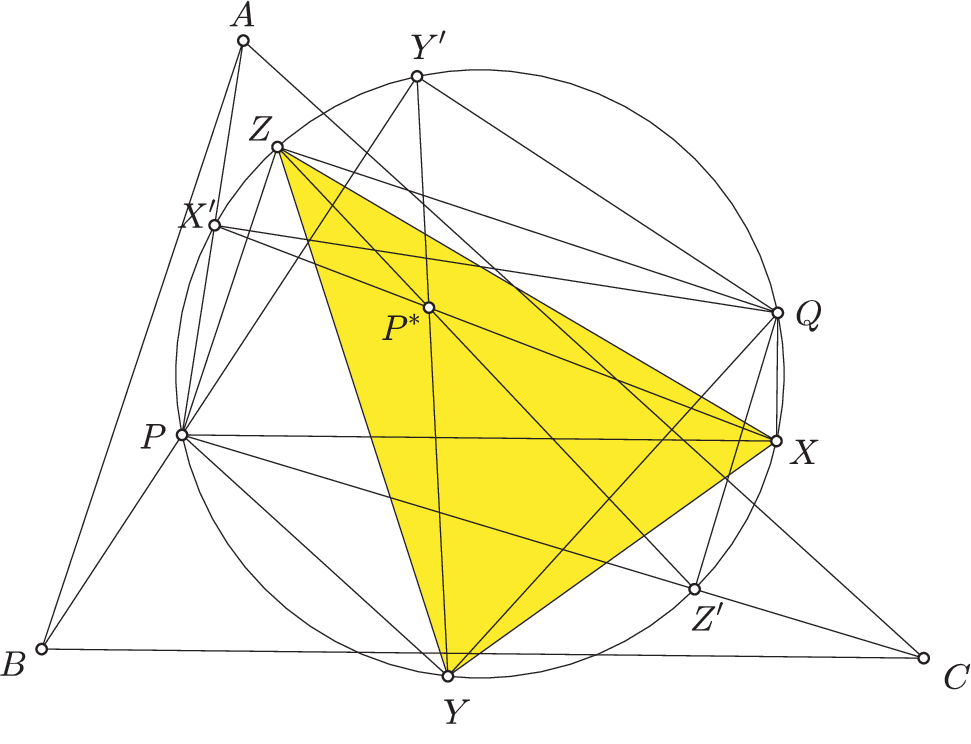}
\caption{Proof of Lemma \ref{lemma_conccurentXXYYZZ}}
\end{figure}
\end{center}

\begin{proof}
Noted that $X$, $Y$, $Z$, $X'$, $Y'$, $Z'$ lie on a circle with diameter $PQ$ and $AB \parallel PZ$, $AC \parallel PY$ so we have
$$(AB, AC)=(ZP, PY)=(XY, XZ).$$
Similarly, $(BC, BA)=(YZ, YX)$ and $(CA, CB)=(ZX, ZY)$ so two triangles $XYZ$ and $ABC$ are homothetic. Let $P'$ be the image of $P$ through the homothety transformation which turns triangle $ABC$ into triangle $XYZ$. Let $P^*$ be the isogonal conjugate point of $P$ in triangle $XYZ$. We have
$$(XX', XZ)=(PX', PZ)=(AP, AB)=(XP^*, XZ).$$
Therefore $XX'$ passes $P^*$. Similarly, $YY'$ and $ZZ'$ passes $P^*$, which solves the lemma.
\end{proof}

\begin{center}
\begin{figure}[htbp]
\includegraphics[scale=.41]{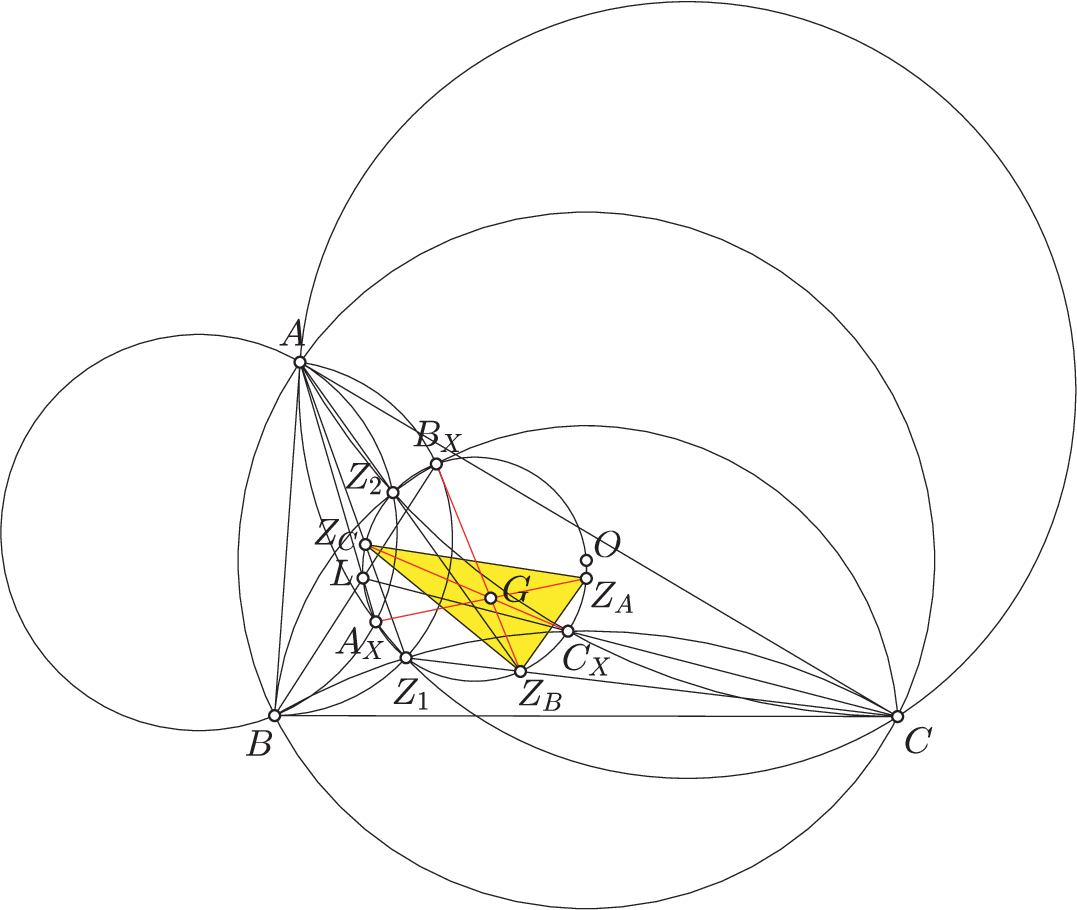}
\caption{Proof of Theorem \ref{theorem_concurrentG}}
\end{figure}
\end{center}

Back to Theorem \ref{theorem_concurrentG},

\begin{proof}
By applying Lemmas \ref{lemma_centroid_ZAZBZC} and \ref{lemma_conccurentXXYYZZ} for triangle $Z_AZ_BZ_C$, we solves the problem.
\end{proof}

By Corollary \ref{cor_AX_sym_AG}, the Brocard circle is definitely the Hagge circle \cite{hagge} of $G$ in triangle $M_{BC}M_{CA}M_{AB}$. We will discuss some properties related to this issue and use these ones to prove Theorem \ref{theorem_concurrentG}.

\begin{lemma}[\bf Hagge circle]
\label{lemma_hagge}
Given triangle $ABC$, orthocenter $H$ and a point $P$ lies inside $(ABC)$. Rays $AP$, $BP$, $CP$ cuts $(ABC)$ at the second points $A_1$, $B_1$, $C_1$. Let $A_2$, $B_2$, $C_2$ respectively be the symmetry points of $A_1$, $B_1$, $C_1$ through $A$, $B$, $C$. Prove that $A_2$, $B_2$, $C_2$ and $H$ are cyclic.
\end{lemma}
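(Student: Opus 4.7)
The plan is to follow the template of Hagge-type proofs: identify the circle through $A_2, B_2, C_2$ as one that also passes through $H$, by exhibiting $H$ as a common point of several auxiliary circles. The natural first move is to introduce $P^{*}$, the isogonal conjugate of $P$ in triangle $ABC$, since circle-through-$H$ results of this kind are almost always governed by $P^{*}$.

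I would first translate the midpoint conditions $A=\tfrac12(A_1+A_2)$, $B=\tfrac12(B_1+B_2)$, $C=\tfrac12(C_1+C_2)$ into directed-angle relations. Reflecting the circumcircle through $A$ gives a congruent circle $\omega_A$ centred at $2A-O$ which contains $A_2$; define $\omega_B$ and $\omega_C$ analogously. Chasing inscribed angles on $(ABC)$ and transporting them through the midpoint reflections to $\omega_A, \omega_B, \omega_C$, the target is an identity of the form
$$(HA_2, HB_2) \equiv (C_2 A_2, C_2 B_2) \pmod{\pi},$$
which is exactly the asserted cyclicity.

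To organize the angle chase, I would apply Mannheim's theorem (Lemma~\ref{lemma_mannheim}) to triangle $A_1 B_1 C_1$ with cevians $A_1 A_2$, $B_1 B_2$, $C_1 C_2$. These three lines all pass through $P$ (since $A_1 A_2$ coincides with the line $A A_1$, which passes through $P$), so the concurrence hypothesis is satisfied. Mannheim then forces a Miquel-type fourth concyclic point, and the remaining task is to identify it with $H$. I would try to bridge the "vertex-midpoint" data and the orthocentric data by introducing the antipodes $A^{*}, B^{*}, C^{*}$ of $A, B, C$ on $(ABC)$ together with the classical identity that the midpoint of $BC$ is also the midpoint of $HA^{*}$, which converts the condition $A=\tfrac12(A_1+A_2)$ into a relation involving $H$.

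The main obstacle is precisely this final identification. In the classical side-reflection Hagge configuration, the appearance of $H$ is forced immediately by the fact that the reflection of $H$ across a side of the triangle lies on the circumcircle; no such automatic statement is available for reflection through a vertex. Consequently, the auxiliary antipodes $A^{*}, B^{*}, C^{*}$ and a careful book-keeping of directed angles---linking them to $\omega_A, \omega_B, \omega_C$ on one side and to $P^{*}$ on the other---will be the technically delicate heart of the proof.
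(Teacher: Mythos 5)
Your proposal stalls exactly where the content of the lemma lives, and the tool you reach for cannot close the gap. You acknowledge that the ``final identification'' of the fourth concyclic point with $H$ is the main obstacle and you leave it unresolved; but note also that Mannheim's theorem (Lemma~\ref{lemma_mannheim}) is structurally the wrong instrument here. In that lemma the fourth point on the circle through $A'$, $B'$, $C'$ is the concurrence point $K$ of the three cevians, so applying it to triangle $A_1B_1C_1$ with cevians $A_1A_2$, $B_1B_2$, $C_1C_2$ through $P$ could only ever yield that $A_2$, $B_2$, $C_2$, $P$ are concyclic --- which is not the assertion and is false in general --- and you never exhibit the points $L$, $M$, $N$ and circles $(A_1MN)$, etc., that its hypotheses require. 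There is also an interpretation problem that becomes a mathematical one: you read ``symmetry through $A$, $B$, $C$'' as the point reflections $A_2=2A-A_1$, and under that reading the statement is false. Take $P=O$ (origin at $O$): then $A_2=3A$, $B_2=3B$, $C_2=3C$, so $(A_2B_2C_2)$ is the circle of radius $3R$ centred at $O$, while $OH^2=9R^2-(a^2+b^2+c^2)<9R^2$ for every nondegenerate triangle. The paper's own proof and all later uses of the lemma (e.g.\ Corollary~\ref{cor_AY_sym_AS} feeding Lemma~\ref{lemma_lemoine_sub}) show that $A_2$ is the reflection of $A_1$ in the \emph{line} $BC$ (sides indexed by their opposite vertices) --- the classical Hagge configuration, whose relevance you correctly sense but then set aside.

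For comparison, the paper's argument introduces the isogonal conjugate $P'$ of $P$ and the second intersections $A_1'$, $B_1'$, $C_1'$ of $AP'$, $BP'$, $CP'$ with $(ABC)$. The midpoints of the chords $AA_1'$, $BB_1'$, $CC_1'$ are the projections of $O$ on those chords, hence lie on the circle with diameter $OP'$ (the mechanism of Lemma~\ref{lemma_XYZ}). The composite map $\mathbf{S}_{M_{BC}}\circ\mathbf{H}_A^2$, namely $X\mapsto A+B+C-2X$, is the homothety of centre $G$ and ratio $-2$; it sends $O$ to $H$, and --- using that the reflection of $A_1$ in the line $BC$ coincides with the reflection of $A_1'$ in the midpoint $M_{BC}$, together with a parallelogram argument identifying the other two chord midpoints --- it sends the three midpoints to $A_2$, $B_2$, $C_2$. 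Concyclicity is then immediate. The two ingredients your outline is missing are precisely these: the isogonal conjugate must enter through the circle on diameter $OP'$, and the passage from that circle to $(A_2B_2C_2H)$ is a single homothety rather than an angle chase.
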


\begin{rem}
The circle mentioned above called Hagge circle $\mathcal{H}_P$ of point $P$ with respect to triangle $ABC$.
\end{rem}

\begin{center}
\begin{figure}[htbp]
\includegraphics[scale=.5]{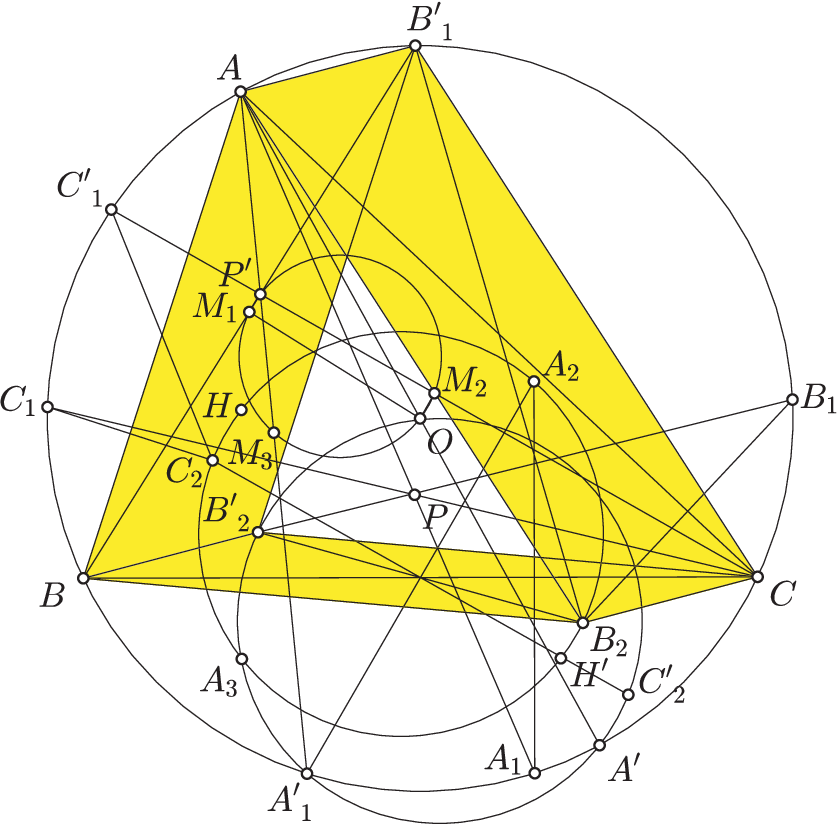}
\caption{Proof of Lemma \ref{lemma_hagge}}
\end{figure}
\end{center}

\begin{proof}
Let $P'$ be the isogonal conjugate of $P$ in triangle $ABC$. $AP'$, $BP'$, $CP'$ cut $(ABC)$ respectively at $A'_1$, $B'_1$, $C'_1$. Let $B'_2$, $C'_2$ respectively be symmetry points of $B_2$, $C_2$ through midpoint of $BC$. Set $O$ is the center of $(ABC)$ with diameter $AA'$. It is easy to see that $A_2$, $B_2$, $C_2$ are symmetry points of $A'_1$, $B'_2$, $C'_2$ through midpoint of $BC$. We have $CB_2AB'_1$ and $CB_2BB'_2$ are parallelograms so the quadrilateral $ABB'_2B'_1$ is also a parallelogram. We conclude that midpoint $M_1$ of $AB'_2$ is the projection of $O$ on $BP'$. Similarly, midpoint $M_2$ of $AC'_2$ is the projection of $O$ on $CP'$. Let $M_3$ be midpoint of $AA'_1$. Then triangle $M_1M_2M_3$ is inscribed in a circle with diameter $OP'$. We use two geometric transformations respectively with the first one (1) is the homothety transformation $\mathbf{H}_A^2$ with center $A$ and ratio 2; and the later (2) is the symmetric one $\mathbf{S}_{M_{BC}}$ through midpoint $M_{BC}$ of $BC$, to points $M_3$, $M_1$, $M_2$, $O$ such that
$$\mathbf{S}_{M_{BC}} \circ \mathbf{H}_{A}^{2}\colon M_3, M_1, M_2, O \mapsto A_2, B_2, C_2, H.$$
Since $M_1, M_2, M_3, O$ lie on a same circle so $A_2, B_2, C_2, H$ are also cyclic.
\end{proof}

\begin{lemma}
\label{lemma_ratio}
Given triangle $ABC$ and two points $S$ and $S'$ are isogonal conjugate. Let $X$, $X'$ respectively be intersection of $AS$, $AS'$ with $(ABC)$. Let $V$ be intersection of $AS'$ and $BC$. Prove that
$$\frac{\overline{AS}}{\overline{SX}}=\frac{\overline{S'V}}{\overline{VX'}}.$$
\end{lemma}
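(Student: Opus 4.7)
The plan is to evaluate both sides as the same explicit product of four sines, via two parallel applications of the Ratio Lemma at vertex $B$. Introduce the angle notation $\alpha=\angle BAS=\angle CAS'$, $\beta=\angle CAS=\angle BAS'$, $\gamma=\angle ABS=\angle CBS'$, where the second equalities in each pair are precisely the defining property of isogonal conjugation.

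For $\frac{AS}{SX}$, the idea is to regard $BS$ as a cevian from $B$ in $\triangle ABX$ meeting side $AX$ at $S$. The Ratio Lemma then gives
\[
\frac{AS}{SX}=\frac{AB\,\sin\angle ABS}{BX\,\sin\angle SBX}.
\]
The angle $\angle SBX$ is handled by the inscribed-angle identity $\angle CBX=\angle CAX=\beta$, which yields $\angle ABX=B+\beta$ and hence $\angle SBX=B+\beta-\gamma$. A single invocation of the law of sines in $\triangle ABX$ replaces $AB/BX$ by $\sin C/\sin\alpha$, so the left-hand side reduces to a clean product involving $\sin\alpha,\sin\gamma,\sin C,\sin(B+\beta-\gamma)$.

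For $\frac{S'V}{VX'}$ the key observation is that $S'$, $V$, $X'$ are collinear on the line $AS'$, so $BV$ may be viewed as a cevian from $B$ in $\triangle BS'X'$ meeting side $S'X'$ at $V$. The isogonal identity $\angle S'BV=\angle S'BC=\gamma$ and the inscribed-angle identity $\angle CBX'=\angle CAX'=\alpha$ furnish the two relevant angles, and the Ratio Lemma gives $\frac{BS'\sin\gamma}{BX'\sin\alpha}$. Computing $BS'$ in $\triangle ABS'$ and $BX'$ in $\triangle ABX'$ by the law of sines yields $BS'/BX'=\sin C/\sin(B+\beta-\gamma)$, so the right-hand side collapses to exactly the same product of sines as the left.

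The main obstacle I anticipate is not the computation but the angle bookkeeping: making sure that the isogonal swap at $B$ is applied with the correct orientation (so that $\angle S'BV$ is $\gamma$ and not $B-\gamma$), and confirming that $S'$ lies between $A$ and $V$ while $V$ lies between $S'$ and $X'$, so that the unsigned ratios produced by the Ratio Lemma agree with the signed ratios appearing in the statement.
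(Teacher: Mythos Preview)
Your proof is correct and the trigonometric computation goes through exactly as you outline: both sides reduce to $\dfrac{\sin C\,\sin\gamma}{\sin\alpha\,\sin(B+\beta-\gamma)}$. The only loose end is the one you already flagged, namely matching the unsigned Ratio-Lemma output with the signed ratios in the statement; this is routine once one fixes a configuration (e.g.\ $S,S'$ interior to the triangle), or one may simply rerun the argument with directed angles and signed lengths.

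The paper takes a different, purely synthetic route. Instead of expanding both ratios in sines, it identifies two pairs of similar triangles: $\triangle VCX'\sim\triangle CAX$ (from $(CV,CX')=(AX,AC)$ and $(X'V,X'C)=(XA,XC)$) and $\triangle CX'S'\sim\triangle SXC$ (from $(S'X',S'C)=(CS,CX)$ and $(X'S',X'C)=(XS,XC)$). Multiplying the resulting proportions gives $\overline{X'V}\cdot\overline{XA}=\overline{X'C}\cdot\overline{XC}=\overline{X'S'}\cdot\overline{XS}$, and the lemma follows at once. The paper's argument is shorter and sidesteps trigonometry entirely, but requires spotting the two similarities; your Ratio-Lemma approach is more mechanical and would generalize more readily to variants where the similar-triangle structure is less visible. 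Both are perfectly valid.
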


\begin{center}
\begin{figure}[htbp]
\includegraphics[scale=.5]{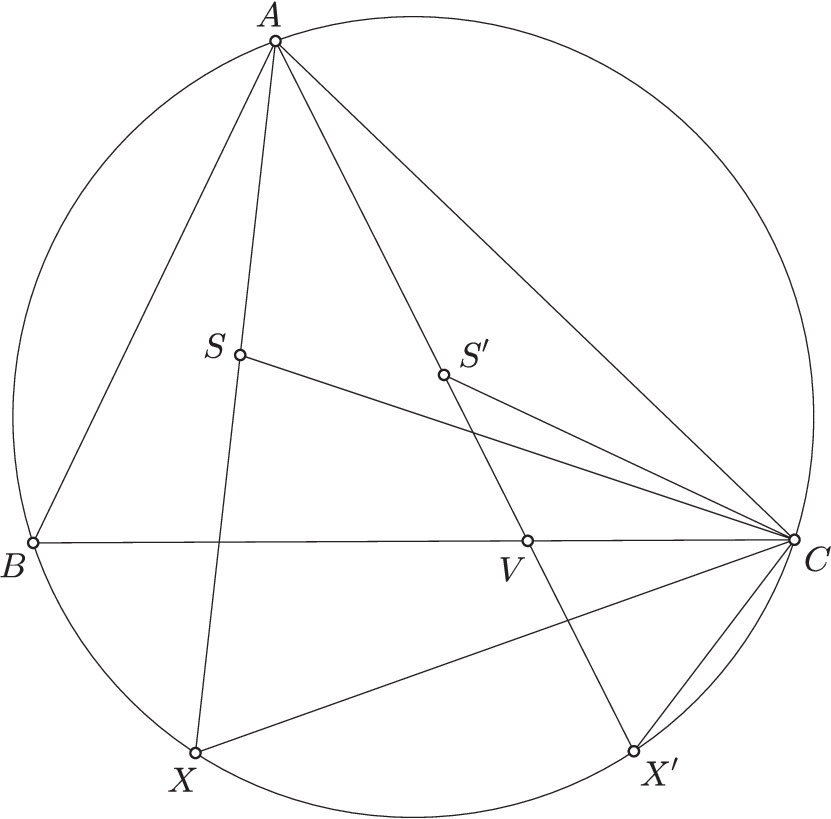}
\caption{Proof of Lemma \ref{lemma_ratio}}
\end{figure}
\end{center}

\begin{proof}
We have
$$(CV, CX')=(AB, AX')=(AX, AC),$$
$$(X'V, X'C)=(XA, XC)$$
So two triangles $VCX'$ and $CAX$ are homothetic. On the other hand,
$$\begin{aligned} (S'X', S'C) & =(AS', AC)+(CA, CS') \\ & =(AB, AS)+(CS, CB) \\ & =(CB, CX)+(CS, CB) \\ & =(CS, CX), \end{aligned}$$
$$(X'S', X'C)=(XS, XC).$$
Therefore two triangles $CX'S'$ and $SXC$ are homothetic. So, $$\overline{X'V}\cdot\overline{XA}=\overline{X'C}\cdot\overline{XC}=\overline{X'S'}\cdot\overline{XS}.$$
Hence, $\frac{\overline{XA}}{\overline{XS}}=\frac{\overline{X'S'}}{\overline{X'V}}$ or $\frac{\overline{AS}}{\overline{SX}}=\frac{\overline{S'V}}{\overline{VX'}}$.
\end{proof}

\begin{lemma}
\label{lemma_P_concurrent}
Let $A_3$, $B_3$, $C_3$ respectively be intersection of $\mathcal{H}_P$ and $AH$, $BH$, $CH$. Prove that $A_2A_3$, $B_2B_3$, $C_2C_3$ are concurrent at $P$.
\end{lemma}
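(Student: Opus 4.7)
The plan is to transport the problem via the homothety $T$ introduced in the proof of Lemma \ref{lemma_hagge}. Recall that $T$ is a homothety of ratio $-2$ centered at the centroid $G$ which carries the circle $\omega$ of diameter $OP'$ bijectively onto $\mathcal{H}_P$, with $O\mapsto H$ and $M_3, M_1, M_2 \mapsto A_2, B_2, C_2$ (where $P'$ is the isogonal conjugate of $P$, and $M_3, M_1, M_2$ are the projections of $O$ onto $AP', BP', CP'$). Under $T^{-1}$, the vertices $A, B, C$ are carried to the midpoints of the opposite sides, and $H$ is carried to $O$; consequently each altitude $AH, BH, CH$ is sent to the perpendicular bisector of the corresponding opposite side. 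Thus $A_3 = \mathcal{H}_P \cap AH \setminus \{H\}$ corresponds under $T^{-1}$ to the second intersection $A_3^*$ of the perpendicular bisector $d_A$ of $BC$ with $\omega$; since $OP'$ is a diameter of $\omega$ and $O\in d_A$, the right-angle condition $\angle OA_3^*P' = \pi/2$ identifies $A_3^*$ as the foot of the perpendicular from $P'$ to $d_A$. Analogous descriptions hold for $B_3^*$ and $C_3^*$.

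With this reduction in hand, I would invoke Lemma \ref{lemma_conccurentXXYYZZ} with its pair of points taken as $(P', O)$: its projections $X, Y, Z$ (of the first point onto the lines through the second perpendicular to the sides) become exactly $A_3^*, B_3^*, C_3^*$, and its $X', Y', Z'$ (projections of the second point onto the cevians from the vertices through the first) become exactly $M_3, M_1, M_2$. The lemma yields the concurrency of the three chords $M_3 A_3^*,\ M_1 B_3^*,\ M_2 C_3^*$ of $\omega$ at a common point $K^*$, and transport via $T$ gives the desired concurrency of $A_2 A_3,\ B_2 B_3,\ C_2 C_3$ at the point $T(K^*)$.

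The main obstacle is to identify $T(K^*) = P$, i.e., to show that the concurrency is precisely at $P$. The cleanest route is to prove one of the three collinearities independently; the already-established concurrency then pins the common point as $P$. For the single collinearity $A_2, P, A_3$, one may exploit the fact that $A_1 A_2 \perp BC$ (by the definition of $A_2$ as the reflection of $A_1$ in $BC$) and $H A_3 \perp BC$ (because $H A_3$ lies along the altitude from $A$), so $A_1 A_2 \parallel H A_3$; combining this parallelism with the collinearities $A, P, A_1$ (definition of $A_1$) and $A, H, A_3$ (altitude) and performing a short similar-triangles or inscribed-angle chase on the Hagge circle should yield the required collinearity $A_2, P, A_3$, completing the proof.
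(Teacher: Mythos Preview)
Your reduction via the homothety $T$ (which indeed coincides with the paper's $\mathbf{S}_{M_{BC}}\circ\mathbf{H}_A^2$, as one checks that this composite is the homothety of ratio $-2$ centred at $G$) and the invocation of Lemma~\ref{lemma_conccurentXXYYZZ} with $(P,Q)=(P',O)$ are both correct and do yield concurrency of $A_2A_3$, $B_2B_3$, $C_2C_3$. The gap is exactly where you flag it: pinning the concurrency point at $P$. Your proposed fix---proving the single collinearity $A_2,P,A_3$ directly---is the right target, but your sketch does not close it. The parallelism $A_1A_2\parallel HA_3$ reduces that collinearity (via similar triangles through $P$ on the two parallel lines) to the ratio identity
\[
\frac{\overline{AP}}{\overline{PA_1}}=\frac{\overline{AA_3}}{\overline{A_1A_2}},
\]
and evaluating the right-hand side is not a ``short inscribed-angle chase'': one needs $A_3H'\parallel BC$ (where $H'$ is the $\mathcal{H}_P$-antipode of $H$), the identification of $H'$ as the anticomplement of $P'$ so that $\overline{AA_3}=2\,\mathrm{d}(P',BC)$, and finally the isogonal ratio relation of Lemma~\ref{lemma_ratio}. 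That is precisely the paper's argument.

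So your detour through Lemma~\ref{lemma_conccurentXXYYZZ} is sound but redundant: once the single collinearity $A_2,P,A_3$ is established by the ratio computation, the same computation with $B$, $C$ in place of $A$ gives the other two, and concurrency at $P$ follows without ever using Lemma~\ref{lemma_conccurentXXYYZZ}. If you want your route to stand as a genuinely different proof, you must identify $K^*$ intrinsically---for instance, by tracking the concurrency point produced inside the proof of Lemma~\ref{lemma_conccurentXXYYZZ} and showing it is the complement of $P$---rather than falling back on a collinearity argument that already subsumes the whole lemma.
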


\begin{center}
\begin{figure}[htbp]
\includegraphics[scale=.5]{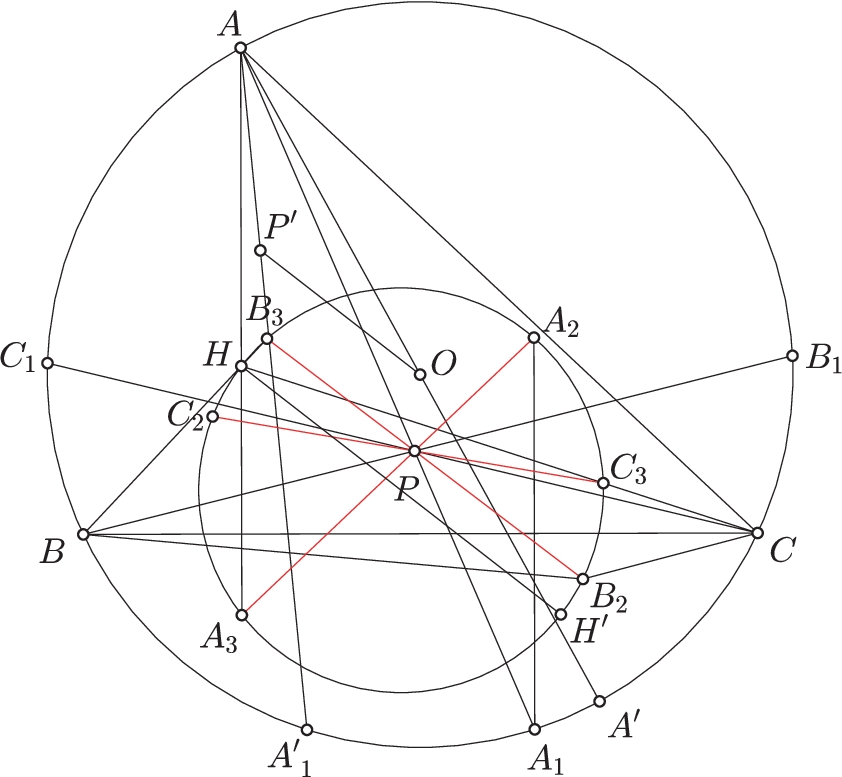}
\caption{Proof of Lemma \ref{lemma_P_concurrent}}
\end{figure}
\end{center}

\begin{proof}
We construct diameter $HH'$ of $\mathcal{H}_P$. From Lemma \ref{lemma_hagge}, we have
$$\mathbf{S}_{M_{BC}} \circ \mathbf{H}_{A}^{2}\colon O, P' \mapsto H, H'.$$
Therefore $OP' \parallel HH'$ and $\overline{H'H}=2\overline{OP'}$ so $H'$ is the Anticomplement point of $P'$ in triangle $ABC$. On the other hand, $A_3H' \parallel BC$ and $BC \perp AH$ hence $AA'=2\text{d}(P', BC)$. Using Lemma \ref{lemma_ratio}, we have
$$\frac{\overline{AP}}{\overline{PA_1}}=\frac{\text{d}(P', BC)}{\text{d}(A'_1, BC)}=\frac{\text{d}(P', BC)}{\text{d}(A_1, BC)}=\frac{\overline{AA_3}}{\overline{A_2A_1}}.$$
This means $A_2A_3$ passes $P$. Similarly, $B_2B_3, C_2C_3$ passes $P$. We have solved the lemma.
\end{proof}

Back to Theorem \ref{theorem_concurrentG},

\begin{center}
\begin{figure}[htbp]
\includegraphics[scale=.45]{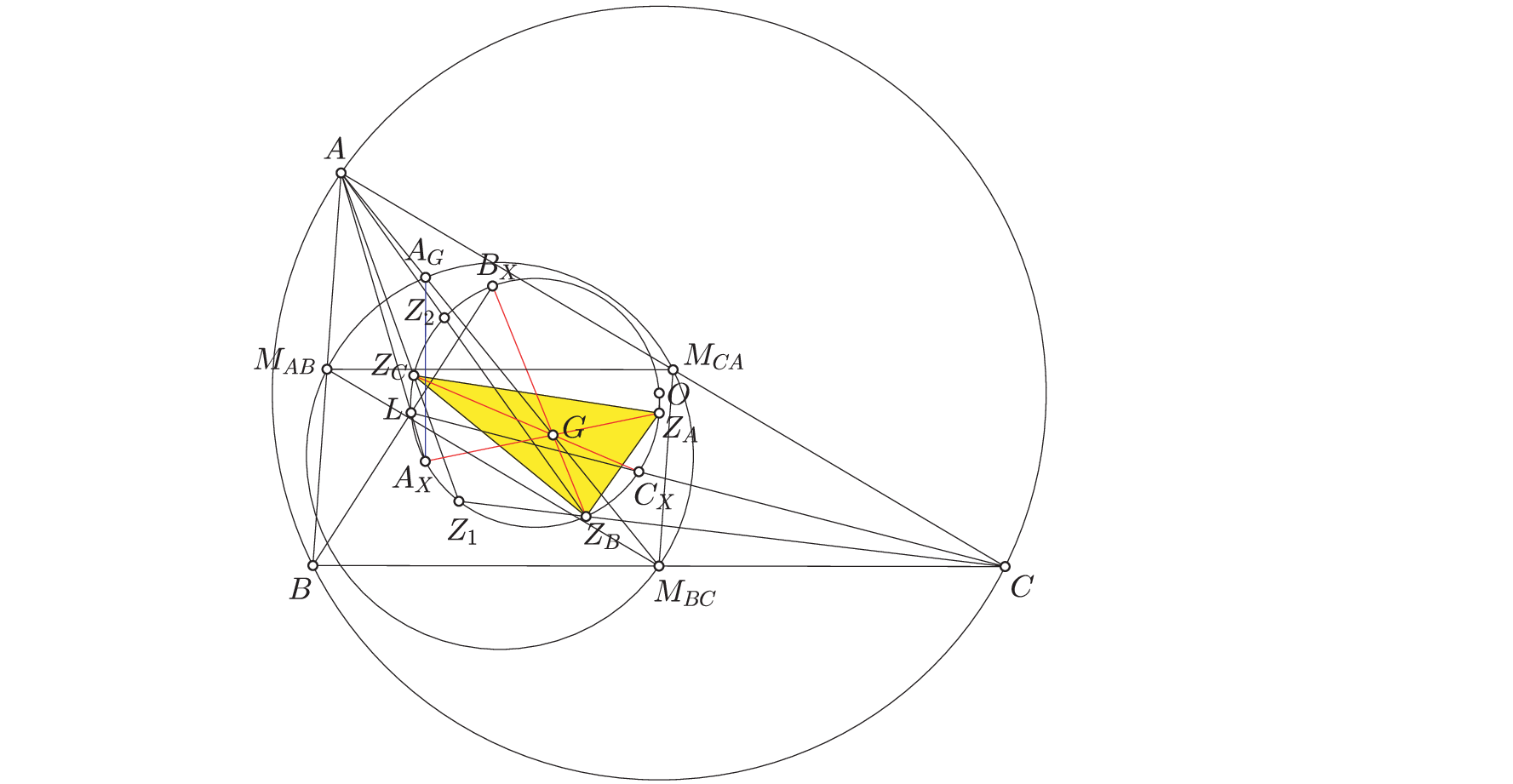}
\caption{Proof of Theorem \ref{theorem_concurrentG}}
\end{figure}
\end{center}

\begin{proof} By Corollary \ref{cor_AX_sym_AG}, the Brocard cirlce is the Hagge circle of $G$ with respect to triangle $M_{BC}M_{CA}M_{AB}$. By applying Lemma \ref{lemma_hagge} and \ref{lemma_P_concurrent} for triangle $M_{BC}M_{CA}M_{AB}$, we have the result. \end{proof}

\section{Properties related to the second problem}

\begin{theorem}
\label{theorem_AY_median}
$A_Y$ is the projection of orthocenter $H$ on the $A-$median line of triangle $ABC$.
\end{theorem}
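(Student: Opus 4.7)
The plan is to identify $A_Y$ as a vertex of a parallelogram involving the orthocenter $H$, using the parallelogram structure already established in Theorem \ref{theorem_AAY}, plus the classical fact that reflecting $H$ through $M_{BC}$ gives the antipode of $A$.

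First, I would recall from Problem \ref{problem_elmo} that $A$, $A_Y$, $M_{BC}$, $A_M$ are collinear on the $A$-median. By Theorem \ref{theorem_AAY}, $BA_YCA_M$ is a parallelogram, so its diagonals $BC$ and $A_YA_M$ bisect each other; that is, $M_{BC}$ is the midpoint of $A_YA_M$.

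Second, let $A'$ denote the antipode of $A$ on the circumcircle $(ABC)$. It is classical that $BHCA'$ is a parallelogram (since $\overrightarrow{BH}$ and $\overrightarrow{A'C}$ are both perpendicular to $AC$, etc.), hence $M_{BC}$ is also the midpoint of $HA'$. Combining the two facts, the diagonals of the quadrilateral $HA_YA'A_M$ share the common midpoint $M_{BC}$, so $HA_YA'A_M$ is itself a parallelogram. In particular $HA_Y \parallel A'A_M$.

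Third, since $AA'$ is a diameter of $(ABC)$ and $A_M$ lies on this circle, we have $\angle AA_MA' = \pi/2$, that is, $A'A_M \perp AA_M$. Combining with the parallel relation above gives $HA_Y \perp AA_M$, i.e., $HA_Y$ is perpendicular to the $A$-median line. Because $A_Y$ lies on this median, $A_Y$ is exactly the foot of the perpendicular from $H$ to $AA_M$, which is what we wanted to prove.

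There is no real obstacle here: the entire argument rests on assembling two parallelograms that share the midpoint $M_{BC}$, together with Thales' right-angle in the semicircle. The only thing to be careful about is the (degenerate) case in which $A_Y$ or $A_M$ coincides with $M_{BC}$, but this happens only when the triangle is isoceles at $A$, in which case the claim is immediate by symmetry.
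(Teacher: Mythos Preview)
Your proof is correct and is essentially identical to the paper's own argument: the paper also introduces the antipode of $A$ (called $A_O$ there), uses the parallelograms $BHCA_O$ and $BA_YCA_M$ to conclude that $M_{BC}$ is the common midpoint of $HA_O$ and $A_YA_M$, deduces that $HA_YA_OA_M$ is a parallelogram so $HA_Y\parallel A_OA_M$, and finishes with Thales on the diameter $AA_O$. The only differences are cosmetic (your extra remarks on collinearity and the isosceles degenerate case).
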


\begin{center}
\begin{figure}[htbp]
\includegraphics[scale=.5]{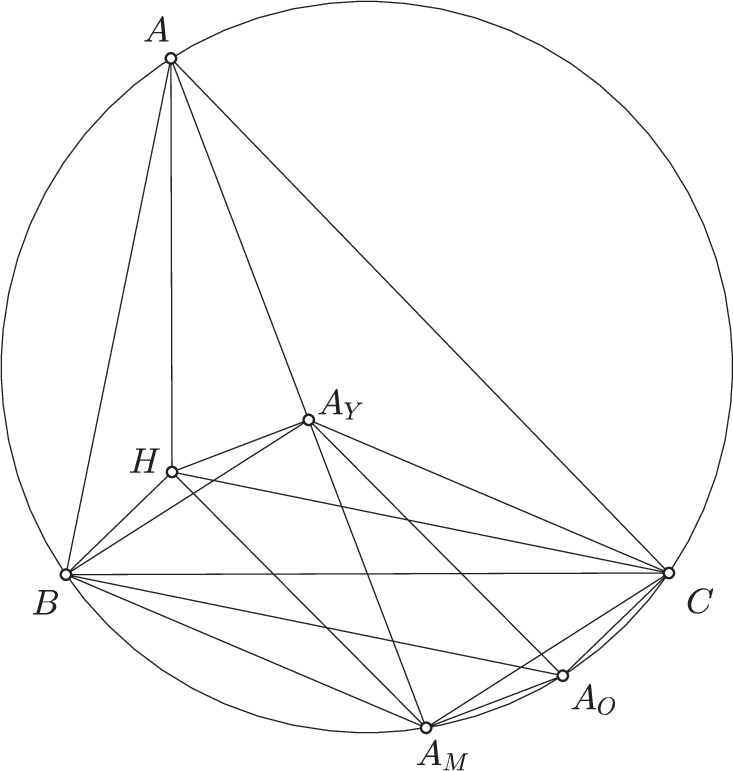}
\caption{Proof of Theorem \ref{theorem_AY_median}}
\end{figure}
\end{center}

\begin{proof}
Let $AA_O$ be diameter of $(ABC)$. Since $BHCA_O$ is a parallelogram, $M_{BC}$ is the midpoint of $A_OH$. On the other hand, $M_{BC}$ is also the mid point of $A_YA_M$. By Theorem \ref{theorem_AAY}, $HA_YA_OA_M$ is a parallelogram therefore $A_YH \parallel A_OA_M$. Moreover, $A_OA_M \perp AA_M$ hence $A_Y$ is the projection of $H$ on the $A$-median line.
\end{proof}

\begin{cor}
Let $B_Y$ and $C_Y$ be constructed in a similar manner as $A_Y$, but with respect to $B$ and $C$ respectively. Let $H_AH_BH_C$ be the Orthocentroidal triangle with $H_A$, $H_B$, $H_C$ repsectively be projection of $G$ on $AH$, $BH$, $CH$. Then two triangles $A_YB_YC_Y$ and $H_AH_BH_C$ are inscribed in the circle with diameter $GH$ which called Orthocentroidal circle.
\end{cor}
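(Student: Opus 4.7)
The plan is to observe that all six points $A_Y,B_Y,C_Y,H_A,H_B,H_C$ subtend right angles on the segment $GH$, and hence they all lie on the circle with diameter $GH$.

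First I would handle the points $A_Y,B_Y,C_Y$. By Theorem \ref{theorem_AY_median}, $A_Y$ is the projection of the orthocenter $H$ onto the $A$-median of triangle $ABC$. Since the $A$-median passes through the centroid $G$, the segment $HA_Y$ is perpendicular to the line $A_YG$, so $(A_YH,A_YG)=\pi/2$. Therefore $A_Y$ lies on the circle with diameter $GH$. The same argument applied at vertices $B$ and $C$ (using the obvious analogues of Theorem \ref{theorem_AY_median}, whose proof transfers verbatim) shows that $B_Y$ and $C_Y$ are the projections of $H$ on the $B$- and $C$-medians and likewise satisfy $(B_YH,B_YG)=(C_YH,C_YG)=\pi/2$, so they also lie on this circle.

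Next I would handle $H_A,H_B,H_C$. By the very definition given in the statement, these are the projections of $G$ onto the altitude-lines $AH$, $BH$, $CH$. Consequently, $(H_AG,H_AH)=(H_BG,H_BH)=(H_CG,H_CH)=\pi/2$, and each of these points lies on the circle with diameter $GH$ as well.

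Combining the two paragraphs, all six points $A_Y,B_Y,C_Y,H_A,H_B,H_C$ lie on the circle of diameter $GH$, which is precisely the orthocentroidal circle. Both triangles $A_YB_YC_Y$ and $H_AH_BH_C$ are therefore inscribed in it. There is no real obstacle here: once Theorem \ref{theorem_AY_median} is in hand, the statement reduces entirely to the elementary fact that the locus of points seeing a fixed segment under a right angle is the circle on that segment as diameter.
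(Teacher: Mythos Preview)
Your argument is correct and is precisely the intended one: the paper states this as an unproved corollary of Theorem~\ref{theorem_AY_median}, and the implicit proof is exactly that $A_Y,B_Y,C_Y$ (as projections of $H$ onto the medians through $G$) and $H_A,H_B,H_C$ (as projections of $G$ onto the altitude-lines through $H$) all see $GH$ under a right angle.
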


\begin{cor}
\label{cor_Hagge_Orthocentroidal}
The Hagge circle of the Lemoine point $L$ in triangle $ABC$ is the Orthocentroidal circle of this triangle.
\end{cor}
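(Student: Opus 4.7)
The plan is to read off the result directly from the characterisation of the Hagge circle unearthed inside the proofs of Lemmas~\ref{lemma_hagge} and~\ref{lemma_P_concurrent}. In those proofs, the composition $\mathbf{S}_{M_{BC}} \circ \mathbf{H}_{A}^{2}$ was shown to send $O$ to $H$ and the isogonal conjugate $P'$ of $P$ to its anticomplement $H'$ in triangle $ABC$; since this composition carries the circle with diameter $OP'$ onto $\mathcal{H}_P$, the segment $HH'$ is a diameter of $\mathcal{H}_P$.

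To apply this to $P = L$, my first step is to identify $P'$. Because the $A$-symmedian is by definition the reflection of the $A$-median across the internal bisector of $\angle A$ (and likewise at the other two vertices), the Lemoine point $L$ and the centroid $G$ are isogonal conjugates in triangle $ABC$, so $P' = G$. The second step is to evaluate $H'$. Anticomplementation with respect to $ABC$ is the homothety with centre $G$ and ratio $-2$; this homothety fixes $G$, so $H' = G$.

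Combining the two, $HH' = HG$ is a diameter of $\mathcal{H}_L$, which is exactly the defining property of the orthocentroidal circle, so the two circles coincide. The only potential obstacle is making sure the identification that $H'$ is the anticomplement of the isogonal conjugate of $P$ is pulled cleanly out of the proof of Lemma~\ref{lemma_P_concurrent}; but that identification is made explicit there, and once $P' = G$ is observed the conclusion is immediate.
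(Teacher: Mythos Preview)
Your argument is correct. The paper states this result as a corollary without an explicit proof, but the intended route is through the two results immediately preceding it: by Corollary~\ref{cor_AY_sym_AS} the point $A_Y$ is the reflection of $A_S$ in $BC$, where $A_S$ is the second intersection of the $A$-symmedian $AL$ with $(ABC)$; hence $A_Y,B_Y,C_Y$ are precisely the points $A_2,B_2,C_2$ of the Hagge construction for $P=L$, and the preceding corollary places $A_Y,B_Y,C_Y$ together with $H$ on the circle of diameter $GH$. Four concyclic points determine the circle, so $\mathcal{H}_L$ is the orthocentroidal circle.

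Your route is genuinely different and a bit slicker: instead of matching four points, you go straight for the diameter. You extract from the proof of Lemma~\ref{lemma_P_concurrent} the fact that the antipode $H'$ of $H$ on $\mathcal{H}_P$ is the anticomplement of the isogonal conjugate $P'$ of $P$, then observe that for $P=L$ one has $P'=G$ and that anticomplementation fixes $G$, giving $H'=G$. This identifies the diameter $HG$ in one stroke and avoids having to check the $A_Y$--$A_2$ correspondence. The paper's approach has the advantage of recycling the explicit identification of $A_Y,B_Y,C_Y$ already on hand; yours has the advantage of being essentially a two-line computation once the diameter description of $\mathcal{H}_P$ has been isolated.
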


\begin{center}
\begin{figure}[htbp]
\includegraphics[scale=.5]{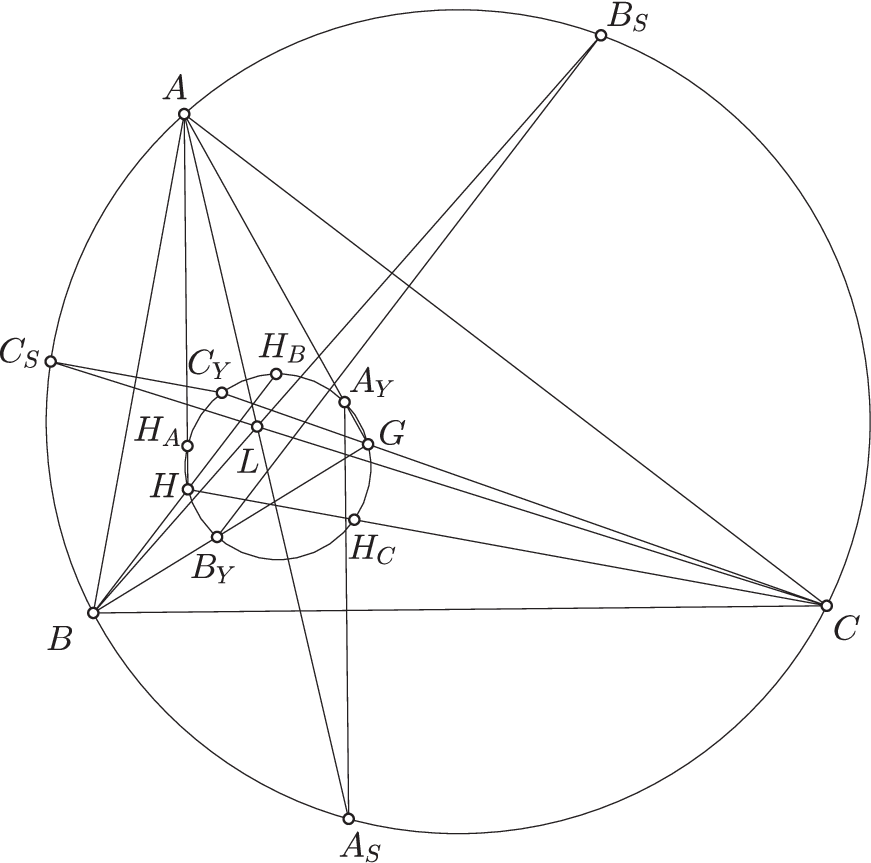}
\caption{Demonstration of Corollary \ref{cor_Hagge_Orthocentroidal}}
\end{figure}
\end{center}

\begin{theorem}
\label{theorem_X39}
The radical line of $\left(A_XB_XC_X\right)$ and $\left(A_YB_YC_Y\right)$ passes midpoint $X\left(39\right)$ of two Brocard points.
\end{theorem}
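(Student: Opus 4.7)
Let $M=X(39)$ denote the Brocard midpoint. The plan is to show that $M$ has equal power with respect to the two circles $(A_XB_XC_X)$ and $(A_YB_YC_Y)$; this will place $M$ on their radical axis.

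By Theorem~\ref{theorem_2nd_brocard}, the Brocard circle $(A_XB_XC_X)$ passes through both Brocard points $Z_1$ and $Z_2$. Since $M$ is by definition the midpoint of the chord $Z_1Z_2$, the power-of-a-point formula gives
$$
\operatorname{pow}\bigl(M,(A_XB_XC_X)\bigr)=-|MZ_1|^2=-\frac{|Z_1Z_2|^2}{4}.
$$
As observed just after Theorem~\ref{theorem_AY_median}, the circle $(A_YB_YC_Y)$ is the Orthocentroidal circle, for which $GH$ is a diameter. The diameter form of the power of a point then yields
$$
\operatorname{pow}\bigl(M,(A_YB_YC_Y)\bigr)=\overr{MG}\cdot\overr{MH},
$$
so the theorem reduces to the single vector identity
$$
\overr{MG}\cdot\overr{MH}=-\frac{|Z_1Z_2|^2}{4}.
$$

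To verify it, I would place $O$ at the origin (making $\overr{OH}=3\overr{OG}$), denote by $N$ the midpoint of $GH$ (the center of the Orthocentroidal circle), and invoke the identity $\overr{MG}\cdot\overr{MH}=|MN|^2-\tfrac14|GH|^2$. Since $|GH|=2|OG|$, the required equality takes the Pythagorean-type form
$$
|MN|^2+\frac{|Z_1Z_2|^2}{4}=|OG|^2.
$$
The position of $M$ along the Brocard axis $OL$ and the length $|Z_1Z_2|$ of the chord of the Brocard circle (whose diameter is $OL$) perpendicular to $OL$ at $M$ can both be expressed in terms of $|OL|$, $|OG|$, and the angle between the Euler and Brocard axes. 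The identity then follows from classical Brocard-geometry formulas such as $|OZ_i|^2=R^2(1-4\sin^2\omega)$, $|OG|^2=R^2-\tfrac19(a^2+b^2+c^2)$, and $\cot\omega=(a^2+b^2+c^2)/(4S)$, where $\omega$ is the Brocard angle and $S$ is the area of $\triangle ABC$.

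The chief obstacle is this final algebraic reduction: each classical ingredient is standard, but assembling them cleanly to pin down the precise position of $M$ along $OL$ requires careful bookkeeping. A more conceptual route would exploit the isogonal-conjugate correspondence of Theorem~\ref{theorem_AX_isogonalconjugate}: the five known points $\{A_X,B_X,C_X,O,L\}$ of the Brocard circle map pointwise to the five known points $\{A_Y,B_Y,C_Y,H,G\}$ of the Orthocentroidal circle under isogonal conjugation. However, because isogonal conjugation does not in general send a circle to a circle, this correspondence alone does not directly bypass the identity above.
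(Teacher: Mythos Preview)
The paper does not actually prove this theorem: it simply defers to an external barycentric-coordinate computation (the citation \cite{brocardmidpoint}). Your power-of-a-point reduction is therefore a genuinely different route, and the reduction itself is sound: since $(A_XB_XC_X)$ is the Brocard circle (Theorem~\ref{theorem_2nd_brocard}) containing the chord $Z_1Z_2$ with midpoint $M$, and $(A_YB_YC_Y)$ is the Orthocentroidal circle with diameter $GH$ (the corollary after Theorem~\ref{theorem_AY_median}), equality of powers is exactly the identity $\overr{MG}\cdot\overr{MH}=-\tfrac14|Z_1Z_2|^2$ you write down, and your rewriting of it as $|MN|^2+\tfrac14|Z_1Z_2|^2=|OG|^2$ is correct.

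That said, the proposal is not yet a proof, and you say so yourself: the ``chief obstacle'' is precisely this metric identity, which you leave unverified. Listing the classical ingredients ($|OZ_i|^2=R^2(1-4\sin^2\omega)$, $|OG|^2=R^2-\tfrac19(a^2+b^2+c^2)$, $\cot\omega=(a^2+b^2+c^2)/(4S)$) is not the same as assembling them; in particular you never pin down where $M$ sits on the Brocard axis $OL$, and that is exactly the missing input. One usable form is $\overr{OM}\cdot\overr{OL}=|OZ_1|^2$, which follows from the right triangle $OZ_1L$ (right angle at $Z_1$, since $OL$ is a diameter of the Brocard circle) with altitude foot $M$. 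From there the algebra can be pushed through, but it is real work, not a formality. As written, then, you have a correct strategy with the decisive computation left open---comparable in labor to the barycentric route the paper cites, but not yet a substitute for it. Your closing remark about isogonal conjugation is accurate (the five points do correspond), but as you note it does not by itself close the gap.
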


\begin{center}
\begin{figure}[htbp]
\includegraphics[scale=.5]{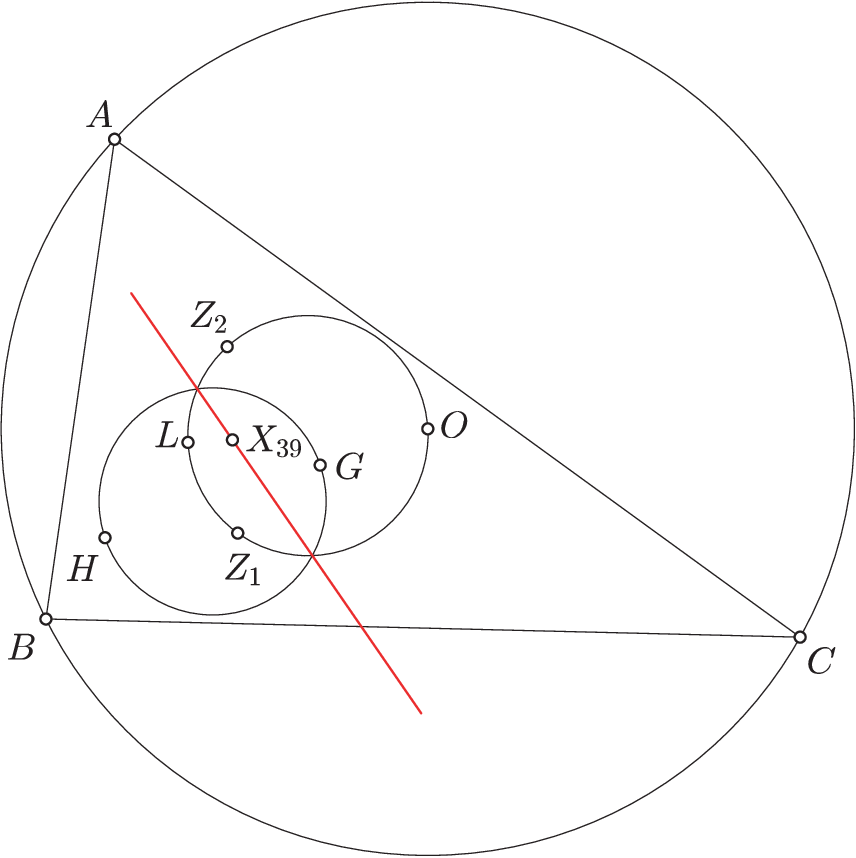}
\caption{Demonstration of Theorem \ref{theorem_X39}}
\end{figure}
\end{center}

We can see the proof of this property here \cite{brocardmidpoint} using Barycentric coordinate.

\begin{theorem}
\label{theorem_L_concurrent}
$A_YH_A$, $B_YH_B$, $C_YH_C$ are concurrent at $L$.
\end{theorem}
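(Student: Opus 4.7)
The plan is to apply Lemma \ref{lemma_P_concurrent} to triangle $ABC$ with $P = L$. By Corollary \ref{cor_Hagge_Orthocentroidal} the Hagge circle $\mathcal{H}_L$ coincides with the orthocentroidal circle, and I intend to identify the six points $A_2, B_2, C_2, A_3, B_3, C_3$ produced by Lemma \ref{lemma_P_concurrent} with $A_Y, B_Y, C_Y, H_A, H_B, H_C$ respectively; once this is done, the conclusion of that lemma is literally the statement of the theorem.

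First I identify $A_3 = H_A$ (and cyclically $B_3 = H_B$, $C_3 = H_C$). By definition $H_A$ is the foot of the perpendicular from $G$ to $AH$, so $(H_AG, H_AH) = \pi/2$ places $H_A$ on the circle with diameter $GH$, i.e., on $\mathcal{H}_L$; and since $H_A$ lies on $AH$ and is distinct from $H$, it is exactly the second intersection of $AH$ with $\mathcal{H}_L$, which is the defining property of $A_3$.

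For $A_2 = A_Y$, the line $AL$ is the $A$-symmedian, whose second intersection with $(ABC)$ is the point $A_S$ introduced in the proof of Theorem \ref{theorem_AX_project}. The Hagge-circle construction produces $A_2$ as the reflection of $A_S$ across $BC$, and Corollary \ref{cor_AY_sym_AS} states that this reflection is precisely $A_Y$; the $B$- and $C$-analogues give $B_Y = B_2$ and $C_Y = C_2$. With these identifications Lemma \ref{lemma_P_concurrent} immediately yields that $A_YH_A$, $B_YH_B$, $C_YH_C$ are concurrent at $L$. I do not foresee any real obstacle: the substantive content is already packaged into Corollary \ref{cor_Hagge_Orthocentroidal} and Lemma \ref{lemma_P_concurrent}; only a careful bookkeeping of the Hagge-circle data is required.
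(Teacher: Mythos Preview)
Your proposal is correct and follows essentially the same route as the paper: both arguments use Corollary~\ref{cor_Hagge_Orthocentroidal} to recognise the orthocentroidal circle as $\mathcal{H}_L$, invoke Corollary~\ref{cor_AY_sym_AS} to match $A_Y,B_Y,C_Y$ with the Hagge points $A_2,B_2,C_2$, and then apply Lemma~\ref{lemma_P_concurrent}. Your explicit verification that $H_A,H_B,H_C$ coincide with the points $A_3,B_3,C_3$ (the second intersections of $AH,BH,CH$ with $\mathcal{H}_L$) is a detail the paper leaves implicit, so your write-up is in fact a bit more complete.
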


\begin{lemma}
\label{lemma_lemoine_sub}
$L$ is also the Lemoine point of triangle $H_AH_BH_C$.
\end{lemma}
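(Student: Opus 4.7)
The plan is to compute the Lemoine point of $H_AH_BH_C$ as an affine combination of $H_A,H_B,H_C$ and check directly that it coincides with $L$. The first key fact is that $H_AH_BH_C$ is similar to $ABC$, with the correspondence $H_A\leftrightarrow A$, $H_B\leftrightarrow B$, $H_C\leftrightarrow C$: since $H,H_A,H_B,H_C$ all lie on the orthocentroidal circle, the inscribed angle theorem together with $BH\perp CA$ and $CH\perp AB$ yields
$$(H_AH_B,H_AH_C)=(HH_B,HH_C)=(BH,CH)\equiv (AB,AC)\pmod{\pi},$$
and cyclically at $H_B,H_C$. Consequently the sides of $H_AH_BH_C$ opposite $H_A,H_B,H_C$ are proportional to $a,b,c$, so the Lemoine point of $H_AH_BH_C$, which has barycentric coordinates $(a^2:b^2:c^2)$ with respect to its own triangle, is the point
$$K=\frac{a^2H_A+b^2H_B+c^2H_C}{a^2+b^2+c^2}.$$
Since also $L=\dfrac{a^2A+b^2B+c^2C}{a^2+b^2+c^2}$, the lemma reduces to the vector identity $\sum a^2H_A=\sum a^2A$ (cyclic sum).

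Next I would use the observation that $GH_A\parallel BC$, which follows from $GH_A\perp AH$ and $AH\perp BC$. Writing $A'$ for the foot of the altitude from $A$, this forces $H_A$ to lie on $AA'$ at the same perpendicular height above $BC$ as $G$; since $G$ is one third of the way up the altitude, we get $H_A=\tfrac{1}{3}A+\tfrac{2}{3}A'$, and cyclically $H_B=\tfrac{1}{3}B+\tfrac{2}{3}B'$, $H_C=\tfrac{1}{3}C+\tfrac{2}{3}C'$. Substituting these into $\sum a^2 H_A=\sum a^2 A$ collapses everything to the single vector identity
$$a^2A'+b^2B'+c^2C'=a^2A+b^2B+c^2C.$$

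The remaining identity is a direct cyclic computation: from the standard formula $aA'=(b\cos C)\,B+(c\cos B)\,C$ and its two cyclic analogues, one collects the coefficients of $A$, $B$, $C$ on the left, and then the projection identity $a=b\cos C+c\cos B$ (cyclically) shows the coefficients equal $a^2$, $b^2$, $c^2$ respectively. The only real obstacle is keeping the cyclic bookkeeping straight in this last step; once that is carried out, the earlier reductions give $K=L$ and the lemma follows at once.
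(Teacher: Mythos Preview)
Your argument is correct. The similarity of $H_AH_BH_C$ with $ABC$ follows from the inscribed angle computation you gave (strictly speaking it comes out as an \emph{inverse} similarity, since $(BH,CH)=(AC,AB)$ rather than $(AB,AC)$, but this is irrelevant: all you need is that the side opposite $H_A$ is proportional to $a$, etc., so the Lemoine point of $H_AH_BH_C$ is still $\tfrac{a^2H_A+b^2H_B+c^2H_C}{a^2+b^2+c^2}$). The formula $H_A=\tfrac13 A+\tfrac23 A'$ is exactly the observation that $GH_A\parallel BC$ puts $H_A$ on the altitude at one third of the height of $A$, and the closing identity $\sum a^2A'=\sum a^2A$ drops out of $aA'=(b\cos C)B+(c\cos B)C$ together with the projection law $a=b\cos C+c\cos B$, just as you indicate.

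This is a genuinely different route from the paper. The paper does not compute anything: it invokes the Hagge-circle machinery built earlier (Corollary~\ref{cor_AY_sym_AS}, Corollary~\ref{cor_Hagge_Orthocentroidal}, Lemma~\ref{lemma_hagge}, Lemma~\ref{lemma_P_concurrent}) to identify the orthocentroidal circle as the Hagge circle of $L$ and then read off the result. Your approach is self-contained and purely affine/metric: it needs nothing about Hagge circles, only the elementary facts that the centroid sits at one third of each altitude-height and the projection law for side lengths. The price is a small amount of bookkeeping in the final cyclic sum; the gain is that the argument stands on its own and would work verbatim in any setting where you know the $H_A,H_B,H_C$ explicitly. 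The paper's approach, by contrast, ties the lemma into the broader Hagge-circle narrative of the article, which is thematically nicer here but relies on several earlier nontrivial lemmas.
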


\begin{proof}
By Corollary \ref{cor_Hagge_Orthocentroidal}, the Orthocentroidal circle of triangle $ABC$ is the Hagge circle of $L$ with respect to this triangle. Moreover, by Corollary \ref{cor_AY_sym_AS}, triangle $A_YB_YC_Y$ is the Hagge triangle of $L$, which means $A_Y$, $B_Y$, $C_Y$ are constructed in the same way of $A_2$, $B_2$, $C_2$ in Lemma \ref{lemma_hagge}, but with respect to $L$. We then apply Lemma \ref{lemma_P_concurrent} to completely solve this result.
\end{proof}

Back to Theorem \ref{theorem_L_concurrent},

\begin{proof} By applying Lemma \ref{lemma_hagge}, \ref{lemma_P_concurrent} and \ref{lemma_lemoine_sub} for triangle $ABC$ with Lemoine point $L$ and the Orthocentroidal circle, we can prove the theorem. \end{proof}

\begin{center}
\begin{figure}[htbp]
\includegraphics[scale=.5]{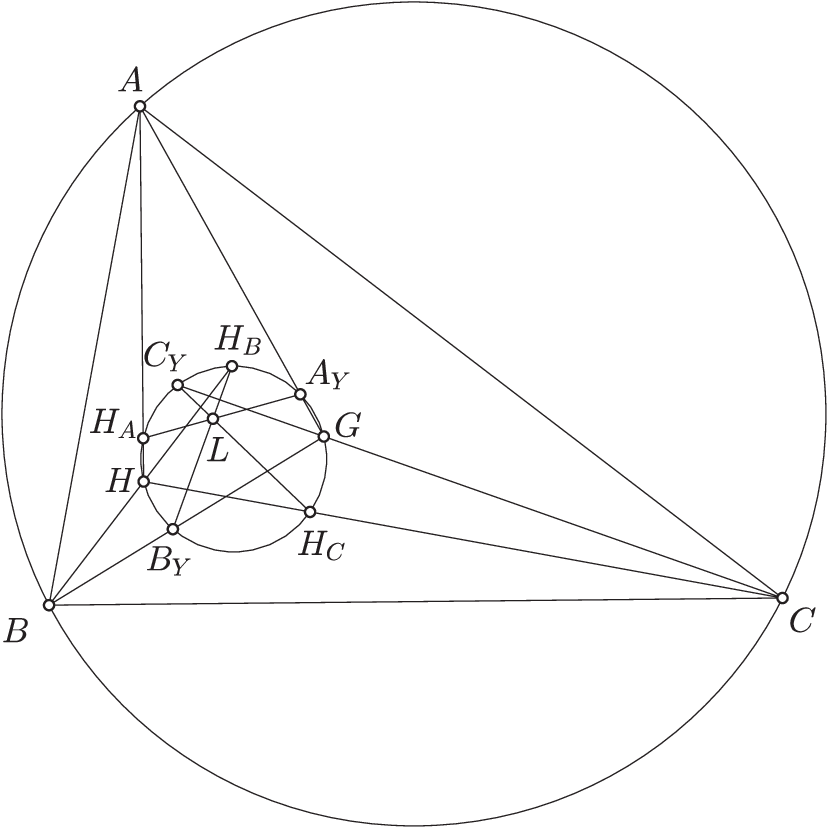}
\caption{Proof of Theorem \ref{theorem_L_concurrent}}
\end{figure}
\end{center}

\begin{theorem}
\label{theorem_AXAY_parallel_LNMBC}
The perpendicular bisector of $AXA_{GY}$ cuts $AL$ at $L_N$. Then $AXA_Y \parallel L_NM_{BC}$.
\end{theorem}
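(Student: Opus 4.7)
The plan is to place Cartesian coordinates with $M_{BC}$ at the origin and $BC$ along the $x$-axis, and then compute every named point explicitly in terms of $A=(p,q)$ and the single parameter $\alpha=a^2/(4m_a^2)$, where $a=|BC|$ and $m_a=|AM_{BC}|$. First I would locate the endpoints of the A-median and the A-symmedian on $(ABC)$: by the power of $M_{BC}$ with respect to $(ABC)$ one has $A_M=-\alpha(p,q)$, and by the classical reflection $A_M\leftrightarrow A_S$ across the perpendicular bisector of $BC$ one gets $A_S=(\alpha p,-\alpha q)$. From these, Corollary~\ref{cor_AY_sym_AS} yields $A_Y=\alpha(p,q)$, while the midpoint characterisations established in the proofs of Theorem~\ref{theorem_AX_project} and Theorem~\ref{theorem_AGY_midpoint} give
\[
A_X=\left(\tfrac{p(1+\alpha)}{2},\,\tfrac{q(1-\alpha)}{2}\right),\qquad A_{GY}=\left(\tfrac{p(1-\alpha)}{2},\,\tfrac{q(1-\alpha)}{2}\right).
\]

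With these points in hand the perpendicular bisector of $A_XA_{GY}$ becomes visible as the vertical line $x=p/2$, since $A_X$ and $A_{GY}$ share the $y$-coordinate $q(1-\alpha)/2$, which is also Corollary~\ref{cor_AXAGY_parallel_BC}, and their midpoint has $x$-coordinate $p/2$. Intersecting this vertical line with the A-symmedian (the line through $A$ and $A_X$) by a routine parametrisation produces
\[
L_N=\left(\tfrac{p}{2},\,\tfrac{q(1-3\alpha)}{2(1-\alpha)}\right).
\]
Then I would compute the two direction vectors
\[
A_Y-A_X=\tfrac{1}{2}\bigl(p(\alpha-1),\,q(3\alpha-1)\bigr),\qquad M_{BC}-L_N=\tfrac{1}{2(1-\alpha)}\bigl(p(\alpha-1),\,q(3\alpha-1)\bigr),
\]
observe that they are scalar multiples of one another, and conclude $A_XA_Y\parallel L_NM_{BC}$.

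The main obstacle is organising the bookkeeping so that all five named points are controlled by the same parameter $\alpha$; the whole proof hinges on the factor $(1-3\alpha)$ appearing simultaneously in the $y$-coordinate of $L_N$ and in the second coordinate of $A_Y-A_X$, and that coincidence is transparent only after everything has been reduced to the common form above. A more conceptual (but longer) synthetic packaging of the same fact uses the homothety with centre $A$ and ratio $\tfrac12$, which sends $A_S\mapsto A_X$, $A_M\mapsto A_{GY}$, and the perpendicular bisector of $BC$ (which is also the perpendicular bisector of the chord $A_SA_M$) to the perpendicular bisector of $A_XA_{GY}$; applying its inverse reduces the claim to a parallelism involving the point $L_N'=2L_N-A$ that lies on the A-symmedian and on the perpendicular bisector of $BC$. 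I would not pursue this second route because the coordinate calculation above is already short, but it explains conceptually why the factor $1/(1-\alpha)$ arises as the ratio between the two parallel segments.
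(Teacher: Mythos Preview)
Your coordinate argument is correct: the identifications $A_M=-\alpha(p,q)$, $A_S=(\alpha p,-\alpha q)$, $A_Y=\alpha(p,q)$, $A_X=\bigl(\tfrac{p(1+\alpha)}{2},\tfrac{q(1-\alpha)}{2}\bigr)$, $A_{GY}=\bigl(\tfrac{p(1-\alpha)}{2},\tfrac{q(1-\alpha)}{2}\bigr)$ are all valid consequences of Theorem~\ref{theorem_AX_project}, Theorem~\ref{theorem_AGY_midpoint} and Corollary~\ref{cor_AY_sym_AS}, and the remaining algebra checks out (the only degenerate case $\alpha=1$ corresponds to a right angle at $A$, where $L_N$ is undefined in the paper's statement as well).

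The paper's own proof is synthetic and avoids coordinates entirely. It observes that the perpendicular bisector of $A_XA_{GY}$ is the vertical line through $A_N$ (the midpoint of $AM_{BC}$, which in your coordinates is $(p/2,q/2)$), hence $A_NL_N$ is perpendicular to $BC$ and therefore parallel to $A_YA_S$. Applying the intercept theorem from $A$ on the median and the symmedian gives $\overline{AL_N}\cdot\overline{AA_Y}=\overline{AA_N}\cdot\overline{AA_S}$; replacing $\overline{AA_N}=\tfrac12\overline{AM_{BC}}$ and $\overline{AA_S}=2\,\overline{AA_X}$ yields $\overline{AA_X}/\overline{AL_N}=\overline{AA_Y}/\overline{AM_{BC}}$, whence $A_XA_Y\parallel L_NM_{BC}$. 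This is essentially the ``more conceptual'' route you sketch at the end via the homothety $\mathbf{H}_A^{1/2}$, since that homothety is exactly what produces the pairs $(A_S,A_X)$, $(A_M,A_{GY})$, $(M_{BC},A_N)$ used in the paper's ratio manipulation. Your coordinate version has the advantage of being self-contained and mechanical; the paper's version is shorter once the earlier corollaries are in place and makes the role of the common vertex $A$ transparent.
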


\begin{center}
\begin{figure}[htbp]
\includegraphics[scale=.5]{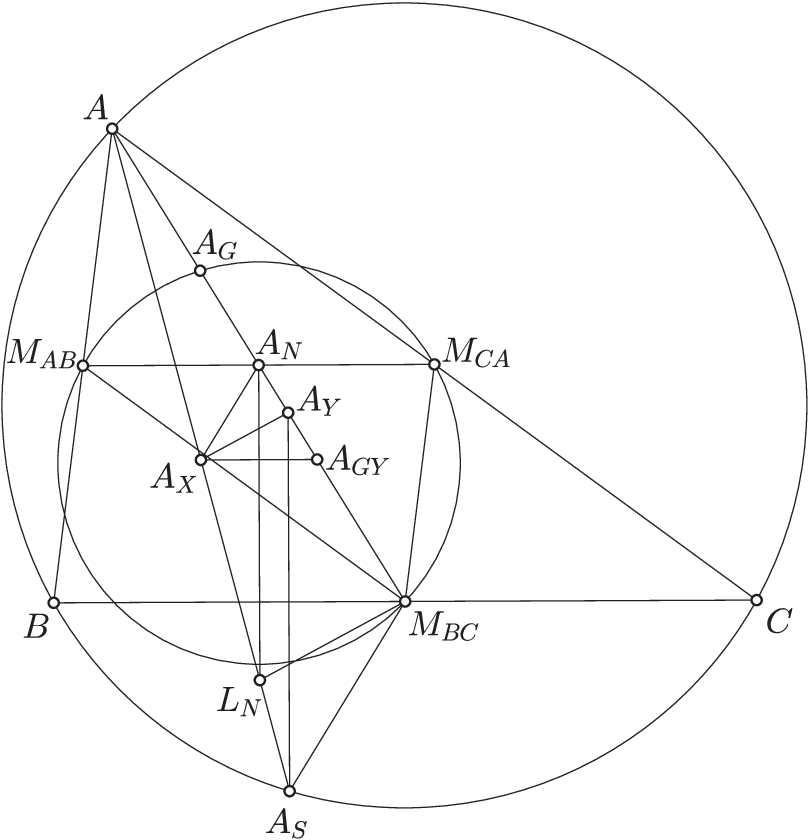}
\caption{Proof of Theorem \ref{theorem_AXAY_parallel_LNMBC}}
\end{figure}
\end{center}

\begin{proof}
By Corollary \ref{cor_AXAGY_parallel_BC}, $L_N$ also lies on the perpendicular bisector of $M_{AB}M_{CA}$ containing $A_N$ so $A_NL_N \parallel A_YA_S$. Therefore $$\overline{AL_N}\cdot\overline{AA_Y}=\overline{AA_N}\cdot\overline{AA_S}=\overline{AM_{BC}}\cdot\overline{AA_X}$$ so $\frac{\overline{AA_X}}{\overline{AL_N}}=\frac{\overline{AA_Y}}{\overline{AM_{BC}}}$. Hence, $A_XA_Y \parallel L_NM_{BC}$.
\end{proof}

\begin{theorem}
\label{theorem_AXAGYLN_tangent}
$\left(A_XA_{GY}L_N\right)$ is tangent to $\left(BOC\right)$.
\end{theorem}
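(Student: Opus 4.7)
The plan is to identify $A_X$ as the tangency point and verify the tangency by matching tangent directions there in directed angles modulo $\pi$. The enabling observation is that $(BOC)$ has $OP$ as a diameter, where $P$ denotes the pole of $BC$ with respect to $(ABC)$, i.e., the intersection of the tangents to $(ABC)$ at $B$ and $C$: since $\angle OBP = \angle OCP = \pi/2$, the points $B$ and $C$ lie on the circle with diameter $OP$, which is therefore $(BOC)$. Since $P$ lies on the $A$-symmedian and, by Theorem~\ref{theorem_AX_project}, $OA_X \perp AP$, we have $\angle OA_XP = \pi/2$, placing $A_X$ on the circle with diameter $OP$. Hence $A_X \in (BOC)$, and it is the natural candidate for the tangency point.

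For the tangency itself, I would compare tangent directions at $A_X$ via the tangent--chord formula. Let $\tau$ be the tangent to $(BOC)$ at $A_X$; taking $O$ as the inscribed-angle vertex for the chord $A_XP$ (which lies along $AP$),
\[
\angle(\tau, AP) \equiv \angle(OA_X, OP) \pmod \pi.
\]
Using $OA_X \perp AP$ and $OP \perp BC$ (since $O$ and $P$ both lie on the perpendicular bisector of $BC$), this decomposes to $\angle(\tau, AP) \equiv \angle(AP, BC) \pmod \pi$. For $\Gamma_1 := (A_XA_{GY}L_N)$ with tangent $\tau'$ at $A_X$: since the Lemoine point $L$ lies on the $A$-symmedian, so does $L_N$, and the chord $A_XL_N$ lies on $AP$; taking $A_{GY}$ as the inscribed-angle vertex,
\[
\angle(\tau', AP) \equiv \angle(A_{GY}A_X, A_{GY}L_N) \pmod \pi.
\]
Now by Corollary~\ref{cor_AXAGY_parallel_BC}, $A_XA_{GY} \parallel BC$, and the reflection across the perpendicular bisector of $A_XA_{GY}$ fixes $L_N$ (because $L_NA_X = L_NA_{GY}$) while swapping $A_X \leftrightarrow A_{GY}$; since reflections negate directed angles, this yields $\angle(A_{GY}A_X, A_{GY}L_N) \equiv \angle(A_XL_N, A_XA_{GY}) \equiv \angle(AP, BC) \pmod \pi$.

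Both computations therefore give $\angle(\tau, AP) \equiv \angle(\tau', AP) \equiv \angle(AP, BC) \pmod \pi$, so $\tau = \tau'$ as lines, proving that $(A_XA_{GY}L_N)$ is tangent to $(BOC)$ at $A_X$. The main obstacle is the structural step of recognizing $OP$ as a diameter of $(BOC)$; once that is in place, Theorem~\ref{theorem_AX_project} hands over $A_X \in (BOC)$ for free, and the remainder is a short tangent--chord chase whose most delicate part is the directed-angle bookkeeping in the reflection argument.
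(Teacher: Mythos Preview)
Your proof is correct and follows essentially the same strategy as the paper: establish that $A_X$ lies on $(BOC)$ and then verify, via a tangent--chord computation in directed angles, that the tangents to the two circles at $A_X$ coincide. The isosceles structure of $\triangle L_NA_XA_{GY}$ (your reflection argument) is exactly what the paper exploits when it writes $(A_Xd,A_XA_S)=(A_XA_S,A_XA_{GY})$.

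The one genuine difference is in how the $(BOC)$ side is handled. The paper asserts $A_X\in(BOC)$ by citing Theorem~\ref{theorem_AX_project} and then uses the fact (implicit in that theorem's proof) that $A_XA_S$ bisects $\angle BA_XC$, computing the tangent--chord angle against the chord $A_XB$. You instead observe that $(BOC)$ has diameter $OP$ with $P$ the pole of $BC$, which simultaneously gives $A_X\in(BOC)$ for free (since $OA_X\perp AP$) and lets you compute the tangent--chord angle against the chord $A_XP$ using $O$ as the inscribed vertex. Your route is arguably cleaner: it makes the membership $A_X\in(BOC)$ transparent rather than asserted, and it avoids reaching back into the similar-triangles argument of Theorem~\ref{theorem_AX_project} for the bisector property.
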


\begin{center}
\begin{figure}[htbp]
\includegraphics[scale=.5]{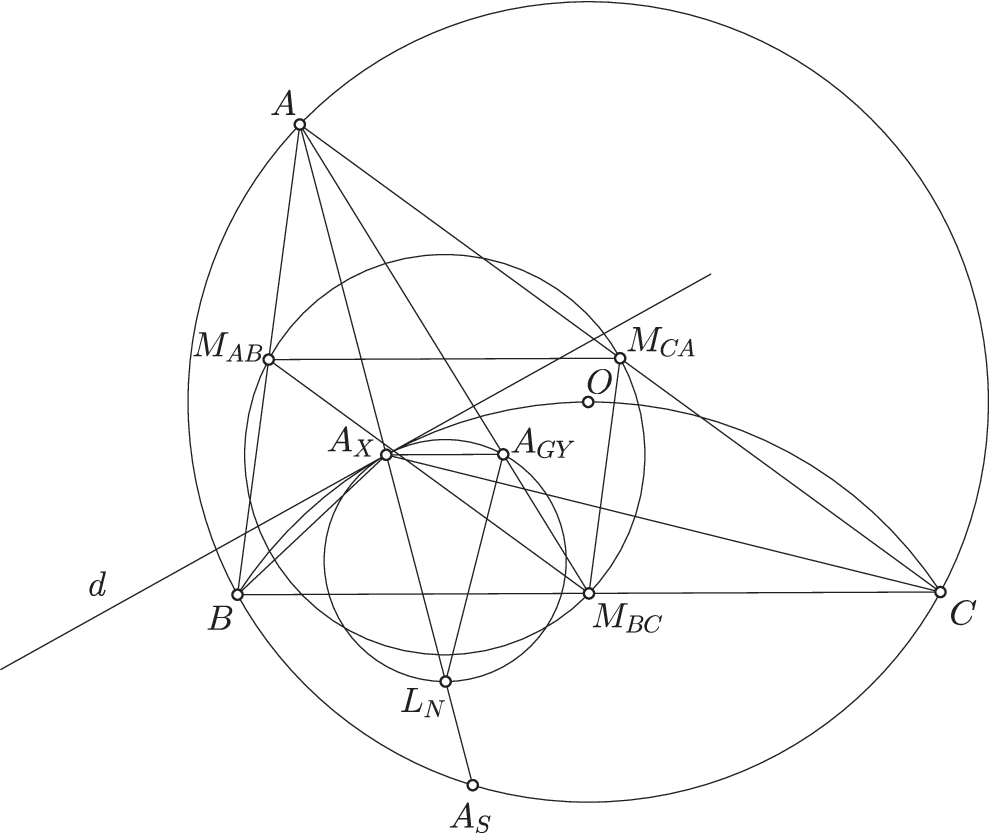}
\caption{Proof of Theorem \ref{theorem_AXAGYLN_tangent}}
\end{figure}
\end{center}

\begin{proof} By Theorem \ref{theorem_AX_project}, $A_X$ lies on $(BOC)$. Let $Ad$ be a ray which belongs to tangent line at $A$ of $\left(A_XA_{GY}L_N\right)$ such that $Ad$ and $B$ are on the same side of the plane separated by $AL$. By Theorem \ref{theorem_AX_project}, $A_XA_S$ is the $A_X$-bisector of triangle $A_XBC$. We have
$$\begin{aligned} (A_Xd, A_XB) & =(A_Xd, A_XA_S)+(A_XA_S, A_XB) \\
& =(A_XA_S, A_XA_{GY})+(A_XC, A_XA_S) \\
& =(A_XC, A_XA_{GY}) \\
& =(CB, CA_X). \end{aligned}$$
So $Ad$ is the tangent line of $(A_XBC) \equiv (BOC)$. We have done.
\end{proof}

\begin{theorem}
\label{theorem_equal}
Let $A_{LX}$ be projection of $H$ on $AL$. Then $M_{BC}A_Y=M_{BC}A_{LX}$.
\end{theorem}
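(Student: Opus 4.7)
The plan is to route the desired equality through the point $A_S$ (the second intersection of the symmedian $AL$ with $(ABC)$), which we will show is equidistant from $M_{BC}$ as both $A_Y$ and $A_{LX}$ are. This lets us replace the awkward comparison between feet of perpendiculars on the median and symmedian by a single claim about projections onto $AL$.

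First, by Corollary \ref{cor_AY_sym_AS}, $A_Y$ is the reflection of $A_S$ across the line $BC$; since $M_{BC}$ lies on $BC$, this immediately gives $M_{BC}A_Y = M_{BC}A_S$. So it suffices to prove $M_{BC}A_{LX} = M_{BC}A_S$. Because $A_{LX}$ and $A_S$ both lie on the line $AL$, this is equivalent to showing that the foot $M^\ast$ of the perpendicular from $M_{BC}$ onto $AL$ is the midpoint of the segment $A_{LX}A_S$.

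To check that, let $\pi$ denote orthogonal projection onto $AL$. By Theorem \ref{theorem_AX_project}, $\pi(O)=A_X$ and $A_X$ is the midpoint of $AA_S$, so $A_S = 2A_X - A$ along $AL$; by definition $\pi(H)=A_{LX}$ and of course $\pi(A)=A$. Now use the classical relation $\overrightarrow{OM_{BC}} = \tfrac12\,\overrightarrow{AH}$ (equivalently, $BHCA_O$ is a parallelogram, as already invoked in Theorem \ref{theorem_AY_median}). Since $\pi$ is an affine map, projecting this relation onto $AL$ yields
$$M^\ast - A_X \;=\; \tfrac12\bigl(A_{LX}-A\bigr), \qquad \text{i.e.} \qquad M^\ast \;=\; A_X + \tfrac12\bigl(A_{LX}-A\bigr).$$
On the other hand, the midpoint of $A_{LX}A_S$ is
$$\tfrac12\bigl(A_{LX}+A_S\bigr) \;=\; \tfrac12\bigl(A_{LX}+2A_X-A\bigr) \;=\; A_X+\tfrac12\bigl(A_{LX}-A\bigr) \;=\; M^\ast,$$
which is exactly what we wanted. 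Chaining the equalities gives $M_{BC}A_Y = M_{BC}A_S = M_{BC}A_{LX}$.

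The hardest step is really just the initial move of introducing $A_S$ as a bridge; after that the argument is a two-line affine computation. The only mild subtlety is recognizing that the Euler-type identity $\overrightarrow{OM_{BC}} = \tfrac12\,\overrightarrow{AH}$ is precisely the relation needed so that the projected triple $\{A_X, M^\ast, A_{LX}\}$ inherits the affine combination linking $\{O, M_{BC}, H\}$, which in turn forces $M^\ast$ to be the midpoint of $A_{LX}A_S$.
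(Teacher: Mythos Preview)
Your proof is correct and follows a genuinely different route from the paper's. The paper works entirely inside the circle with diameter $AH$: it places $A_Y$, $A_{LX}$, and the altitude feet $H_{CA}$, $H_{AB}$ on that circle, uses that $M_{BC}$ is the pole of $H_{CA}H_{AB}$ (intersection of the two tangents) to get a harmonic quadrilateral, then argues that $A_YA_{LX}\parallel H_{CA}H_{AB}$ via the median/symmedian swap in triangle $AH_{CA}H_{AB}$, so that the perpendicular bisector of $H_{CA}H_{AB}$ (which contains $M_{BC}$) is also the perpendicular bisector of $A_YA_{LX}$.

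Your argument instead introduces $A_S$ as a bridge: Corollary~\ref{cor_AY_sym_AS} immediately gives $M_{BC}A_Y=M_{BC}A_S$, and then a one-line affine computation---projecting the Euler relation $\overrightarrow{OM_{BC}}=\tfrac12\overrightarrow{AH}$ onto $AL$ and using $\pi(O)=A_X=\tfrac12(A+A_S)$ from Theorem~\ref{theorem_AX_project}---shows that the foot of $M_{BC}$ on $AL$ is the midpoint of $A_{LX}A_S$. This is shorter and avoids the harmonic/tangent machinery; what the paper's approach buys in exchange is a purely synthetic picture that also explains the Corollary about tangency with the Euler circle (since it identifies the relevant perpendicular bisector with that of $H_{CA}H_{AB}$).
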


\begin{center}
\begin{figure}[htbp]
\includegraphics[scale=.4]{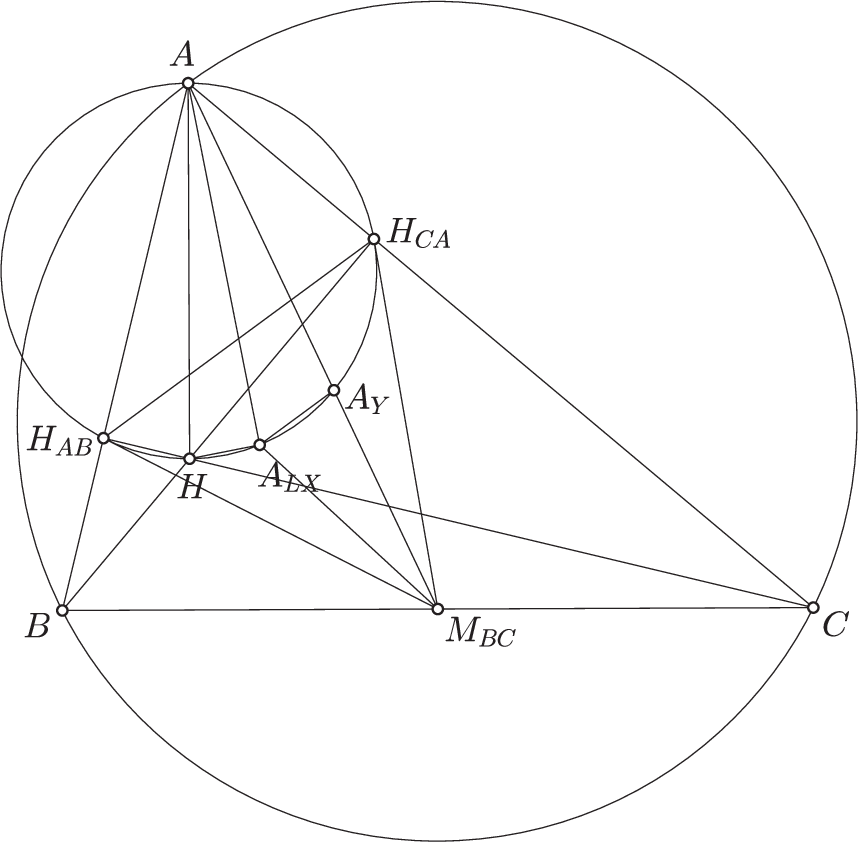}
\caption{Proof of Theorem \ref{theorem_equal}}
\end{figure}
\end{center}

\begin{proof} Draw altitudes $BH_{CA}$ and $CH_{AB}$ of triangle $ABC$. It can be seen that $A, H, H_{CA}, H_{AB}, A_Y, A_{LX}$ lie on the circle with diameter $AH$. Since $M_{BC}$ is the intersection of two tangent lines at $H_{CA}$ and $H_{AB}$ of $\left(AH\right)$ so $AH_{AB}A_YH_{BC}$ is a harmonic quadrilateral. Since $AL$ is a median line of triangle $AH_{CA}H_{AB}$ hence $H_{CA}H_{AB} \parallel A_YA_{LX}$. Moreover, as $M$ lies on the perpendicular bisector of $H_{CA}H_{AB}$, which is also the perpendicular bisector of $A_YA_{LX}$, so that $M_{BC}A_Y=M_{BC}A_{LX}$. \end{proof}

\begin{cor}
$\left(A_YM_{BC}A_{LX}\right)$ is tangent to the Euler circle of triangle $ABC$.
\end{cor}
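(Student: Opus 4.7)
The plan is to show that both circles pass through $M_{BC}$ and admit the same tangent line there; tangency of the two circles at $M_{BC}$ follows immediately. Since $M_{BC}$ is the midpoint of $BC$, it lies on the nine-point (Euler) circle of $\triangle ABC$; and $M_{BC}$ is by construction a point of $(A_YM_{BC}A_{LX})$. So the two circles already share $M_{BC}$, and the task reduces to comparing their tangent lines there.

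For the Euler circle, I would invoke the homothety $\mathbf{H}_{G}^{-1/2}$ centred at the centroid $G$ with ratio $-\tfrac{1}{2}$. A quick position-vector check gives $\mathbf{H}_{G}^{-1/2}(A)=M_{BC}$ and $\mathbf{H}_{G}^{-1/2}(O)=N$, the nine-point centre, so $\mathbf{H}_{G}^{-1/2}$ sends the circumcircle $(ABC)$ to the Euler circle. Hence the tangent $\tau$ to the Euler circle at $M_{BC}$ is parallel to the tangent $\tau_A$ to $(ABC)$ at $A$. A standard tangent–chord argument identifies $\tau_A$ as antiparallel to $BC$ with respect to $\angle A$. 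On the other hand, since $\angle BH_{CA}C=\angle BH_{AB}C=\tfrac{\pi}{2}$, the points $B,C,H_{AB},H_{CA}$ lie on the circle of diameter $BC$, whence $H_{CA}H_{AB}$ is also antiparallel to $BC$ with respect to $\angle A$. Therefore $\tau\parallel\tau_A\parallel H_{CA}H_{AB}$.

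For $(A_YM_{BC}A_{LX})$, Theorem \ref{theorem_equal} gives $M_{BC}A_Y=M_{BC}A_{LX}$, so the inscribed triangle $A_YM_{BC}A_{LX}$ is isosceles with apex $M_{BC}$. By the inscribed angle equalling the tangent–chord angle, the tangent at the apex is parallel to the base, i.e. parallel to $A_YA_{LX}$. But the proof of Theorem \ref{theorem_equal} already shows $A_YA_{LX}\parallel H_{CA}H_{AB}$ (via the harmonic-quadrilateral/median argument inside the circle on diameter $AH$). Hence both circles admit tangent lines at $M_{BC}$ parallel to $H_{CA}H_{AB}$, and since both of these tangent lines pass through the common point $M_{BC}$ they must coincide; this is precisely tangency of the two circles at $M_{BC}$.

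The main obstacle is not any substantially new geometry but rather orchestrating the chain of parallels cleanly: the two crucial parallelisms, $A_YA_{LX}\parallel H_{CA}H_{AB}$ and $\tau_A\parallel H_{CA}H_{AB}$, come directly from Theorem \ref{theorem_equal} and from the classical antiparallel property of the feet of altitudes; the homothety $\mathbf{H}_{G}^{-1/2}$ is what transfers that information from the vertex $A$ to the midpoint $M_{BC}$.
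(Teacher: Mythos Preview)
Your argument is correct. The paper states this corollary without proof, so there is no authorial proof to compare against; what you have written is exactly the kind of justification the placement (immediately after Theorem~\ref{theorem_equal}) invites, and each step checks out: the homothety $\mathbf{H}_G^{-1/2}$ does send $(O,A)$ to $(N,M_{BC})$, the tangent--chord identification of $\tau_A$ with the antiparallel direction is standard, and the isosceles/tangent-at-apex observation together with $A_YA_{LX}\parallel H_{CA}H_{AB}$ from the proof of Theorem~\ref{theorem_equal} closes the loop.

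One remark: you can bypass the homothety entirely and stay inside the data of Theorem~\ref{theorem_equal}. The feet $H_{CA},H_{AB}$ lie on the Euler circle, and the proof of Theorem~\ref{theorem_equal} already records that $M_{BC}$ is equidistant from them (it is the intersection of the tangents to $(AH)$ at $H_{CA}$ and $H_{AB}$, hence on their perpendicular bisector). So the very same ``tangent at the apex of an isosceles inscribed triangle is parallel to the base'' argument, applied to $M_{BC}H_{CA}H_{AB}$ in the Euler circle, gives $\tau\parallel H_{CA}H_{AB}$ directly. This is marginally shorter and keeps the whole proof inside the circle-on-$AH$/nine-point picture, but your homothety route is equally valid and perhaps more transparent about why the Euler circle enters at all.
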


\begin{theorem}
\label{theorem_symmedian}
Let $L_{BC}$ be intersection of $AL$ and $BC$. Then $A_YL_{BC}$ is a symmedian line of triangle $A_YBC$.
\end{theorem}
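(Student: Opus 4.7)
The plan is to identify $L_{BC}$ as the foot of the $A_Y$-symmedian of triangle $A_YBC$ on $BC$, using the classical characterization that in any triangle $XYZ$ the foot of the $X$-symmedian on $YZ$ divides $YZ$ in the ratio $XY^2:XZ^2$. Since $L$ is the Lemoine point of $ABC$, the line $AL$ is the $A$-symmedian of $ABC$, so $L_{BC}$ is the foot of this symmedian on $BC$ and $\overline{BL_{BC}}/\overline{L_{BC}C}=AB^2/AC^2$. It will therefore suffice to prove $A_YB/A_YC=AB/AC$, after which the two ratios above agree and the claim follows from the characterization.

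To handle $A_YB/A_YC$, I would invoke Corollary \ref{cor_AY_sym_AS}: $A_Y$ is the reflection of $A_S$ across the line $BC$, where $A_S$ is the second intersection of the $A$-symmedian of $ABC$ with $(ABC)$. Reflection across $BC$ fixes both $B$ and $C$, so $A_YB=A_SB$ and $A_YC=A_SC$. The remaining task is thus the classical identity $A_SB/A_SC=AB/AC$.

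For this auxiliary identity I would introduce $T$, the intersection of the tangents to $(ABC)$ at $B$ and $C$, i.e.\ the pole of $BC$ with respect to $(ABC)$. Since the $A$-symmedian of $ABC$ is known to pass through $T$, the points $A,A_S,T$ are collinear, with $TB=TC$ and $TA\cdot TA_S=TB^2=TC^2$. These power relations yield the similarities $\triangle TAB\sim\triangle TBA_S$ and $\triangle TAC\sim\triangle TCA_S$ (common angle at $T$ together with proportional adjacent sides). From them, $A_SB=AB\cdot TA_S/TB$ and $A_SC=AC\cdot TA_S/TC$, and using $TB=TC$ we conclude $A_SB/A_SC=AB/AC$, as required.

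Combining everything, $\overline{BL_{BC}}/\overline{L_{BC}C}=AB^2/AC^2=A_YB^2/A_YC^2$, which is precisely the defining condition for $L_{BC}$ to be the foot of the $A_Y$-symmedian of $\triangle A_YBC$ on side $BC$. Hence $A_YL_{BC}$ is the $A_Y$-symmedian line of $A_YBC$. The main (and essentially the only) obstacle is the similar-triangles step establishing $A_SB/A_SC=AB/AC$; this is classical, and once it is in place the rest of the argument is immediate from Corollary \ref{cor_AY_sym_AS} and the foot-ratio characterization of symmedians.
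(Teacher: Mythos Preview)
Your proof is correct and follows the same overall strategy as the paper: both reduce the claim to the identity $A_YB/A_YC = AB/AC$ and then invoke the foot-ratio characterization of a symmedian on $BC$. The only difference is in how that ratio is established. The paper uses Theorem~\ref{theorem_AAY} (the parallelogram $BA_YCA_M$) together with the pair of similar triangles arising from the $A$-median meeting $(ABC)$ at $A_M$, obtaining $AB/AC = A_MC/A_MB = A_YB/A_YC$. You instead use Corollary~\ref{cor_AY_sym_AS} to replace $A_Y$ by $A_S$, and then derive $A_SB/A_SC = AB/AC$ from the pole $T$ of $BC$ via the similarities $\triangle TAB \sim \triangle TBA_S$ and $\triangle TAC \sim \triangle TCA_S$; this is exactly the classical fact that $ABA_SC$ is a harmonic quadrilateral. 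Both derivations are standard and of comparable length, so the choice between them is a matter of taste.
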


\begin{center}
\begin{figure}[htbp]
\includegraphics[scale=.5]{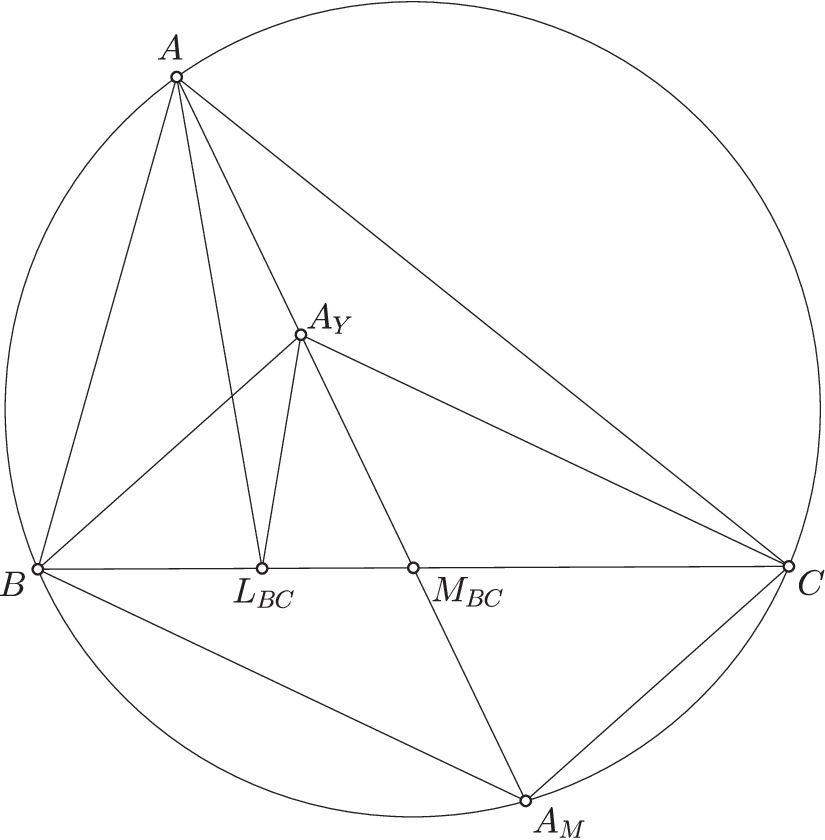}
\caption{Proof of Theorem \ref{theorem_symmedian}}
\end{figure}
\end{center}

\begin{proof}
Since $AL_{BC}$ is a symmedian line of triangle $ABC$ so
$$\frac{\overline{L_{BC}B}}{\overline{L_{BC}C}}=-\left(\frac{AB}{AC}\right)^2.$$
On the other hand,
$$\left(\frac{AB}{AC}\right)^2=\left(\frac{AB}{M_{BC}B}\cdot\frac{M_{BC}C}{AC}\right)^2=\left(\frac{A_MC}{A_MM_{BC}}\cdot\frac{A_MM_{BC}}{A_MB}\right)^2=\left(\frac{A_YB}{A_YC}\right)^2$$
So
$$\frac{\overline{L_{BC}B}}{\overline{L_{BC}C}}=-\left(\frac{A_YB}{A_YC}\right)^2.$$
We conclude that $A_YL_{BC}$ is a symmedian line of triangle $A_YBC$. \end{proof}

\begin{theorem}
\label{theorem_tangent}
$\left(A_YM_{BC}A_{LX}\right)$ is tangent to $\left(BHC\right)$.
\end{theorem}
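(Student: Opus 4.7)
The plan has two parts: first to show $A_Y\in(BHC)$, then to show that the two circles share a common tangent at $A_Y$.

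Part~1 follows directly from Theorem~\ref{theorem_AAY}: since $BA_YCA_M$ is a parallelogram, opposite angles give $\angle BA_YC=\angle BA_MC=\pi-\angle BAC=\angle BHC$, so $A_Y$ lies on $(BHC)$.

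For Part~2 I would work with directed angles modulo $\pi$. The pivotal identity is
\[
\angle(A_YH,\,A_YA_{LX})\ \equiv\ \angle(BA_Y,\,BH)\pmod{\pi}.\qquad(\ast)
\]
Granted $(\ast)$, the tangency is a short chase: let $\tau$ be the tangent to $(BHC)$ at $A_Y$; tangent--chord with chord $A_YH$ (inscribed angle at $B$) gives $\angle(\tau,A_YH)\equiv\angle(BA_Y,BH)$, and combining with $(\ast)$ yields $\angle(\tau,A_YA_{LX})\equiv 2\angle(BA_Y,BH)\pmod{\pi}$. Since $M_{BC}A_Y=M_{BC}A_{LX}$ by Theorem~\ref{theorem_equal}, the triangle $M_{BC}A_YA_{LX}$ is isosceles; using the right angle $A_YH\perp A_YM_{BC}$ from Theorem~\ref{theorem_AY_median} together with $(\ast)$, its base angle at $A_Y$ is $\pi/2-\angle(BA_Y,BH)\pmod{\pi}$, so its apex $\angle(M_{BC}A_Y,M_{BC}A_{LX})\equiv 2\angle(BA_Y,BH)\pmod{\pi}$. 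This is precisely the inscribed angle on chord $A_YA_{LX}$ at $M_{BC}$, so $\tau$ satisfies the tangent--chord condition for $(A_YM_{BC}A_{LX})$ at $A_Y$, completing the tangency.

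To prove $(\ast)$, note that $A,H,A_Y,A_{LX}$ all lie on the circle of diameter $AH$: $A_Y$ and $A_{LX}$ are feet of perpendiculars from $H$ onto the median $AM_{BC}$ and the symmedian $AL$, respectively. The inscribed-angle theorem on this circle reduces $(\ast)$ to $\angle(AH,AA_{LX})\equiv\angle(BA_Y,BH)$; invoking the altitudes $AH\perp BC$ and $BH\perp CA$, each side loses a $\pi/2$, and $(\ast)$ becomes
\[
\angle(BC,\,AL)\ \equiv\ \angle(BA_Y,\,CA)\pmod{\pi}.\qquad(\diamond)
\]
Identity $(\diamond)$ can then be established by a four-step chain: (a) the parallelogram $BA_YCA_M$ of Theorem~\ref{theorem_AAY} gives $BA_Y\parallel CA_M$, so $\angle(BA_Y,CA)=\angle(CA_M,CA)$; (b) the inscribed-angle theorem in $(ABC)$ on chord $AA_M$ gives $\angle(CA_M,CA)=\angle(BA_M,BA)$; (c) tangent--chord at $A$ in $(ABC)$ converts $\angle(BA_M,BA)$ into the directed angle between the median $AM_{BC}$ and the tangent to $(ABC)$ at $A$; and (d) the classical isogonality at $A$ of the pair (median, symmedian), together with the antiparallel property that the tangent to $(ABC)$ at $A$ is the reflection of $BC$ across the $A$-bisector, turns this last angle into $\angle(BC,AL)$.

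The main technical obstacle is the chain in $(\diamond)$: the four classical ingredients (parallelogram, inscribed angle, tangent--chord, median/symmedian isogonality) fit together neatly in principle, but the directed-angle bookkeeping---signs, which inscribed angle at $B$ versus $C$, and the orientation flip under reflection across the angle bisector---must be handled with care for the composition to close modulo $\pi$.
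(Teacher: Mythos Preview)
Your proposal is correct, and the directed-angle identities $(\ast)$ and $(\diamond)$ both hold (I checked them); the isosceles step also closes cleanly once you note that in directed angles $(M_{BC}A_Y,M_{BC}A_{LX})=2(A_YM_{BC},A_YA_{LX})$ for any isosceles triangle with apex $M_{BC}$, so the $\pi/2$ from $A_YH\perp A_YM_{BC}$ doubles to $\pi\equiv 0$ and you are left with exactly $2(BA_Y,BH)$. So the ``main obstacle'' you flag is not really an obstacle.

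That said, your route is genuinely different from the paper's. The paper never computes the tangent line directly. Instead it brings in the extra point $L_{BC}=AL\cap BC$, shows via the circle on diameter $HL_{BC}$ that $L_{BC}$ lies on $(A_YM_{BC}A_{LX})$, and then invokes two earlier results: Theorem~\ref{theorem_symmedian}, which says $A_YL_{BC}$ is the $A_Y$-symmedian of $\triangle A_YBC$, and Lemma~\ref{lemma_tangent_circle}, which says that for two isogonal cevians $AD,AE$ the circles $(ADE)$ and $(ABC)$ are tangent at $A$. Applying the lemma to $\triangle A_YBC$ with the isogonal pair $A_YM_{BC}$ (median) and $A_YL_{BC}$ (symmedian) gives the tangency with $(A_YBC)=(BHC)$ in one stroke. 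Your argument is more self-contained---it needs neither $L_{BC}$ nor Theorem~\ref{theorem_symmedian}---but the paper's packaging into a reusable ``isogonal cevians $\Rightarrow$ tangent circles'' lemma explains \emph{why} the tangency happens and avoids the sign-tracking in your chain (a)--(d), which is really a disguised re-derivation of the median/symmedian isogonality that the paper isolates up front.
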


\begin{lemma}
\label{lemma_tangent_circle}
Given triangle $ABC$. On segment $BC$, we choose $D$, $E$ such that $AD$, $AE$ are isogonal conjugate of each other in this triangle. Prove that $\left(ABC\right)$ is tangent to $\left(ADE\right)$.
\end{lemma}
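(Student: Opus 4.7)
The plan is to exhibit a homothety centered at $A$ sending the circle $(ADE)$ onto $(ABC)$; two circles sharing a point and related by a homothety centered at that point automatically have coincident tangent lines there, and so are tangent. This reduces the problem to constructing such a homothety.

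First I let $D'$ and $E'$ denote the second intersections of the lines $AD$ and $AE$ with the circumcircle $(ABC)$. The isogonal hypothesis yields $\angle BAD' = \angle CAE'$, and reading these as inscribed angles in $(ABC)$ shows that the arcs $BD'$ and $CE'$ (both lying on the arc of $(ABC)$ not containing $A$) have equal measure. A short check with the angular positions of $B, D', E', C$ on $(ABC)$ shows that this arc equality is exactly the condition $D'E' \parallel BC$.

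Since $D, E$ lie on $BC$, it follows that $DE \parallel D'E'$. Applying the converse of Thales' theorem in triangle $AD'E'$ with the transversal $DE$, the ratios $\overline{AD}/\overline{AD'}$ and $\overline{AE}/\overline{AE'}$ agree, so there is a unique homothety $h$ centered at $A$ with $h(D) = D'$ and $h(E) = E'$. This $h$ sends the circle $(ADE)$ to a circle through $A$, $D'$, $E'$, which must be $(ABC)$, completing the argument.

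The main non-routine step is the passage from the isogonal condition to the parallelism $D'E' \parallel BC$; once that is in hand, the homothety and the consequent tangency are automatic. (Note that $D = E$ only when $AD$ is the $A$-bisector, in which case $(ADE)$ degenerates and the lemma should be interpreted as excluding this boundary case.)
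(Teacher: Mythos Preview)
Your argument is correct. The homothety idea is sound: extending $AD,AE$ to meet $(ABC)$ again at $D',E'$, the isogonal condition $\angle BAD=\angle CAE$ gives equal arcs $BD'$ and $CE'$ on the far side of $BC$, hence $D'E'\parallel BC$; Thales then produces a homothety at $A$ taking $(ADE)$ to $(AD'E')=(ABC)$, and two circles homothetic from a common point are tangent there. All steps check out, including the ratio equality $AD/AD'=AE/AE'$ since $D,E$ lie strictly between $A$ and $D',E'$.

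This is a genuinely different route from the paper. The paper argues directly with the tangent line: letting $AF$ be tangent to $(ABC)$ at $A$, one computes
\[
(AF,AD)=(AF,AB)+(AB,AD)=(CA,CB)+(EA,CA)=(EA,ED),
\]
using the tangent--chord angle for the first summand and the isogonal hypothesis for the second; the resulting equality is exactly the tangent--chord condition for $(ADE)$ at $A$, so the same line $AF$ is tangent to both circles. The paper's proof is a one-line angle chase and avoids introducing the auxiliary points $D',E'$. Your approach is slightly longer but more structural: it explains \emph{why} the tangency occurs by exhibiting the homothety, and in the process uncovers the pleasant auxiliary fact $D'E'\parallel BC$, which is of independent interest. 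Both proofs ultimately hinge on the same inscribed/tangent--chord angle relations, just packaged differently.
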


\begin{center}
\begin{figure}[htbp]
\includegraphics[scale=.5]{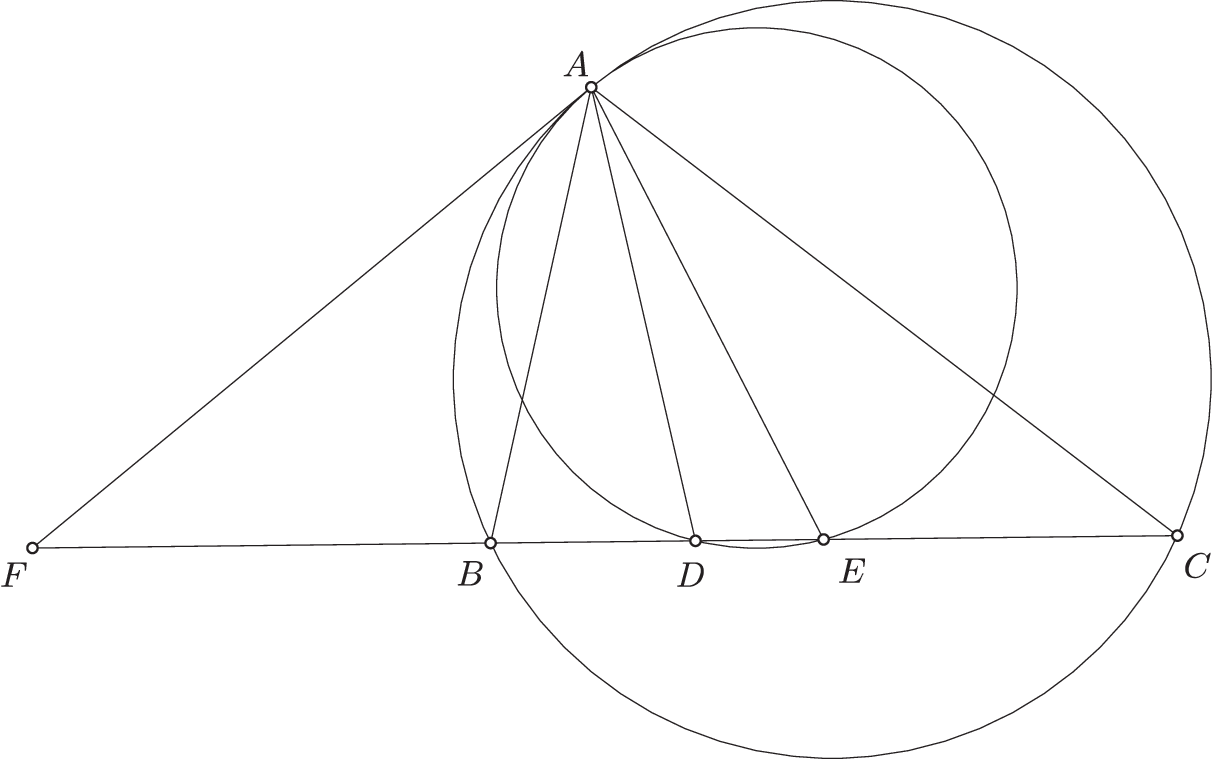}
\caption{Proof of Lemma \ref{lemma_tangent_circle}}
\end{figure}
\end{center}

\begin{proof}
The tangent line at $A$ of $\left(ABC\right)$ cuts line $BC$ at $F$. Then
$$(AF, AD)=(AF, AB)+(AB, AD)=(CA, CB)+(EA, CA)=(EA, CB)$$ 
so $AF$ is tangent to $\left(ADE\right)$. Thus, $\left(ABC\right)$ is tangent to $\left(ADE\right)$.
\end{proof}

Back to Theorem \ref{theorem_tangent},

\begin{center}
\begin{figure}[htbp]
\includegraphics[scale=.5]{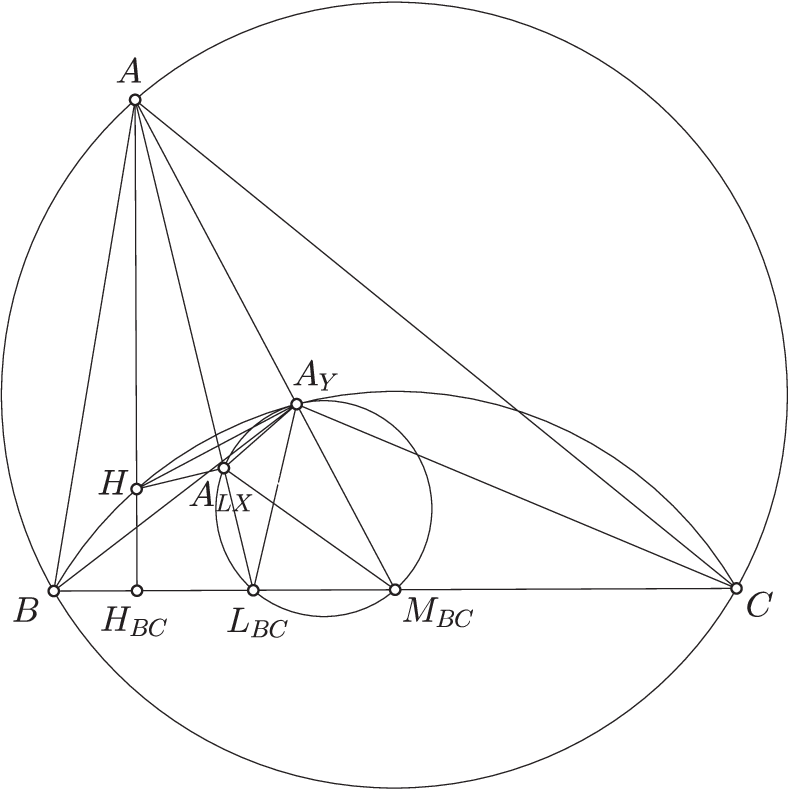}
\caption{Proof of Theorem \ref{theorem_tangent}}
\end{figure}
\end{center}

\begin{proof}
Draw the altitude $AH_{BC}$ of triangle $ABC$. We have that $H_{BC}$ and $A_{LX}$ lie on the circle with diameter $HL_{BC}$. Therefore,
$$(L_{BC}A_{LX}, L_{BC}M_{BC})=(HA_{LX}, HH_{BC})=(A_YA_{LX}, A_YA).$$
So $A_Y, A_{LX}, L_{BC}, M_{BC}$ are cyclic. Using Theorem \ref{theorem_symmedian}, $A_YL_{BC}$ is a symmedian line of triangle $A_YBC$. We use Lemma \ref{lemma_tangent_circle} to conclude that $\left(A_YM_{BC}A_{LX}\right)$ touches $\left(BHC\right)$.
\end{proof}

\begin{theorem}
\label{theorem_ABYACY_lie}
The rays $BA_Y$, $CA_Y$ cuts $\left(ABC\right)$ at $A_{BY}$, $A_{CY}$. Let $A'_{BY}, A'_{CY}$ be symmetry points of $A_{BY}, A_{CY}$ through $CA$, $AB$ respectively. Then $A'_{BY}$ and $A'_{CY}$ lie on $AL$.
\end{theorem}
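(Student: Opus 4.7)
The plan is to prove that the line $AA'_{BY}$ coincides with $AL$ (and similarly $AA'_{CY} = AL$) by a chain of directed-angle identities working modulo $\pi$. The idea is to start at $A'_{BY}$, use the reflection across $CA$ to trade the primed point for an inscribed-angle statement about $A_{BY}$, then push through $(ABC)$ to the line $BA_Y$, and finally use Corollary \ref{cor_AY_sym_AS} to transport the configuration over $BC$ to the symmedian chord $AA_S$, which by definition lies on $AL$.

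Concretely, I would first observe that since $A\in CA$, the line $AA'_{BY}$ is the reflection of $AA_{BY}$ across $AC$, giving $(AA'_{BY},AC)=-(AA_{BY},AC)$; so the statement $A'_{BY}\in AL$ is equivalent to the directed-angle equality $(AC,AA_{BY})=(AL,AC)$. Next, since $A$, $B$, $C$, $A_{BY}$ all lie on $(ABC)$ and $A_{BY}$ lies on the line $BA_Y$, the inscribed-angle theorem applied to the chord $CA_{BY}$ yields $(AC,AA_{BY})=(BC,BA_{BY})=(BC,BA_Y)$. Thus the proof is reduced to showing
\[
(BC,BA_Y)=(AL,AC).
\]

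The final step is where Corollary \ref{cor_AY_sym_AS} enters: since $A_Y$ is the reflection of $A_S$ across $BC$ and $B$ lies on $BC$, the line $BA_Y$ is the reflection of $BA_S$ across $BC$, so $(BC,BA_Y)=(BA_S,BC)$. A second application of the inscribed-angle theorem in $(ABC)$ (now to the chord $CA_S$, seen from $A$ and $B$) gives $(BA_S,BC)=(AA_S,AC)=(AL,AC)$, since $A_S$ lies on the $A$-symmedian by construction. Chaining these four equalities closes the argument and proves $A'_{BY}\in AL$.

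For $A'_{CY}$ I would run the mirror argument: the line $AA'_{CY}$ is the reflection of $AA_{CY}$ across $AB$, so the claim $A'_{CY}\in AL$ reduces to $(AB,AA_{CY})=(AB,AL)$; inscribed angles then transform $(AB,AA_{CY})$ into $(CB,CA_Y)$, Corollary \ref{cor_AY_sym_AS} (now applied at the point $C\in BC$) converts this into $(CA_S,CB)$, and one last inscribed-angle step at the chord $BA_S$ identifies it with $(AL,AB)$, completing the proof. The only real obstacle is keeping the directed angles consistent through the reflections; everything else is a routine sequence of inscribed-angle swaps, and the geometric content is packaged entirely inside Corollary \ref{cor_AY_sym_AS} together with the defining isogonal relation between the median and the symmedian at $A$.
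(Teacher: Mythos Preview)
Your argument is correct and follows essentially the same angle chase as the paper; the only cosmetic difference is that you route the key step through $A_S$ via Corollary~\ref{cor_AY_sym_AS}, whereas the paper routes it through $A_M$ via the parallelogram of Theorem~\ref{theorem_AAY} (using $BA_Y\parallel CA_M$), and these are equivalent. One small slip: in the $A'_{CY}$ case the reduced identity should read $(AB,AA_{CY})=(AL,AB)$, not $(AB,AL)$, and indeed your chain terminates at $(AL,AB)$, so the conclusion stands.
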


\begin{center}
\begin{figure}[htbp]
\includegraphics[scale=.5]{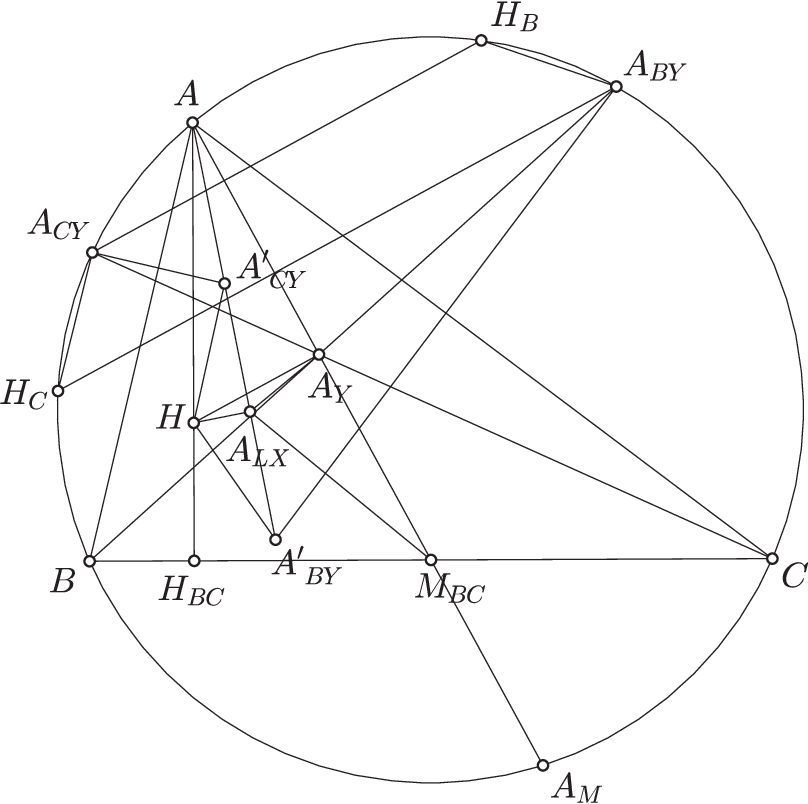}
\caption{Proof of Theorem \ref{theorem_ABYACY_lie}}
\end{figure}
\end{center}

\begin{proof}
We have
$$\begin{aligned} (AA'_{BY}, AC) & =(AC, AA_{BY})=(BC, BA_{BY}) \\ & =(CB, CA_M)=(AB, AA_M) \\ & =(AL, AC). \end{aligned}$$
Thus $AL$ passes $A'_{BY}$. Similarly, $AL$ passes $A'_{CY}$.
\end{proof}

\begin{theorem}
$A_{LX}$ is midpoint of $A'_{BY}A'_{CY}$.
\end{theorem}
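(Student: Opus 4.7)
The plan is to reduce the midpoint claim to a symmetry statement about $H$, and then to convert it into a chord-length equality inside $(ABC)$ via the two reflections of the orthocenter. Since Theorem~\ref{theorem_ABYACY_lie} places both $A'_{BY}$ and $A'_{CY}$ on the line $AL$, and $A_{LX}$ is by definition the foot of the perpendicular from $H$ onto $AL$, the statement ``$A_{LX}$ is the midpoint of $A'_{BY}A'_{CY}$'' is equivalent to the statement that $H$ lies on the perpendicular bisector of that segment, i.e.\ that $HA'_{BY}=HA'_{CY}$. So the whole problem boils down to proving this last equality.

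To prove it I would bring everything inside $(ABC)$. Let $H_B$ and $H_C$ denote the reflections of $H$ across $AC$ and $AB$ respectively; both are well-known to lie on $(ABC)$, namely as the second intersections of the altitudes from $B$ and $C$ with the circumcircle. Reflection across $AC$ carries the pair $(H,A'_{BY})$ to $(H_B,A_{BY})$ by the very definition of $A'_{BY}$, so $HA'_{BY}=H_BA_{BY}$; analogously $HA'_{CY}=H_CA_{CY}$. Hence it remains to prove that the two chords $H_BA_{BY}$ and $H_CA_{CY}$ of $(ABC)$ have the same length. Since $B$, $H$, $H_B$ are collinear on the altitude from $B$, the ray $BH_B$ coincides (up to sign) with the ray $BH$, and since $A_{BY}$ lies on the ray $BA_Y$, I get $\sin\angle H_BBA_{BY}=\sin\angle HBA_Y$, so that $H_BA_{BY}=2R\sin\angle HBA_Y$. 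Symmetrically, $H_CA_{CY}=2R\sin\angle HCA_Y$.

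The main obstacle, and the heart of the argument, is to show that $\sin\angle HBA_Y=\sin\angle HCA_Y$; for this I would prove that $B$, $H$, $C$, $A_Y$ are concyclic. Here Theorem~\ref{theorem_AAY} does the work: from the parallelogram $BA_YCA_M$ the opposite angles at $A_Y$ and $A_M$ are equal, so $\angle BA_YC=\angle BA_MC$. But $A_M\in(ABC)$ sits on the arc opposite $A$, giving $\angle BA_MC=\pi-\angle BAC$, and the classical orthocenter identity gives the same value $\angle BHC=\pi-\angle BAC$. Therefore $A_Y$ and $H$ subtend $BC$ under the same angle on the same side of $BC$, so $B$, $H$, $C$, $A_Y$ lie on a common circle (which is in fact the reflection of $(ABC)$ across $BC$). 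Inside that circle, $\angle HBA_Y$ and $\angle HCA_Y$ are inscribed angles subtending the chord $HA_Y$, hence they are either equal or supplementary; in either case their sines agree. This yields $H_BA_{BY}=H_CA_{CY}$, and the midpoint claim follows from the first paragraph.
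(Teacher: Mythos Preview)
Your proof is correct and shares the same overall architecture as the paper's: reduce the midpoint claim to $HA'_{BY}=HA'_{CY}$ via the foot-of-perpendicular observation, then transfer this to the chord equality $H_BA_{BY}=H_CA_{CY}$ in $(ABC)$ using the reflections across $AC$ and $AB$.

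The one genuine difference is in how the chord equality is established. The paper shows, by a directed-angle chase, that $H_BA_{CY}\parallel HA_Y$ and $H_CA_{BY}\parallel HA_Y$; hence $H_BA_{CY}H_CA_{BY}$ is an isosceles trapezoid inscribed in $(ABC)$, and its legs $H_BA_{BY}$ and $H_CA_{CY}$ are equal. You instead write each chord as $2R$ times the sine of the inscribed angle at $B$ (resp.\ $C$), reducing to $\sin\angle HBA_Y=\sin\angle HCA_Y$, and then invoke the concyclicity $B,H,C,A_Y$ (deduced from $\angle BA_YC=\angle BA_MC=\pi-\angle BAC=\angle BHC$ via Theorem~\ref{theorem_AAY}). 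Your route has the pleasant side effect of exhibiting that $A_Y\in(BHC)$, which is exactly Corollary~\ref{cor_AY_sym_AS} in disguise; the paper's route instead reveals the isosceles trapezoid sitting inside $(ABC)$. Both arguments are short and of comparable strength.
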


\begin{proof}
Rays $BH$ and $CH$ cut $\left(ABC\right)$ respectively at $H_B$ and $H_C$. We have
$$(H_BA_{CY}, H_BB)=(CA_{CY}, CB)=(HA_Y, HH_B)$$
so $H_BA_{CY} \parallel HA_Y$, where $HA_Y \perp AG$. Simiarly $H_CA_{BY} \parallel HA_Y$, where $HA_Y \perp AG$ hence $H_BA_{CY}H_CA_{BY}$ is a isosceles trapezoid. We conclude that $H_CA_{CY}=H_BA_{BY}$. Using symmetry transformation through $AB$ and $AC$, we have $\mathbf{S}_{AB}\colon H, A_{CY} \mapsto H_C, A'_{CY}$ and $\mathbf{S}_{AC}\colon H, A_{BY} \mapsto H_B, A'_{BY}$. Therefore $HA'_{BY}=H_BA_{BY}=H_CA_{CY}=HA'_{CY}$, which means triangle $HA'_{BY}A'_{CY}$ is isosceles at $H$. Moreover, $A_{LX}$ is the projection of $H$ on $A'_{BY}A'_{CY}$ so $A_{LX}$ is midpoint of $A'_{BY}A'_{CY}$.
\end{proof}

\section{Conclusion}
This article is a discussion on the properties associated with a pair of distinct triangle centers. Further investigations can be conducted in this domain to delve deeper into the subject matter and uncover additional distinctive characteristics pertaining to triangles.

\bigskip

\bigskip

\bigskip

FACULTY OF MATHEMATICS AND COMPUTER SCIENCE, UNIVERSITY OF SCIENCE, VIETNAM NATIONAL UNIVERSITY HO CHI MINH CITY, 227 NGUYEN VAN CU, WARD 4, DISTRICT 5, HO CHI MINH CITY

\textit{E-mail address}: \texttt{21c29018@student.hcmus.edu.vn}

\begin{thebibliography}{9}

\bibitem{usatst2008} AoPS site Admins, \emph{USA Team Selection Test 2008}, available at \url{https://artofproblemsolving.com/community/c4638}.
\bibitem{elmoshortlist2013} Allen Liu, \emph{ELMO Shortlist 2013}, Problem G3, available at \url{https://web.evanchen.cc/exams/ELMO-2013-SL.pdf}.
\bibitem{etc} Clark Kimberling, \emph{Encyclopedia Of Triangle Centers}, available at \url{http://faculty.evansville.edu/ck6/encyclopedia/ETC.html}
\bibitem{fix01} IMO Geometry Blogspot, \emph{IMO 1959 - 2021 116p}, available at \url{https://imogeometry.blogspot.com/p/1.html}.
\bibitem{hagge} Jay Warendorff, \emph{Hagge circle}, Wolfram Demonstrations Project, available at \url{https://demonstrations.wolfram.com/TheHaggeCircle}.
\bibitem{jaymebrocard} Jean-Louis Ayme, \emph{Brocard points properties}, available at \url{http://jl.ayme.pagesperso-orange.fr/vol7.html}.

\bibitem{jaymehagge} Jean-Louis Ayme, \emph{Hagge P-circle}, available at \url{http://jl.ayme.pagesperso-orange.fr/vol5.html}.
\bibitem{jaymesix} Jean-Louis Ayme, \emph{A circle through six points problem}, available at \url{http://jl.ayme.pagesperso-orange.fr/vol9.html}.
\bibitem{brocardmidpoint} Jean-Louis Ayme, \emph{Brocard and Hagge circle}, available at \url{https://artofproblemsolving.com/community/c6h320075p1719735}.
\bibitem{nvlfix} Nguyen Van Linh, \emph{Circle passes through fixed points}, Nguyen Van Linh's Wordpress, available at \url{https://nguyenvanlinh.files.wordpress.com/2015/04/circles-pass-through-fixed-point.pdf}.
\bibitem{haggeprop} TelvCohl, \emph{Some properties of Hagge circle}, available at \url{https://artofproblemsolving.com/community/c6h624925}.
\bibitem{1stbrocard} Weisstein, Eric W., \emph{First Brocard point}, From MathWorld-A Wolfram Web Resource, available at \url{https://mathworld.wolfram.com/FirstBrocardPoint.html}.
\bibitem{2ndbrocard} Weisstein, Eric W., \emph{Second Brocard point}, From MathWorld-A Wolfram Web Resource, available at \url{http://mathworld.wolfram.com/SecondBrocardPoint.html}.
\bibitem{brocardcircle} Weisstein, Eric W., \emph{Brocard circle}, From MathWorld-A Wolfram Web Resource, available at \url{http://mathworld.wolfram.com/SecondBrocardPoint.html}.
\bibitem{orthocentroidal} Weisstein, Eric W., \emph{Orthocentroidal circle}, From MathWorld-A Wolfram Web Resource, available at \url{http://mathworld.wolfram.com/SecondBrocardPoint.html}.
\bibitem{lemoine} Weisstein, Eric W., \emph{Lemoine point}, From MathWorld-A Wolfram Web Resource, available at \url{https://mathworld.wolfram.com/LemoinePoint.html}.
\end{thebibliography}
\end{document}